\newtheorem{theorem}{Theorem}[section]
\newtheorem*{Lemma4A}{Lemma 4A}
\newtheorem*{Lemma4B}{Lemma 4B}
\theoremstyle{definition}
\theoremstyle{remark}
\numberwithin{equation}{section}
\DeclareMathOperator*{\lcm}{lcm}
\begin{document}

\title[Carmichael relations: three-prime Carmichael numbers up to $10^{24}$]{Carmichael number variable relations:\\ three-prime Carmichael numbers up to $10^{24}$}

\author{J.M.Chick}
\address{10 Postwood Green, Hertford Heath, Hertfordshire  SG13 7QJ, UK}


\keywords{Carmichael numbers}

\subjclass [2000] {Primary 11Y11; Secondary 11Y55}

\begin{abstract}
Bounds and other relations involving variables connected with Carmichael numbers are reviewed and extended.
Families of numbers or individual numbers attaining or approaching these bounds are given. A new algorithm for finding three-prime Carmichael numbers is described, with its implementation 
up to $10^{24}$.
Statistics relevant to the distribution of three-prime Carmichael numbers are given, with particular reference
to the conjecture of Granville and Pomerance in \cite{Granville}.
\end{abstract}

\maketitle

\section{Introduction}\label{Sec1}
A Carmichael number $n$ is defined by the property that $n$ is composite and ${a^n \equiv a \pmod n}$ for all $a$.
For $n = \prod^d_{i=1}{p_i}^{\alpha_i}$ , with $p_i$ prime, Korselt in~1899~\cite{Korselt} stated that $\alpha_i = 1$
for all $i$ and $\lcm(p_1-1, p_2-1,\dotsc ,p_d-1)$ divides $(n-1)$ is a necessary and sufficient condition for
$n$ to divide $(a^n-a)$, but he did not exhibit any such number $n$. 
In~1910 Carmichael~\cite{Carmichael2} showed that the above condition required $d \ge 3$ and all $p_i$ to be odd, 
and gave four such numbers, the smallest of which was $561 = 3 \cdot 11 \cdot 17.$ 
In~1912~\cite{Carmichael3} he amplified his remarks and extended his list to fifteen such numbers,
including one with $d=4$ (although very curiously he reconsidered and rejected 561!)

Korselt's criterion, stated above, is the basis for much of the theory on Carmichael numbers and for algorithms to find them,
including ours. For a background on Carmichael numbers and previous counts of Carmichaels
 up to increasing upper bounds see Ribenboim~\cite{Ribenboim4}, counts which have now culminated in Richard~Pinch's 
 up to $10^{18}$ \cite{Pinch10}. 
Our list up to $10^{24}$ for $d=3$
 may be found on the website of the Cambridge University Department of Pure Mathematics and Mathematical Statistics~\cite{Camftp}.

In addition to $p\,_i$ and $n$ already mentioned, Korselt's criterion spawns numerous other variables, some of them specific to $d=3$,
 and various relationships and bounds connecting them are of value in constructing algorithms to find Carmichael numbers as well as 
being of interest in themselves. 
In the next three sections we review and extend such relationships and bounds.
\section{Notation; variables; Korselt factorisations, numbers and families}\label{Sec2}

\subsection{Notation, KN's and K-families}\label{Sec2a}
Because of the form of the Korselt criterion, we shall find it convenient 
consistently and exclusively to use the abbreviation $x': = x-1$.
So we have $(xy)' = xy'+x'=x'y'+x'+y'$, etc. We shall also consistently use the notation which we define during \S\ref{Sec2},
 without repeated explanation. 
 
Let $n= \prod^d_{i=1} p_i$, where $1<p_1<p_2<\ldots<p_d$ and $d \ge3$, be a number for which the factors $p_i$ satisfy the condition
 $p_i{}'$ divides $n'$, and define $P:=\prod^{d-2}_{i=1}p_i$, $p:=p_1$, $q:=p_{d-1}$ and $r:=p_d$,
 so for $d=3$, $n=pqr$. Let $P_i:=n/{p_i}$,
 so $n=p_iP_i$ for $1 \le i\le d$, and similarly write $n=qQ=rR$.

Then $n'=(p_iP_i)'=p_i{}'P_i+P_i{}'$, so $n'\equiv 0\pmod {p_i{}'}$ 
gives $P_i{}' \equiv 0 \pmod {p_i{}'}$ for $1 \le i \le d$, and conversely.
 
Thus there exist integers $\lambda_i$ such that $P_i{}'=\lambda_ip_i{}'$.
Also if $\lambda_d=1$, then obviously $p_d=P_d=\prod^{d-1}_{i=1} p_i$ is {\it necessarily} composite
 (usually we shall assume that $\lambda_d \ge2$).
We write
 $D:=\lambda_{d-1}\text{  ,}\quad E:=\lambda_d\text{ ,}$
 so \begin{equation}Q\,'=Dq\/'\quad\text{and}\quad R\,'=Er'\end{equation}\label{B1}

\vskip-12pt
So far in substance but not in notation we follow Carmichael, if $p_i$ are all odd primes.
But both for algorithms and theoretical results it is often necessary to consider sets of numbers $n$ with a factorisation
 which satisfies the Korselt divisibility criterion without all (or any) of the factors necessarily being prime:
 such a number, $n$ as above, together with the particular Korselt factorisation, we shall term a {\it Korselt number}
 (abbreviated to $KN$, or $K_dN$ if its Korselt factorisation has $d$ factors) if, for all $i$, $p_i$ is odd and $E\ge2$.
I have not established whether it is possible for a number to be a $K_3N$ in more than one way, but the Korselt factorisation will
 always be apparent from the context. Clearly if every $p_i$ is a prime then $n$ is a Carmichael number, 
which we shall abbreviate to $CN$ or $C_dN$ in like manner.
We shall also consider certain families of $KN$s ($K$-families or $K_d$-families) 
of the type $n(t)=\prod^d_{i=1}p_i(t)$, where $n$ and $p_i$
 are polynomials over the integers and $t$ is a non-negative integer parameter. 
It seems plausible to conjecture (with Schinzel,
 see page~91 of~\cite{Ribenboim4}) that any $K$-family will contain an infinite number of $CN$'s unless, speaking loosely,
 there is some {\it obvious} reason why (almost) all members have at least one composite $p_i$.


\subsection{Chernick's universal forms}\label{Sec2b}
The best known $K$-families are the ``universal forms'' described by Chernick in~1939~\cite{Chernick5},
and it will be helpful to summarise his theory in our notation.
Let $n$ be any $KN$.
Then we have 
\begin{equation}n'=\prod^d_{i=1}(p_i{}'+1)-1=\prod^d_{i=1}p_i{}'
+\sum (p_1{}'p\,_2{}'\dotsm p\,_{d-1}{}')+\ldots+\sum p_1{}'p\,_2{}'+\sum p_1{}'
\label{B2}\end{equation}
Let $H=\underset{i\neq j}{\gcd}{[\{p_i{}'\}]}$ for any particular $j$. 
Then from (\ref{B2}) ${n'\equiv p_j{}'\pmod H}$.
But since $n$ is a $KN$, clearly for $i \neq j$ we have $H|p_i{}'|n'$, so ${n'\equiv 0 \pmod H}$.
Hence $p_j{}'\equiv0\pmod H$, whence $H=\underset{1\leq i \leq d}{\gcd}[\{p_i{}'\}]$.
So if $A_i:=p_i{}'/H$, any set of $(d-1)A_i${}'s are coprime, i.e.
\begin{equation}\underset{i\neq j}{\gcd}[\{A_i\}]=1\text{ ,  for}\quad 1\leq j\leq d \label{B3}\end{equation}
$\text{Also if   }L:=\underset{1\leq i\leq d}{\lcm}[\{p_i{}'\}]\quad\text{and}\quad
\ell:=\underset{1\leq i\leq d}{\lcm}[\{A_i\}]$, clearly $L=\ell H$.
Combining $p_i{}'=A_iH$, $L=\ell H$ and $n'\equiv 0\pmod L$ with (\ref{B2}), we get
\begin{equation}\begin{split}H^{d-2}\sum A_1A_2\dotsm A_{d-1}+H^{d-3}\sum A_1A_2\dotsm A_{d-2}\\
+\dotsb+H\sum A_1A_2+\sum A_1\equiv0\pmod\ell\label{B4}\end{split}\end{equation}
Suppose now that we are given any set $\{A_1,A_2,\dotsc,A_d\}$ satisfying (\ref{B3}),
then congruence (\ref{B4}) is always soluble for $H$ when $d=3$ (see below, following (\ref{B5})), 
but not necessarily when $d>3$. 
If $H_o$ is any solution,
then so is $H_t=H_o+t\ell$, so we can choose $H_o$ to satisfy $1\leq H_o\leq\ell$.
Then if we take \hbox{$p_i=A_iH_t+1=A_i\ell t+A_iH_o+1$} and $n=\prod_{i=1}^d p_i$, $n$ satisfies the Korselt criterion for all $t$,
and with certain precautions yields a $K_d$-family corresponding to each basic solution $H_o$ (precautions: 
our definition of a $KN$ requires
(i) all $p_i$ are odd, and (ii) $E\geq2$: for (i), if $\ell$ is even and $H$ is odd, from (\ref{B4}) we get
\begin{equation*}\sum(A_1A_2\dotsm A_{d-1})+\dotsb+\sum A_1A_2+\sum A_1
=\prod_{i=1}^d(A_i+1)-\prod_{i=1}^d A_i-1\equiv0\pmod2\text{ ,}\end{equation*}
whence since $\ell\medspace|\prod_{i=1}^d A_i$ and $\ell$ is even, 
$\prod^d_{i=1}(A_i+1)\equiv1\pmod2$ and so for all $i$, $A_i\equiv0\pmod2$,
contradicting (\ref{B3}) which holds by hypothesis; so if $\ell$ is even then $H$ is even and all $p_i$ are odd; 
but $\ell$ is odd if{}f all $A_i$ are odd, so $H_t$ is
 \hbox{alternately} odd or even as $t$ increases, and then the $K$-family will be generated
by the parameter $u$ where $t=2u$ or $2u+1$ according as $H_o$ is even or odd; while for (ii), 
$E\geq2$, it may be necessary to exclude $t=0$ from the family).
These $K$-families are Chernick's universal forms, of which the best known arises from \hbox{$(A_1,A_2,A_3)=(1,2,3)$} with $H_o=6$
and then as above $n=(6t+7)(12t+13)(18t+19)$ (Chernick equivalently has $(6M+1)(12M+1)(18M+1))$.

For $d=3$, let $A:=A_1$, $B:=A_2$, $C:=A_3$.
Then, for any $K_3N$, from (\ref{B3}) $A$, $B$, $C$ are pairwise coprime, $\ell=ABC$, and so from (\ref{B4})
there exists a positive integer $F$ such that
\begin{subequations}\label{B5}
\begin{equation}H(AB+AC+BC)+A+B+C=FABC,\qquad\text{i.e.}\label{B5a}
\end{equation}
\begin{equation}
F=H\Bigl(\frac{1}{A}+\frac{1}{B}+\frac{1}{C}\Bigr)+\frac{1}{AB}+\frac{1}{AC}+\frac{1}{BC}
\quad\text{is a positive integer.}\label{B5b}\end{equation}
\end{subequations}

Also given any pairwise coprime $A$, $B$, $C$, then $\sum AB$ and $ABC$ are coprime, so 
(2.5a) has a unique solution
for $F$ and $H_o$, and we get a $K_3$-family as described above.
Putting $p_i{}'=A_iH$ in 
(2.2) with $d=3$, with 
(2.5a) we have \begin{equation}\label{B6}n'=ABCH(H^2+F)\end{equation}
\begin{theorem}\label{Th2_1}
For any $K_3N$, $1\leq F\leq2H$.
\end{theorem}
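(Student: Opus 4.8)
The lower bound is immediate: $F$ is asserted to be a positive integer in (\ref{B5b}), so $F\ge1$ with nothing to prove. For the upper bound the plan is to read everything off the closed form (\ref{B5b}),
\[
F=H\Bigl(\frac{1}{A}+\frac{1}{B}+\frac{1}{C}\Bigr)+\frac{1}{AB}+\frac{1}{AC}+\frac{1}{BC},
\]
together with the elementary size information on $A,B,C$. Since $p_1<p_2<p_3$ and $p_i{}'=A_iH$ with $H=\gcd[\{p_i{}'\}]$, the quotients satisfy $1\le A<B<C$; being distinct positive integers they obey $A\ge1$, $B\ge2$, $C\ge3$, and that is all I will use (no appeal to the pairwise coprimality of $A,B,C$ is needed for this particular theorem).

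First I would bound the two bracketed sums in (\ref{B5b}): from $A\ge1,\ B\ge2,\ C\ge3$,
\[
\frac{1}{A}+\frac{1}{B}+\frac{1}{C}\le1+\tfrac12+\tfrac13=\tfrac{11}{6}<2,\qquad
\frac{1}{AB}+\frac{1}{AC}+\frac{1}{BC}\le\tfrac12+\tfrac13+\tfrac16=1 .
\]
Hence $F\le\tfrac{11}{6}H+1<2H+1$, the strict inequality holding because $H\ge1$. Since $F$ is an integer, $F<2H+1$ forces $F\le2H$, which is the claim.

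The one place that needs a moment's care — the step I would flag as the "obstacle", mild though it is — is this last integrality passage: the crude estimate $F\le\tfrac{11}{6}H+1$ does \emph{not} on its own yield $F\le2H$ when $H<6$, so one genuinely has to combine it with $F\in\mathbb Z$; the point is that the discarded term $\tfrac16H$ grows with $H$ while the additive slack stays bounded by $1$, so the distance to $2H+1$ never closes. An equivalent route, which I would also record, is to argue by contradiction from (\ref{B5a}): if $F\ge2H+1$ then $(2H+1)ABC\le H(AB+AC+BC)+(A+B+C)$, i.e.\ $H\bigl(2ABC-AB-AC-BC\bigr)\le(A+B+C)-ABC$, and one checks that for $1\le A<B<C$ the left-hand side is a positive multiple of $H$ whereas the right-hand side is $\le0$, a contradiction. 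I do not expect a real difficulty in either version; the main discipline is to invoke (\ref{B5b})/(\ref{B5a}) directly rather than wrestling with $n'$ through (\ref{B6}), and to keep straight exactly which inequalities among $A,B,C$ are legitimately available.
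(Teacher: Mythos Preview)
Your proof is correct, and it is genuinely more economical than the paper's. Both arguments start from (\ref{B5b}), but the paper splits into cases on $B$: for $B\geq3$ it bounds $F\leq F(1,3,4,H)=\tfrac{19H}{12}+\tfrac{2}{3}<2H$ directly (using $H\geq2$), while for $B=2$ it must further split on $C$ and, in the delicate sub-case $(A,B,C)\in\{(1,2,3),(1,2,5)\}$, invoke the Chernick theory to see that $H_o=6$, hence $H\geq6$, whence $\tfrac{11H}{6}+1\leq2H$. Your route sidesteps all of this: from $A\geq1$, $B\geq2$, $C\geq3$ you get $F\leq\tfrac{11}{6}H+1<2H+1$ uniformly, and integrality of $F$ finishes. (Your aside that the estimate ``does not on its own yield $F\le2H$ when $H<6$'' is slightly overstated: once you pass through $2H+1$ and use $F\in\mathbb{Z}$, no separate small-$H$ check is needed, as you then correctly observe.) The one thing the paper's finer analysis buys is the identification of the unique equality case $F=2H$ at $n=7\cdot13\cdot19=1729$; your argument gives the inequality but not this sharpness, though the theorem as stated does not ask for it.
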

\begin{proof}
Based on 
(2.5b), write $F=F(A,B,C,H)$. We have $H\geq 2$ and $B\geq 2$. Obviously $F\geq 1$.
We consider two cases: (a) $B\geq 3$, (b) $B=2$.

(a) For $B\geq 3, F(A,B,C,H)\leq F(1,3,4,H)=\dfrac{19H}{12}+\dfrac{2}{3}< 2H\text{  for  }H\geq 2\text{   ,}$

(b) B=2. For $C\geq 7 \text{ we have } F(1,2,C,H)\leq F(1,2,7,H)=\dfrac{23H}{14}+\dfrac{5}{7}\leq 2H \text{ for } H\geq 2$.
Also \begin{equation*}F(1,2,5,H)<F(1,2,3,H)=\dfrac{11H}{6}+1\leq 2H\text{  for  }H\geq 6\\.\end{equation*}
But for $(A,B,C)=(1,2,3) \text{ or } (1,2,5) \text{ we have } H_0=6$, whence the result, with $F=2H$ only for $n=7 \cdot 13 \cdot 19=1729$.
\end{proof}

Note: $(A,B,C,H)=(1,2,7,2) \text{ yields } n=3 \cdot 5 \cdot 15 =225$, not a $K_3N$ since $E=1$.


\subsection{The equation(s) of Beeger, Duparc and Pinch.}\label{Sec2c}
For any $KN$ we have $n=Pqr=qQ=rR$, and hence $Q=Pr$ and $R=Pq$.\newline
Then from (\ref{B1}) $Dq\,'=Q\,'=(Pr)'=Pr'+P\,'$,
so \begin{equation}\label{B7}Dq\/'-Pr'=P\,'\text{ , and similarly  } Er'-Pq\/'=P\,'\\.\end{equation}
Solving simultaneously for $q\/'$ and $r'$, and writing
\begin{equation}\label{B8}\Delta:=DE-P\,^2,\qquad\text{we get}\end{equation}
\vskip-12pt
\begin{subequations}\label{B9}
\begin{equation}\label{B9a}q\/'=\frac{P\,'(P+E)}{\Delta}\qquad\text{and}
\end{equation}
\begin{equation}\label{B9b}r'=\frac{P\,'(P+D)}{\Delta}\\.
\end{equation}
\end{subequations}
Beeger for $d=3$ in~1950~\cite{Beeger7} and Duparc for $d\geq3$ in~1952~\cite{Duparc8} gave~(\ref{B9a}),
and Pinch~\cite{Pinch} bases his first algorithm on~(\ref{B9}).
From~(\ref{B9}), $\Delta\geq1$.
Also Duparc showed
\begin{theorem}\label{Th2_2}For any $KN$, $2\leq E\leq P-1$.\end{theorem}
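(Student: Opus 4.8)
The plan is to extract the bound $2 \le E \le P-1$ directly from the formulae \eqref{B9} together with the positivity facts already established. The lower bound is immediate: $E = \lambda_d$, and the definition of a $KN$ stipulates $E \ge 2$ (this is condition (ii) in the definition), so nothing needs proving there beyond citing the definition. Hence the whole content of the statement is the upper bound $E \le P-1$.

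For the upper bound I would argue as follows. Since $n$ is a $KN$ we have $1 < p_1 < \dots < p_d$ with $p_{d-1} = q < r = p_d$, so certainly $q' < r'$, i.e. $q' \le r' - 1 < r'$. Now divide \eqref{B9a} by \eqref{B9b}: because both numerators share the factor $P'(>0)$ and both share the denominator $\Delta$, which we already know satisfies $\Delta \ge 1 > 0$, we get
\begin{equation*}
\frac{q'}{r'} = \frac{P+E}{P+D}.
\end{equation*}
Since $q' < r'$ and $r' > 0$, this forces $P + E < P + D$, hence $E < D$, i.e. $E \le D-1$. That alone is not quite the claimed bound, so the second ingredient is a bound on $D$ in terms of $P$. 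Here I would use $D = \lambda_{d-1}$ with $Q' = D q'$ and the fact that $Q = n/q = P r$ has $Q' = Pr' + P'$ (equation \eqref{B7}); combined with $q < r$ this should give $D = (Pr' + P')/q' < (Pq' + P')/q' = P + P'/q'$. Since $P' / q' = (P-1)/(q-1)$, and one needs $p < q$ (true since $P \ge p = p_1$ and the primes are strictly increasing, so $P-1 < q-1$ when $d=3$; for $d>3$, $P = p_1\cdots p_{d-2}$ could exceed $q$, so care is needed), one gets $P'/q' < 1$ when $P < q$, forcing $D \le P$, and then $E \le D - 1 \le P - 1$.

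The main obstacle I anticipate is precisely the case $d > 3$, where $P = \prod_{i=1}^{d-2} p_i$ need not be smaller than $q = p_{d-1}$, so the naive estimate $P'/q' < 1$ can fail and the clean chain $E \le D-1 \le P-1$ breaks. The resolution is probably to go back to \eqref{B9a} directly: since $q' \ge 1$ and $\Delta \ge 1$, \eqref{B9a} gives $P'(P+E) = q' \Delta \ge \Delta = DE - P^2$, and one must then play off $q' = p_{d-1} - 1$ being genuinely large (it exceeds each $p_i - 1$ for $i \le d-2$, hence exceeds $P^{1/(d-2)} - 1$-type quantities) against the product structure of $P$. Alternatively, and more cleanly, one can note $q' \mid n'$ and $n' = ABCH(\dots)$-type relations do not directly apply for $d>3$, so the honest route is: from $q < r$ and \eqref{B9} deduce $E < D$; from \eqref{B7} rewrite $D q' = P r' + P'$ and use $r' \le$ (something) — but the sharp statement $E \le P - 1$ for all $d \ge 3$ will ultimately rest on showing $q' > P'$, equivalently $q > P$... which is false in general. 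I therefore expect Duparc's actual argument to treat $q'$ as the free variable determined by \eqref{B9a} and to bound $E$ by observing that $q'$ must be a positive integer strictly between consecutive values, squeezing $E$ into $[2, P-1]$; reconstructing that squeeze for general $d$ is the step I'd budget the most time for, while the $d=3$ case should fall out in a few lines from $q'/r' = (P+E)/(P+D)$ and $\Delta \ge 1$.
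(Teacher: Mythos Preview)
Your proposed chain $E \le D-1 \le P-1$ breaks at the second step, and not just for $d>3$: in fact $D>P$ for \emph{every} $KN$. The slip is in the line
\[
D=\frac{Pr'+P'}{q'}<\frac{Pq'+P'}{q'}.
\]
Since $q<r$ gives $q'<r'$, the inequality goes the other way: $Pr'+P'>Pq'+P'$, so $D>P+P'/q'>P$. (This is consistent with the paper's equation~\eqref{B10}, $D=P+s+\eta$ with $s\ge1$ and $\eta\ge1$, and with the example $561=3\cdot11\cdot17$, where $P=3$, $D=5$, $E=2$.) So while your derivation of $E<D$ is correct, it cannot be parlayed into $E\le P-1$ via a bound on $D$.

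The paper's (i.e.\ Duparc's) argument uses the \emph{other} equation in~\eqref{B7}, namely $Er'=Pq'+P'$, and works directly:
\[
E=\frac{Pq'+P'}{r'}=\frac{Pr'-P(r'-q')+P'}{r'}=P-\frac{P(r-q-1)+1}{r'}.
\]
Since all $p_i$ are odd, $r-q\ge 2$, so the numerator $P(r-q-1)+1\ge P+1>0$ and $E<P$. This is a two-line computation valid for all $d\ge3$, with no case analysis and no need to compare $P$ with $q$.
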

\begin{proof}
From the definition of a $KN$, $E\geq 2$. Also $r-q-1\geq 1$. Hence from~(\ref{B7})
\begin{equation*}E=\frac{Pq\/'+P\,'}{r'}=\frac{Pr'-P(r'-q\/')+P\,'}{r'}=P-\frac{P(r-q-1)+1}{r'}<P\text{ ,}\end{equation*}
whence the result.\end{proof}

The following equations based on~(\ref{B8}) and~(\ref{B9}) will also be useful in~\S3 and \S4.
Define $s:=P-E$ and $\eta:=D-P-s$, so
\begin{equation}\label{B10}E=P-s\text{  and  }D=P+s+\eta\end{equation}
Then from Theorem \ref{Th2_2}, $1\leq s\leq P-2$, and from~(\ref{B8}) we have
\begin{equation}\Delta=\eta(P-s)-s^2\text{ ,  whence    }\eta=\frac{\Delta+s^2}{P-s}=\frac{\Delta+s^2}{E}
\text{  and so  }\eta\geq 1\label{B11}\end{equation}
Hence
\begin{equation}\label{B12}s^2+\eta s=\eta P-\Delta\text{,}\quad\text{so}\quad\Delta<\eta P\text{,}\quad\text{and}\quad
s=\sqrt{\eta P - \Delta+\frac{\eta^2}{4}}-\frac{\eta}{2}\end{equation}
Hence if
\begin{subequations}\label{B13}
\begin{equation}\label{B13a}\theta:=\sqrt{4(\eta P-\Delta)+\eta^2}\text{ ,}\qquad\text{then}\end{equation}
\begin{equation}\label{B13b}s=\frac{\theta-\eta}{2}\text{ ,}\quad E=P-\frac{\theta-\eta}{2}
\text{ ,}\quad D=P+\frac{\theta+\eta}{2}\end{equation}
\end{subequations}
Then~(\ref{B9}) becomes
\begin{subequations}\label{B14}
\begin{equation}\label{B14a}q=\frac{P\,'}{\Delta}\Bigl(2P+\frac{\eta-\theta}{2}\Bigr)+1\text{ ,}\end{equation}
\begin{equation}\label{B14b}r=\frac{P\,'}{\Delta}\Bigl(2P+\frac{\eta+\theta}{2}\Bigr)+1\text{ ,}\end{equation}
\end{subequations}
\begin{equation*}\shoveleft{\text{whence}\quad qr=\frac{P'{\,}^2}{\Delta^2}\biggl\{\Bigl(2P+\frac{\eta}{2}\Bigr)^2
-\frac{\theta{\,}^2}{4}\biggr\}+\frac{P\,'}{\Delta}(4P+\eta)+1}\end{equation*}
and using~(\ref{B13a}) for $\theta^2$ we get
\begin{equation}\label{B15}n=Pqr=P\Bigl\{\frac{P'{\,}^2}{\Delta^2}(4P\,^2+\eta P+\Delta)+\frac{P\,'}{\Delta}(4P+\eta)+1\Bigr\}\end{equation}

Subject to the solubility of certain congruences, a particular choice of $\Delta$ and $\eta$ leads to one or more $K_3$-families
via~(\ref{B12},~\ref{B13},~\ref{B14}), using~(\ref{B12}) with $s$ as a first parameter.


\subsection{Relations connecting $K_3N$ variables}\label{Sec2d}
These will be needed in \S4.

\begin{equation*}\text{We have}\quad E=\frac{R\,'}{r'}=\frac{(pq)'}{r'}
=\frac{p\,'q\,'+p\,'+q\,'}{r'}=\frac{ABH+A+B}{C}\text{ ,}\end{equation*}
since $p_i\,'=A_iH$, and hence
\begin{equation}\label{B16}C=\frac{ABH+A+B}{E}=\frac{Bp+A}{E}\text{ .}\end{equation}
Combining this with (\ref{B5a}) we get $ABCF=CE+CH(A+B)+C,\quad$i.e.
\begin{equation}\label{B17}\quad ABF=(A+B)H+E+1,\qquad\text{whence, writing}\end{equation}
\begin{equation}\label{B18}G:=AF-H,\qquad\text{we have}\end{equation}
\begin{equation}\label{B19}B=\frac{AH+E+1}{G}=\frac{p+E}{G}\text{ .}\end{equation}

\begin{equation*}\text{Also using (\ref{B9a}), }BH=q\,'=\frac{p\,'(p+E)}{\Delta}=\frac{AH(p+E)}{\Delta}
\text{ ,  so  }B=\frac{A(p+E)}{\Delta}\text{ ,}\end{equation*}
whence from (\ref{B19}),\begin{equation}\label{B20}\Delta=AG\text{ .}\end{equation}
So $G\geq 1$, we have $\Delta\geq 1$, and we now show
\begin{theorem}\label{Th2_3}For any $K_3N$, $1\leq G<2H$ and $1\leq\Delta<2p-2$.\end{theorem}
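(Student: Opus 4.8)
The plan is to derive both inequalities from the relations just established, bootstrapping off Theorem~\ref{Th2_1} and Theorem~\ref{Th2_2}. For the bound on $G$: since $\Delta=AG$ by~(\ref{B20}) and $A\geq 1$, it suffices to bound $\Delta$, but it is cleaner to attack $G$ directly via~(\ref{B18}), $G=AF-H$. From Theorem~\ref{Th2_1} we have $F\leq 2H$, so $G\leq 2AH-H=(2A-1)H$; this is not yet $G<2H$ unless $A=1$, so the case $A\geq 2$ needs the sharper form of Theorem~\ref{Th2_1}, namely that $F=2H$ forces $n=1729$ (where $A=1$), hence for $A\geq 2$ we actually have $F\leq 2H-1$ (or we revisit the case analysis in the proof of Theorem~\ref{Th2_1}, which gives $F\leq\frac{19H}{12}+\frac23$ when $B\geq 3$ and correspondingly small bounds when $B=2$). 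I would combine $A\geq 2$, $B\geq 3$ (forced once $A\geq 2$ since $A,B,C$ are pairwise coprime and increasing) with $F(A,B,C,H)\leq F(2,3,5,H)$ and check $A\cdot F(2,3,5,H)-H<2H$ for $H\geq 2$; this is a one-line numerical verification. The lower bound $G\geq 1$ is already noted in the text (from $\Delta=AG\geq 1$).

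For the bound $\Delta<2p-2=2p'$: starting from~(\ref{B20}), $\Delta=AG<A\cdot 2H=2AH=2p'$, using $G<2H$. So this half is essentially immediate once $G<2H$ is in hand — the two statements are linked precisely by~(\ref{B20}) together with $p'=AH$. The lower bound $\Delta\geq 1$ is again already recorded.

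The main obstacle is handling the case $A\geq 2$ in the proof that $G<2H$: the crude estimate $F\leq 2H$ is too weak there, so one must extract from the proof of Theorem~\ref{Th2_1} the refined inequality $F\leq\frac{19H}{12}+\frac23$ valid whenever $B\geq 3$ (which holds automatically when $A\geq 2$), and then verify $A\left(\frac{19H}{12}+\frac23\right)-H<2H$. Since $A\geq 2$ and $H\geq 2$, one should double-check this does not fail: $A\left(\frac{19H}{12}+\frac23\right)-H\le$ something like $\frac{19AH}{12}+\frac{2A}{3}-H$, and one wants this $<2H$, i.e. $\frac{2A}{3}<H\left(3-\frac{19A}{12}\right)$; for $A=2$ this is $\frac43<\frac{7H}{6}$, true for $H\geq 2$; but for larger $A$ the right side goes negative, so the naive chain breaks and one instead uses $F(A,B,C,H)\leq F(A,B,C,H)$ with $A,B,C$ the three smallest admissible coprime values $\geq A$, getting a bound of the form $F\le cH+c'$ with $c<2/A$ — this is where a little care is needed, and I would organise it as a short case split on $A=2$ versus $A\geq 3$, or more elegantly bound $AF$ directly using~(\ref{B5a}) in the form $F=\frac{H\sum AB+\sum A}{ABC}$ so that $AF=\frac{H\sum AB+\sum A}{BC}$ and then use $BC\ge\frac12(AB+AC)+\dots$ type estimates to get $AF<2H+\text{(small)}$ uniformly. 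I expect the cleanest write-up routes everything through~(\ref{B20}) and the explicit formula~(\ref{B5b}), reducing both claimed inequalities to the single assertion $AF<H+2H=3H$... rather, to $AF-H<2H$, i.e. $AF<3H$, which from $AF=\frac{H(AB+AC+BC)+(A+B+C)}{BC}=H\left(\frac AB+\frac AC+1\right)+\frac{A+B+C}{BC}$ follows once $\frac AB+\frac AC<2$ with a margin absorbing the final fraction — and $\frac AB+\frac AC<2$ holds because $A<B$ and $A<C$ unless $A=1$, while the $A=1$ case is exactly where $F$ can reach $2H$ but then $G=F-H\leq H<2H$ anyway. That dichotomy ($A=1$ handled by $G=F-H$, $A\geq 2$ handled by $\frac AB+\frac AC<2$) is, I think, the whole proof.
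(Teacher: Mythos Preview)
Your final dichotomy is correct and does the job, but the route is genuinely different from the paper's. You work on the Chernick side: from $G=AF-H$ and~(\ref{B5a}) you aim for $AF<3H$, splitting into $A=1$ (where Theorem~\ref{Th2_1} gives $G=F-H\leq H<2H$ at once) and $A\geq 2$ (where the identity $AF-H=H\bigl(\tfrac{A}{B}+\tfrac{A}{C}\bigr)+\tfrac{A+B+C}{BC}$ together with $B\geq A+1$, $C\geq A+2$ gives $\tfrac{A}{B}+\tfrac{A}{C}\leq 2-\tfrac{1}{B}-\tfrac{2}{C}$, and then the residual inequality $A+B+C<H(C+2B)$ holds for $H\geq 2$). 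Having $G<2H$, you then get $\Delta=AG<2AH=2p'$ from~(\ref{B20}). The meandering in the middle of your proposal (the failed attempt via $F\leq\tfrac{19H}{12}+\tfrac{2}{3}$ for large $A$) is a dead end you correctly abandoned; only the closing paragraph is the actual argument.

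The paper instead works on the Beeger--Duparc side and reverses your logical order. From~(\ref{B9a}) it has $\Delta=\dfrac{p'(p+E)}{q'}$; then $E\leq p-1$ (Theorem~\ref{Th2_2}) and $q'>p$ give $\Delta<\dfrac{p'(2p-1)}{p}<2p'=2AH$ in one line, and dividing by $A$ yields $G<2H$. So the paper bounds $\Delta$ first and deduces the bound on $G$, whereas you bound $G$ first and deduce $\Delta$. The paper's proof is three lines with no case split, because Theorem~\ref{Th2_2} plugs directly into~(\ref{B9a}); your approach is longer because it effectively re-derives estimates already implicit in the proof of Theorem~\ref{Th2_1}. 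Both are valid; the paper's is the cleaner one to write up.
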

\begin{proof}
We have $p<q\,'$ and, from Theorem \ref{Th2_2}, $E\leq p\,'$.
Hence, using (\ref{B20}) and~(\ref{B9a}),
\begin{equation*}AG=\Delta=\frac{p\,'(p+E)}{q\,'}<\frac{p\,'(2p-1)}{p}
=2p-3+\frac{1}{p}=2AH-1+\frac{1}{p}<2AH=2p\,'\text{ ,}\end{equation*}
from which both statements in Theorem \ref{Th2_3} follow.
\end{proof}

We observe that (\ref{B18},~\ref{B19},~\ref{B16}) enable us to express in turn $A$, $p$, $B$, $q$, $C$, $r$ and $n$ 
in terms of $E$, $F$, $G$ and $H$, which we shall exploit later.


\subsection{Bounds, variables and challenges}\label{Sec2e}
The next two sections are concerned with finding inequalities $y\leq f(x)$ showing upper bounds for $y$ given $x$,
where $x$ and $y$ are variables connected with $KN$'s and hence $CN$'s.
Here $f$ is an increasing function and, usually,
$x$ is $P$, $p$ or $n$: for example, we shall show that an upper bound
for $r$ given $n$ is given by $r\leq \lceil\sqrt{\frac{n}{2}}\rceil$.
Obviously, if $f^{-1}$ exists, any such relation is equivalent to a lower bound of $f^{-1}(y)$ for $x$ given $y$. 
Also, if $x$ is $n$ and we are looking for all $CN$'s less than some large $X$, we have $y\leq f(X)$.
Our symbols have been defined as integer variables connected with $KN$'s,
but sometimes we shall treat them as real variables obeying the same inequalities as the integer variables.
If possible we shall exhibit $K$-families for which the bound is attained,
or failing that some $CN$'s or $KN$'s for which it is approached.
Some of these bounds were used in executing our algorithm for $C_3N$'s (see \S5),
although invariably a weaker (and more easily proved) bound would have sufficed.

Challenges: occasionally I offer a challenge to find a $C_3N$ or $K_3N$ satisfying certain conditions,
and I would be very interested to receive successful responses at my address at the end of this paper.


\section{Bounds for $KN$ variables with $d\geq 3$}\label{Sec3}

\subsection{Upper bounds given $P$ for $q$, $r$ and $n$}\label{Sec3a}
\begin{theorem}\label{Th3_1}(Duparc's theorem) For any $KN$,
\begin{equation*}q\leq\Bigl(P-1\Bigr)\Bigl(2P+\frac{1}{2}-\sqrt{P-\frac{3}{4}}\Bigr)+1\text{ .}\end{equation*}\end{theorem}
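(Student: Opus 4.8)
The plan is to extract an upper bound for $q$ given $P$ from the already-established machinery, in particular from equations~(\ref{B9a}) and~(\ref{B13}), together with the bounds $\Delta\geq 1$, $E\geq 2$, $\eta\geq 1$ and Theorem~\ref{Th2_2} ($E\leq P-1$). From~(\ref{B9a}) we have $q\,' = P\,'(P+E)/\Delta$, so $q$ is largest when $E$ is largest and $\Delta$ is smallest; but these two quantities are linked through~(\ref{B8}), $\Delta = DE - P^2$, so one cannot simply set $E = P-1$ and $\Delta = 1$ independently. The cleaner route is via the parametrisation in~(\ref{B14a}): $q = \frac{P\,'}{\Delta}\bigl(2P + \frac{\eta-\theta}{2}\bigr) + 1$, where $\theta = \sqrt{4(\eta P - \Delta) + \eta^2}$ from~(\ref{B13a}). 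Here the free parameters are $\Delta\geq 1$ and $\eta\geq 1$, and the claim is that the right-hand side is maximised over this region at $\Delta = 1$, $\eta = 1$, which gives exactly $q \leq (P-1)\bigl(2P + \frac12 - \sqrt{P-\frac34}\,\bigr) + 1$, since then $2P + \frac{1-\theta}{2} = 2P + \frac12 - \frac{\theta}{2}$ and $\theta = \sqrt{4P - 4 + 1} = \sqrt{4P-3} = 2\sqrt{P-\frac34}$.

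The key steps, in order, are: (i) treat $q$ as a function $q(\Delta,\eta)$ of two real variables on the region $\Delta\geq 1$, $\eta\geq 1$ (with the implicit constraint $\eta P - \Delta + \eta^2/4 \geq 0$ so that $\theta$ is real, which is automatic here by~(\ref{B12}) since $s\geq 1$); (ii) show $\partial q/\partial\Delta < 0$, so the maximum occurs at $\Delta = 1$ — this needs care because $\Delta$ appears both in the prefactor $P\,'/\Delta$ and inside $\theta$, but since increasing $\Delta$ decreases both $1/\Delta$ and $(2P + (\eta-\theta)/2)$ (the latter because $\theta$ increases as $\Delta$ decreases — wait, $\theta$ decreases as $\Delta$ increases, so $-\theta$ increases; here one must check the sign carefully, and in fact the dominant effect is the $1/\Delta$ factor, so a direct estimate $q\,' \leq P\,'(P+E) \leq P\,'(2P-1)$ combined with... ) — more robustly, substitute $E = P - s$ with $s\geq 1$ and $\Delta = \eta(P-s) - s^2$ from~(\ref{B11}) and verify monotonicity in $s$ and $\eta$ directly; (iii) show that on $\Delta = 1$ the function is decreasing in $\eta$, reducing to $\eta = 1$; (iv) evaluate at $(\Delta,\eta) = (1,1)$ to obtain the stated bound, and finally round up since $q$ is an integer (or leave the real bound as stated).

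The main obstacle is step~(ii): the variable $\Delta$ (equivalently $s$) enters $\theta$ under a square root as well as appearing in the denominator, so the monotonicity argument is not a one-line sign check. I expect the cleanest path is to reparametrise entirely in terms of $s$ and $\eta$ using~(\ref{B10})--(\ref{B12}): write $q\,' = \dfrac{P\,'(2P - s + \eta)}{\eta(P-s) - s^2} \cdot(\text{something})$ — actually from~(\ref{B14a}) and~(\ref{B13b}), $2P + \frac{\eta-\theta}{2} = 2P - s$, so $q = \dfrac{P\,'(2P-s)}{\eta P - s^2 - \eta s} + 1$ using $\Delta = \eta P - \eta s - s^2$ from~(\ref{B12}). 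Now with $s,\eta\geq 1$ fixed integers we want to maximise, and it is transparent that the numerator decreases in $s$ while the denominator increases in $s$ and in $\eta$, so both should be taken as small as possible, $s = \eta = 1$, giving $q \leq \dfrac{P\,'(2P-1)}{P-2} + 1$ — but this is \emph{weaker} than the stated bound, which suggests the stated bound uses the sharper relation that $s$ itself is not free but satisfies~(\ref{B12}) as a \emph{constraint} tying it to $\Delta$ and $\eta$, so that one genuinely optimises over $(\Delta,\eta)$ with $s$ determined. Resolving this tension — i.e. correctly identifying which pair of the interdependent quantities $\{s,\eta,\Delta,E\}$ to treat as the free optimisation variables so that the extremum lands exactly on $\theta = 2\sqrt{P-\frac34}$ — is the real content of the proof, and I would expect the author to optimise $q$ as given by~(\ref{B14a}) over $\eta\geq 1$ and $\Delta\geq 1$ directly, accepting a short calculus estimate for the $\Delta$-monotonicity.
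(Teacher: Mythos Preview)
Your proposal has the right target---the extremum is at $(\Delta,\eta)=(1,1)$---but the argument is incomplete, and your attempt to repair it via the $(s,\eta)$ parametrisation contains a sign error. You write that the denominator $\Delta=\eta P-\eta s-s^2$ ``increases in $s$''; in fact $\partial\Delta/\partial s=-\eta-2s<0$, so $\Delta$ \emph{decreases} in $s$. Thus in
\[
q\,'=\frac{P\,'(2P-s)}{\eta(P-s)-s^2}
\]
both numerator and denominator decrease as $s$ grows, and the fraction is not obviously monotone. This is why setting $s=1$ (which forces $\Delta=P-2$, not $\Delta=1$) gave you the weaker bound $P\,'(2P-1)/(P-2)$. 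The extremal $s$ corresponding to $\Delta=\eta=1$ is $s=\sqrt{P-\tfrac34}-\tfrac12$, roughly $\sqrt P$, not $1$.

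The paper sidesteps the two-variable optimisation entirely by a case split on $\Delta$. If $\Delta\geq 2$, the crude estimate
\[
q\,'=\frac{P\,'(P+E)}{\Delta}\leq\frac{P\,'(2P-1)}{2}=P\,'\Bigl(P-\tfrac12\Bigr)
\]
(using only $E\leq P-1$) is already below the claimed bound, since $P-\tfrac12<2P+\tfrac12-\sqrt{P-\tfrac34}$ for $P\geq 3$. If $\Delta=1$, only the one-variable monotonicity in $\eta$ is needed: from~(\ref{B14a}) one checks
\[
\frac{d}{d\eta}(\eta-\theta)=1-\frac{\eta+2P}{\theta}<0,
\]
so $q$ is decreasing in $\eta$, and $\eta=1$ gives exactly the stated bound. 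The point you were missing is that the $\Delta$-direction never needs a monotonicity argument at all: for $\Delta\geq 2$ a trivial bound suffices, and for $\Delta=1$ there is nothing to vary.
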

\begin{proof}
From (\ref{B9a}) and Theorem \ref{Th2_2}, if $\Delta\geq2$ we have
\begin{equation}\label{C1}q\/'=\frac{P\,'(P+E)}{\Delta}\leq\frac{P\,'(P+P\,')}{2}=P\,'\Bigl(P-\frac{1}{2}\Bigr)\end{equation}
Fix $\Delta$ and $P$, with $\Delta<P$, and regard $\theta = \theta(\eta)$ and $q=q(\eta)$ as functions of a real variable
 $\eta$, defined by (\ref{B13a}) and (\ref{B14a}) respectively, with $\eta = \eta_o$ for the KN under consideration.
Then we have 
\begin{equation}\label{C2}
\frac{d}{d\eta}(\eta-\theta)=1-\frac{\eta+2P}{\sqrt{(\eta+2P)^2-4(P\,^2+\Delta)}}<0\text{ ,}
\end{equation}
so as $\eta$ increases, $\eta-\theta$ and hence $q$ both decrease, and hence $q(\eta_o) \leq q(1)$.
We shall use (\ref{C2}) for general $\Delta$ later, but with $\Delta=1$, $q(1)=P\,'(2P+\frac{1}{2}-\sqrt{P-\frac{3}{4}})+1$.
But for $P\geq 3$, $P\,'(P-\frac{1}{2})<P\,'(2P+\frac{1}{2}-\sqrt{P-\frac{3}{4}})$,
so with (\ref{C1}) the result follows.\end{proof}

We note that for this maximal $q$ situation, with $\Delta=\eta=1$, (\ref{B15})~gives
\begin{equation}\label{C3}n=N_2(P):=4P\,^5-7P\,^4+7P\,^3-4P\,^2+P\end{equation}

Also from (\ref{B12}, \ref{B13}, \ref{B14}) we get
\begin{equation}\label{C4}\begin{split}P=&p_2(s):=s^2+s+1\text{ ,}\quad\theta=2s+1\\
q=Q_2(P):=\Bigl(P-1\Bigr)\Bigl(2P+\frac{1}{2}-\sqrt{P-\frac{3}{4}}\Bigr)+1=&q_2(s):=2s^4+3s^3+3s^2+2s+1\\
r=R_2(P):=\Bigl(P-1\Bigr)\Bigl(2P+\frac{1}{2}+\sqrt{P-\frac{3}{4}}\Bigr)+1=&r_2(s):=2s^4+5s^3+6s^2+3s+1\end{split}
\end{equation}
and we have the $q$-maximal $K_3$-family $n=n_2(s):=p_2(s) \cdot q_2(s) \cdot r_2(s)$.
A computer search by Ian Williams (see \S\ref{Sec7}) for $1\leq s\leq 4906$
found just 12 $q$-maximal $C_3N$'s, with $s=1,2,6,90,\ldots$ up to 3654, and only one
 $q$-maximal $C_dN$ with $d>3$, namely the $C_6N$ 
$n_2(1493)=7\cdot 19\cdot 31\cdot 541\cdot 9947309489407\cdot 9953972118361\bumpeq 2.2\times 10^{32}$, 
with $P=2230543$.

\begin {theorem}\label{Th3_2}For any $KN$, $r\leq R_3(P):=\frac{1}{2}(P-1)(P+1)^2+1$, with equality if{}f $q=Q_3(P):=P\,^2+P-1$.
\end{theorem}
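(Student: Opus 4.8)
The plan is to bound $r$ given $P$ by again using the formula $r'=P'(P+D)/\Delta$ from~(\ref{B9b}), so that maximising $r$ amounts to making $\Delta$ as small as possible and $D$ as large as possible simultaneously. Since $\Delta\geq 1$, the natural candidate for the extremal case is $\Delta=1$, and then by~(\ref{B11}) we have $\eta=(\Delta+s^2)/E=(1+s^2)/(P-s)$; to get a genuine $KN$ we need $\eta$ to be a positive integer, and the cleanest way to force this is $s=P-1$ (the top of the range $1\leq s\leq P-2$ from Theorem~\ref{Th2_2} is $s=P-2$, but $s=P-1$ corresponds to the degenerate boundary $E=1$, so care is needed — see below). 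First I would work directly from~(\ref{B9b}) with $\Delta=1$: then $r'=P'(P+D)$, and $D$ is maximised subject to the constraint~(\ref{B8}) that $DE-P^2=\Delta=1$, i.e. $DE=P^2+1$; with $E\geq 2$ this gives $D\leq (P^2+1)/2$, hence $r'\leq P'(P+(P^2+1)/2)=\tfrac12 P'(P+1)^2$, which is exactly $R_3(P)-1$ as claimed.

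Next I would pin down the equality condition. Equality forces $\Delta=1$ and $E=2$ and $D=(P^2+1)/2$, and then from~(\ref{B9a}) we get $q'=P'(P+E)/\Delta=P'(P+2)$, so $q=P'(P+2)+1=P^2+P-1=Q_3(P)$, giving one direction. Conversely, if $q=Q_3(P)=P^2+P-1$, then $q'=P^2+P-2=P'(P+2)$, and comparing with $q'=P'(P+E)/\Delta$ (which holds for every $KN$) yields $(P+E)/\Delta=P+2$; since $E\geq 2$ we need $\Delta\geq 2$ would give $P+E=\Delta(P+2)\geq 2P+4>P+E$ for the allowed range of $E\leq P-1$, a contradiction, so $\Delta=1$ and then $E=2$; feeding $\Delta=1$, $E=2$ back into $DE-P^2=1$ gives $D=(P^2+1)/2$, and then~(\ref{B9b}) delivers $r'=P'(P+D)=\tfrac12 P'(P+1)^2$, i.e. $r=R_3(P)$. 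This is the same bookkeeping pattern as in Duparc's theorem (Theorem~\ref{Th3_1}) but run on $r$ and $D$ rather than $q$ and $E$.

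The main obstacle I anticipate is the parity/integrality check: $D=(P^2+1)/2$ is an integer only when $P$ is odd, which is automatic here since every $p_i$ is odd and hence $P=\prod_{i=1}^{d-2}p_i$ is odd — so this is fine, but it should be stated. A subtler point is confirming that the extremal configuration $(\Delta,E,D)=(1,2,(P^2+1)/2)$ is actually consistent with all the defining relations, in particular that the resulting $q$ and $r$ satisfy $q<r$ and $q'\mid n'$, $r'\mid n'$; the monotonicity argument~(\ref{C2}) from the proof of Theorem~\ref{Th3_1}, specialised to $\Delta=1$, shows that among $KN$'s with $\Delta=1$ the choice making $r$ largest is the one with $\eta$ as small as possible, and one checks this corresponds exactly to $s=P-1$, $\eta=1$ after clearing the $E=1$ subtlety by noting we are free to relax to real variables per \S\ref{Sec2e}. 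I would also exhibit, parallel to~(\ref{C3}) and~(\ref{C4}), the explicit $r$-maximal family: with $\Delta=1$, $E=2$ one gets $P=p_3(s)$, $q=Q_3(P)$, $r=R_3(P)$ as polynomials in a parameter, and~(\ref{B15}) then gives $n=N_3(P):=P\cdot Q_3(P)\cdot R_3(P)$, mirroring the treatment of the $q$-maximal case, and remark on whether any such $C_3N$'s exist.
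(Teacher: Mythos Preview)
Your core inequality argument has a genuine gap: you only treat the case $\Delta=1$. You write that ``maximising $r$ amounts to making $\Delta$ as small as possible and $D$ as large as possible simultaneously'' and then jump to $\Delta=1$, but $D$ and $\Delta$ are not independent: $DE=P^2+\Delta$, so increasing $\Delta$ (with $E$ fixed) also increases $D$, and it is not a priori obvious that $(P+D)/\Delta$ is maximised at $\Delta=1$. As it stands, you have proved $r\le R_3(P)$ only for $KN$'s with $\Delta=1$.

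The fix is a one-line substitution, and it is exactly what the paper does: eliminate $D$ via $D=(P^2+\Delta)/E$ to get
\[
r'=\frac{P'(P+D)}{\Delta}=P'\Bigl(\frac{P}{\Delta}+\frac{P^2}{\Delta E}+\frac{1}{E}\Bigr),
\]
after which each term is visibly a decreasing function of $\Delta\ge 1$ and of $E\ge 2$, so the maximum is at $(\Delta,E)=(1,2)$ for \emph{all} $KN$'s simultaneously. Your equality analysis (both directions) is fine and in fact more explicit than the paper's, which simply cites~(\ref{C1}) to rule out $\Delta\ge 2$ when $q'=P'(P+2)$.

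Separately, your last paragraph contains a wrong claim: for fixed $P$ and $\Delta=1$, $r$ is an \emph{increasing} function of $\eta$, not decreasing (differentiate $\eta+\theta$ in~(\ref{B14b}), or see the proof of Theorem~\ref{Th3_3}, where the $r$- and $n$-maximal case has $\eta=\tfrac12(P^2-4P+5)$, the \emph{largest} admissible value, corresponding to $s=P-2$, $E=2$). The remarks about $s=P-1$ and ``$\eta$ as small as possible'' are off track and should be dropped; none of that machinery is needed once you have the display above.
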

\begin{proof}
From (\ref{B8}) and (\ref{B9b}),
\begin{equation*}r'=\frac{P\,'(P+D)}{\Delta}=\frac{P\,'}{\Delta}\Bigl(P+\frac{P\,^2+\Delta}{E}\Bigr)
=P\,'\Bigl(\frac{P}{\Delta}+\frac{P\,^2}{\Delta E}+\frac{1}{E}\Bigr)\text{ .}\end{equation*}
But $\Delta\geq 1$ and $E\geq 2$, so
\begin{equation*}r'\leq P\,'\Bigl(P+\frac{P\,^2}{2}+\frac{1}{2}\Bigr)=\frac{1}{2}P\,'(P+1)^2\text{ ,}\end{equation*}
with equality if{}f $\Delta=1$ and $E=2$,
\, which from~(\ref{B9a}) and~(\ref{C1}) occurs if{}f \mbox{$q\/'=P\,'(P+2)$,} whence the result.\end{proof}

\begin{theorem}\label{Th3_3}For any $KN$,
$n\leq N_3(P):=\frac{1}{2}(P\,^6+2P\,^5-P\,^4-P\,^3+2P\,^2-P)$,
with equality if{}f
$q=Q_3(P)=P\,^2+P-1$ and $r=R_3(P)=\frac{1}{2}(P-1)(P+1)^2+1$.\end{theorem}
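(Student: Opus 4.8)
The plan is to treat $n=Pqr=P(q'+1)(r'+1)$ as a function of the two parameters $E$ and $\Delta$ of the $KN$ and maximise it over the admissible region $2\leq E\leq P-1$ (Theorem~\ref{Th2_2}) together with $\Delta\geq1$. From~(\ref{B9a}) and the relation $D=(P\,^2+\Delta)/E$ (which already appears in the proof of Theorem~\ref{Th3_2}) one has
\[
q'=\frac{P\,'(P+E)}{\Delta}\qquad\text{and}\qquad r'=P\,'\Bigl(\frac{P}{\Delta}+\frac{P\,^2}{\Delta E}+\frac{1}{E}\Bigr)\text{ ,}
\]
both positive and, for fixed $E$, strictly decreasing in $\Delta$. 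Hence $qr=q'r'+q'+r'+1$, and with it $n$, is strictly decreasing in $\Delta$, so it is enough to deal with the case $\Delta=1$.

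Setting $\Delta=1$ gives $q'=P\,'(P+E)$ and $r'=P\,'\bigl(P+(P\,^2+1)/E\bigr)$, so that
\[
q'+r'=P\,'\Bigl(2P+E+\frac{P\,^2+1}{E}\Bigr),\qquad q'r'=(P\,')^2\Bigl(2P\,^2+1+PE+\frac{P\,^3+P}{E}\Bigr)\text{ .}
\]
Regarded as functions of a real variable $E>0$, each of these has the form $\gamma+\alpha E+\beta/E$ with $\alpha,\beta>0$, hence is convex with its unique minimum at $E=\sqrt{\beta/\alpha}$, which works out to $\sqrt{P\,^2+1}$ in both cases. Since every $KN$ with $d\geq3$ has $P\geq3$, and $P-1<\sqrt{P\,^2+1}$, the interval $[2,P-1]$ is non-empty and lies strictly to the left of this minimum; therefore $q'+r'$ and $q'r'$, and so $qr$, are strictly decreasing on $[2,P-1]$, and $n=Pqr$ attains its maximum precisely at $E=2$, $\Delta=1$.

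At $E=2$, $\Delta=1$ one gets $q'=P\,'(P+2)$, hence $q=(P-1)(P+2)+1=P\,^2+P-1=Q_3(P)$, and $r'=\tfrac12P\,'(P+1)^2$, hence $r=\tfrac12(P-1)(P+1)^2+1=R_3(P)$; expanding $n=P\cdot Q_3(P)\cdot R_3(P)$ then gives $N_3(P)$. Because both monotonicity steps were strict, $n=N_3(P)$ forces $\Delta=1$ and $E=2$, which — exactly as in the proof of Theorem~\ref{Th3_2} — is the condition under which both $q=Q_3(P)$ and $r=R_3(P)$; conversely those two values give $n=Pqr=N_3(P)$ at once. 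I expect the only real content to be the observation that the common critical point $\sqrt{P\,^2+1}$ of $q'+r'$ and $q'r'$ lies to the right of the whole range $2\leq E\leq P-1$: this is what converts a possible interior maximum into a clean endpoint statement, and it explains why the bound is reached with $E$ as small as possible even though $q'$ on its own increases with $E$. The remaining work — multiplying out $P\cdot Q_3(P)\cdot R_3(P)$ — is routine.
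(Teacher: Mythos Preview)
Your argument is correct, and it takes a cleaner route than the paper's. The paper splits into cases $\Delta\geq2$ and $\Delta=1$: for $\Delta\geq2$ it bounds $q$ and $r$ separately via~(\ref{C1}) and Theorem~\ref{Th3_2}, and for $\Delta=1$ it invokes the $(s,\eta,\theta)$ machinery of~(\ref{B10})--(\ref{B15}), observing from~(\ref{B15}) that $n$ is increasing in $\eta$ and then maximising $\eta=(s^2+1)/(P-s)$ over $1\leq s\leq P-2$. You instead work directly in the $(E,\Delta)$ coordinates: the reduction to $\Delta=1$ is a one-line monotonicity observation, and the optimisation over $E$ is handled by the neat remark that $q'+r'$ and $q'r'$ share the critical point $\sqrt{P\,^2+1}>P-1$, so both are decreasing on the whole admissible range. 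This avoids the change of variables and the formula~(\ref{B15}) entirely, at the cost of not reusing the earlier apparatus; the paper's version is more in keeping with its running development, while yours is more self-contained.
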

\begin{proof}
For $\Delta\geq 2$, from (\ref{C1}), $q\leq P\,^2-\frac{3}{2}P+\frac{3}{2}<Q_3(P)$ for $P\geq 3$,
and from Theorem \ref{Th3_2}, $r<R_3(P)$, so $n<P \cdot Q_3(P) \cdot R_3(P)$.

For $\Delta=1$ and given $P$, with $\eta$ and $n$ real variables, from (\ref{B15}) $n$ is greatest when $\eta$ is greatest.
But from (\ref{B11})
\begin{equation*}\eta=\frac{\Delta+s^2}{P-s}=\frac{s^2+1}{P-s}\text{ ,}\end{equation*}
and $1\leq s\leq P-2$, giving maximum $\eta=\frac{1}{2}(P\,^2-4P+5)$ when $s=P-2$, i.e. $E=2$ from (\ref{B10}).
But from Theorem \ref{Th3_2}, for $\Delta=1$ and $E=2$ we have $q=Q_3(P)$ and $r=R_3(P)$, so $n\leq P \cdot Q_3(P) \cdot R_3(P)$,
which multiplies out to give the result.\end{proof}

Beeger \cite{Beeger7} for $d=3$ and Duparc \cite{Duparc8} for any $CN$ proved results similar to Theorem \ref{Th3_1}
and weaker than Theorem \ref{Th3_2}.
They were chiefly concerned to show that the number of $CN$'s for given $P$ is finite.
Swift stated the first result of Theorem \ref{Th3_3} for $d=3$ in 1975, but his proof is not published in \cite{Swift9}.

For a $K_3$-family attaining these upper bounds for $r$ and~$n$ given~$P$,
we simply put $P=2t+1$ in $n=N_3(P)=P \cdot Q_3(P) \cdot R_3(P)$.
We note that $Q_2(3)=Q_3(3)=11$ and $R_2(3)=R_3(3)=17$, so the smallest $CN$ (and $KN$, easily shown),
\hbox{$n=561$,} uniquely is $q$-maximal, $r$-maximal and $n$-maximal.
$N_3(P)$ gives $C_3N$'s 
for \hbox{$P=3, 5, 31, 41, 83, 131,\ldots $;} and $N_3(65)=5 \cdot 13 \cdot 4289 \cdot 139393=38860677505$
is the smallest $r$-$n$-maximal $C_4N$ for given $P$. 
A computer search over $3\leq P\leq 132425$ found 178 $C_3N$'s, 18 $C_4N$'s, 29 $C_5N$'s and 9 $C_6N$'s 
which are $r$-$n$-maximal, the largest of which is the $C_3N$ with $P=131711$.

\newpage

\subsection{Upper bounds given $n$ for $P$, $q$, $r$}\label{Sec3b}
For an upper bound for $P$ given $n$, we do not attempt to improve on the obvious:

\begin{theorem}\label{Th3_4}For any $K_dN$,
$P<n^{\frac{d-2}{d}}$\end{theorem}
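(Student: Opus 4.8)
The plan is to combine the defining factorisation $n = \prod_{i=1}^d p_i$ with the fact that $P = \prod_{i=1}^{d-2} p_i$ involves exactly the two \emph{smallest} factors fewer than $n$. The key observation is that the two omitted factors, $q = p_{d-1}$ and $r = p_d$, are each strictly larger than every factor of $P$, hence strictly larger than $p_i$ for $i \le d-2$; in particular $q$ and $r$ each exceed $P^{1/(d-2)}$, since $P$ is a product of $d-2$ factors all at most $\max_i p_i = r$ but also all at most $q < r$. So the first step is to make precise that $qr > P^{2/(d-2)}$.

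From there the argument is a one-line estimate: $n = P \cdot qr > P \cdot P^{2/(d-2)} = P^{1 + 2/(d-2)} = P^{d/(d-2)}$, and taking $d/(d-2)$-th roots (a legitimate increasing operation on positive reals) gives $P < n^{(d-2)/d}$, which is exactly the claimed bound. So the second and final step is just to rearrange this inequality.

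The one point that needs a word of care is the strictness and the edge behaviour. Since $d \ge 3$ we have $d-2 \ge 1$, so the exponents are well defined and positive; and since $1 < p_1 < \dots < p_d$ all factors are genuine integers exceeding $1$, so $q > p_i$ for all $i \le d-2$ is strict and at least one such $p_i$ exists, making $qr > P^{2/(d-2)}$ strict rather than merely $\ge$. (When $d = 3$ this degenerates to the transparent statement $P = p$, $qr > p^2$, hence $n = pqr > p^3$.) There is no real obstacle here — the ``hard part'' is only bookkeeping the exponent arithmetic and confirming that we want the \emph{strict} inequality, which follows because $P$ is a product of numbers each strictly below $q$.

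I would also remark, in passing, that this bound is essentially best possible in the weak sense that for Chernick-type families with $p_i$ of comparable size the ratio $\log P / \log n$ approaches $(d-2)/d$; the theorem makes no attempt to sharpen the constant, as noted in the text preceding the statement.
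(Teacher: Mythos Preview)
Your proposal is correct and takes essentially the same approach as the paper. The only cosmetic difference is that the paper routes the key estimate through $p_{d-2}$ (arguing $P \le p_{d-2}^{\,d-2}$, hence $p_{d-2} \ge P^{1/(d-2)}$, and then $qr > p_{d-2}^{\,2} \ge P^{2/(d-2)}$), whereas you compare the factors of $P$ directly to $q$; both give $qr > P^{2/(d-2)}$ and conclude $n > P^{d/(d-2)}$ in the same way.
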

\begin{proof}
We have $p_1<p\,_2<\ldots<p\,_{d-2}<q<r$ (with obvious modification for $d<5$).
Hence \(P=\displaystyle\prod_{i=1}^{d-2}p_i\leq p_{d-2}^{\;{d-2}},\text{ so }p\,_{d-2}\geq P^{\frac{1}{d-2}}\text{  and  }
P=\dfrac{n}{qr}<\dfrac{n}{p_{d-2}^{\;2}}\leq \dfrac{n}{P^{\frac{2}{d-2}}}.\)
Hence $P^{\frac{d}{d-2}}<n$ and the result follows.\end{proof}

It seems plausible that an upper bound for $q$ given $n$ should correspond to the Theorem \ref{Th3_1} bound for $q$ given $P$,
so in terms of (\ref{C3}) and (\ref{C4}) we should have
\begin{theorem}\label{Th3_5}For any $KN$,
$ q\leq Q_2(N_2^{-1}(n))=q_2(n_2^{-1}(n))$, with equality if{}f 
\begin{equation*}q=Q_2(P)=\Bigl(P-1\Bigr)\Bigl(2P+\frac{1}{2}-\sqrt{P-\frac{3}{4}}\Bigr)+1\end{equation*}

Explicitly,
\begin{equation*}q\leq\biggl\lfloor\sqrt[5]{2n^2}-\sqrt[10]{\frac{n^3}{64}}-\frac{1}{10}\sqrt[5]{\frac{n}{4}}
+\frac{17}{20}\sqrt[10]{\frac{n}{4}}\biggr\rfloor\end{equation*}
\end{theorem}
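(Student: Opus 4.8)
The plan is to prove the equivalent statement that every $KN$ satisfies $n\geq N_2(Q_2^{-1}(q))$, with equality precisely when $q=Q_2(P)$, and then to read off the displayed inequalities. The equivalence holds because $N_2$ and $Q_2$ are strictly increasing on $[3,\infty)$, every $KN$ has $n\geq 561=N_2(3)$ and $q\geq 11=Q_2(3)$ (a short check on the possible small prime factors), and on the $q$-maximal family of (\ref{C4}) --- the unique locus where $q=Q_2(P)$, by the equality clause of Theorem \ref{Th3_1} --- one has $q=Q_2(P)$ and $n=N_2(P)$ by (\ref{C3}), so that $N_2(Q_2^{-1}(q))=n$ there too. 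Write $P^{*}:=Q_2^{-1}(q)$; by Theorem \ref{Th3_1}, $q\leq Q_2(P)$ and hence $P^{*}\leq P$.

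To prove $n\geq N_2(P^{*})$, fix the value of $q$ and first hold $\Delta$ fixed, treating $\eta$ as a real parameter with $P=P(\eta)$ chosen so that $q$ remains constant. Inequality (\ref{C2}), which the proof of Theorem \ref{Th3_1} records as valid for general $\Delta$, shows $q$ is strictly decreasing in $\eta$ at fixed $P$, while $q$ is obviously strictly increasing in $P$, so $P(\eta)$ is strictly increasing; and by (\ref{B15}) $n$ is strictly increasing in each of $P$ and $\eta$. Hence $n$ increases strictly along every curve $\{q=\mathrm{const},\ \Delta=\mathrm{const}\}$, which reduces the problem to showing $n(P,\Delta,1)\geq N_2(P^{*})$ for all real $P\geq3$ and $\Delta\geq1$ with $q(P,\Delta,1)=q$; here, putting $\eta=1$, one has $\theta=\sqrt{4(P-\Delta)+1}$ and, by (\ref{B14a}) and (\ref{B15}), $q(P,\Delta,1)$ and $n(P,\Delta,1)$ are explicit functions of $P$ and $\Delta$ with $q(P,1,1)=Q_2(P)$ and $n(P,1,1)=N_2(P)$ by (\ref{C3})--(\ref{C4}). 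The minimisation of $n(P,\Delta,1)$ over $\Delta$ (with $q$ held fixed and $P$ adjusting) is what I expect to be the main obstacle; the minimum should sit at $\Delta=1$, where $q=Q_2(P)$ then forces $P=P^{*}$ and $n=N_2(P^{*})$. For $\Delta\geq2$ this is in any case cheap: (\ref{C1}) gives $q\leq P'(P-\tfrac12)+1<P^{2}$, so $P>\sqrt q$, and $E\leq P-1$ (Theorem \ref{Th2_2}) together with the identity $Er'=(Pq)'=Pq'+P'$ yields $n=Pqr>Pq(q+1)>q^{5/2}$, which exceeds $N_2(P^{*})$ for all but finitely many $q$, the rest being checked by computation. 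The strict monotonicities deliver the equality clause.

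For the explicit bound, set $P:=N_2^{-1}(n)$, so that $q\leq Q_2(P)$ by the above, and verify for all real $P\geq3$ the one-variable inequality
\begin{equation*}
Q_2(P)\;\leq\;\sqrt[5]{2\,N_2(P)^{2}}\;-\;\sqrt[10]{\tfrac1{64}N_2(P)^{3}}\;-\;\tfrac1{10}\sqrt[5]{\tfrac14 N_2(P)}\;+\;\tfrac{17}{20}\sqrt[10]{\tfrac14 N_2(P)}.
\end{equation*}
Since $N_2(P)=P(4P^{4}-7P^{3}+7P^{2}-4P+1)$, both sides have the expansion $2P^{2}-P^{3/2}-\tfrac32P+\tfrac{11}{8}\sqrt P+O(1)$, the right-hand side exceeding the left by an amount approaching a small positive constant; so the inequality holds for $P$ large and is a finite check for small $P$. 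As $q$ is an integer, taking integer parts then gives the stated bound.
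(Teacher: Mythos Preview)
Your approach is correct in outline and takes a genuinely different---and in one respect simpler---route from the paper.

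The paper fixes the $KN$, first reduces $\eta$ to $1$ keeping $P$ and $\Delta$ fixed (so $q$ increases and $n$ decreases), and then, in the hard case $1\leq\Delta\leq P-2$, reduces $\Delta$ to $1$ along the level curve $n=\text{const}$, proving $dq/d\Delta<0$ there via a page-long computation in $P,\Delta,\theta$. You instead prove the equivalent inequality $n\geq N_2(Q_2^{-1}(q))$, and your key simplification is to note that for any $KN$ with $\Delta\geq2$ the crude bounds $P>\sqrt q$ (from~(\ref{C1})) and $r>q$ already give $n>Pq^2>q^{5/2}$, which dominates $N_2(Q_2^{-1}(q))\sim q^{5/2}/\sqrt2$ once $q$ is large---the crossover is around $q\approx 500$, so a finite check of $KN$s with small $q$ (finitely many, since $P<q$ and $r\leq R_3(P)$) disposes of the rest. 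This completely bypasses the $dq/d\Delta$ calculation. The case $\Delta=1$ is then handled by your $\eta$-reduction along $\{q=\text{const}\}$, which is straightforward since $\partial q/\partial\eta<0$, $\partial q/\partial P>0$, and $n$ is increasing in both $P$ and $\eta$.

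Two points of presentation. First, your crude bound for $\Delta\geq2$ invokes~(\ref{C1}) and Theorem~\ref{Th2_2}, which hold for the \emph{original} $KN$; so you should apply that argument directly to the $KN$ rather than first performing the $\eta$-reduction and then citing $KN$-only facts at a non-$KN$ point $(P_1,\Delta_o,1)$. Split cases on $\Delta_o$ at the outset, and reserve the $\eta$-reduction for $\Delta_o=1$. Second, the trade-off with the paper's argument is that yours requires a computer check for small $q$ (and similarly for the explicit formula at small $P$), whereas the paper's proof is self-contained; but the thresholds are modest and the checks are routine.
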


\begin{proof}
If $V=N_2^{-1}(n)$, the substitution $V=v^2+v+1$ with the algebra of (\ref{C3}) and (\ref{C4}) establishes the equivalence
 of the functions $Q_2N_2^{-1}$ and $q_2n_2^{-1}$, and if $\Delta =\eta =1$ it is immediate that 
$q=Q_2(N_2^{-1}(n))$; also $\Delta =\eta =1$ for $n=561$, the only $KN$ with $P=3$.
So we assume $KN$'s with $\Delta \eta \geq 2$ and $P \geq 5$, and we consider two cases: (i) $\Delta \geq P\,'$,
(ii) $1 \leq \Delta \leq P-2$. We write $\lambda := \frac{P\,'}{\Delta}$.

(i) $\lambda \leq 1$. From (\ref{B13b}) and (\ref{B14a}), $q\/'=\lambda (2P-s)<2\lambda P$, and then 
from (\ref{B15}) 
$n=P\{\lambda^2(4P^2+\eta P+\Delta)+\lambda(4P+\eta)+1\}>4\lambda^2P^3>4\lambda^2(\frac{q\/'}{2\lambda})^3=\frac{q\/'^3}{2\lambda}$.
Hence $q\/'^3<2\lambda n\leq 2n$, so $q<(2n)^{\frac{1}{3}}+1$. Let $v:=n_2^{-1}(n)$, so $n=n_2(v)$, 
and it is easily verified that $(2n_2(v))^{\frac{1}{3}}+1<q_2(v)$ for $v\geq 1.1$. But $n_2(1.1)<963$ 
and every $KN$ with $P\geq 5$ has $n\geq 1105=5\cdot 13 \cdot 17$ and hence $v\geq 1.1$, so $q<q_2(n_2^{-1}(n))$ 
as required.

(ii) $\lambda >1$. Let $n_o$ be any particular $KN$ with associated $K$-variable values 
$P_o, q_o, n_o, \Delta_o, \eta_o, \theta_o, s_o$ and $\lambda_o=\frac{P_o\,'}{\Delta_o}>1$.
We define $s, \theta, q, r$ and $n$ as functions of independent real variables $P, \Delta, \eta$ by the formulae 
of (\ref{B12}-\ref{B15}) over the domain $3\leq P\leq P_o$, $1\leq \eta \leq \eta_o$ and $1\leq \Delta \leq \Delta_o$ 
with $\Delta < P$ (ensuring $\theta \in \Re$), and we write $n=n(P,\Delta,\eta)$ with $n_o=n(P_o,\Delta_o,\eta_o)$, etc. 
Then, if $\eta_o>1$, over the interval $1<\eta <\eta_o$ from (\ref{B14a}), (\ref{C2}) and (\ref{B15}) we have 
$\frac{\partial q}{\partial \eta}<0$ and $\frac{\partial n}{\partial \eta}>0$,
whence $q_a:=q(P_o,\Delta_o,1)\geq q_o$ and $n_a:=n(P_o,\Delta_o,1)\leq n_o$, with equality if{}f $\eta_o=1$ .
In what follows we use the suffices ``a" and ``b" to correspond to``states" $(\Delta,\eta)=(\Delta_o,1)$ and 
$(\Delta,\eta)=(1,1)$ respectively.

Then, if $\Delta_o>1$, for $1\leq\Delta\leq\Delta_o$ we define the function $P=P^*(\Delta)$ implicitly via the relation 
$n(P,\Delta,1)=n_a$, so $n_a=n_b$ and
\begin{equation}
 \label{C5}P\Bigl{\{}\frac{P\,'\,^2}{\Delta^2}(4P^2+P)+5\frac{P\,'P}{\Delta}+1\Bigr{\}}=P\{\lambda^2P(4P+1)+5\lambda P+1\}=n_a,\text{ constant,}
\end{equation}
and as $\Delta$ decreases from $\Delta_o$ to 1 we see from the first of these expressions that P steadily decreases, 
whence from the second $\lambda$ steadily increases. So, for $\Delta_o>\Delta >1$, 
$\frac{P\,'}{\Delta}=\lambda >\lambda_o >1$, whence $P-\Delta >1$, so $\theta = \sqrt{4(P-\Delta)+1}>\sqrt{5}$, 
ensuring that, via (\ref{B13}) and (\ref{B14}) with $\eta =1$, $\theta^*(\Delta):=\theta(P^*(\Delta),\Delta,1)=\theta$, 
and, similarly, $q^*(\Delta)=q$ and $r^*(\Delta)=r$ are defined for $1\leq \Delta\leq\Delta_o$; also 
$P>\Delta+1>2$, so certainly $N_2(P)$ is an increasing function over the relevant domain. Now, for $\Delta_o\geq1$, 
write $P_b:=P^*(1)=N_2^{-1}(n_a)$, so since by hypothesis $P_o\geq 5$ and $\lambda_o>1$, we have 
\begin{align*}
N_2(P_b)=&n(P_b,1,1)=n_a=n(P_o,\Delta_o,1)
\\=&P_o\{\lambda_o^2(4P_o^2+P_o)+5\lambda_oP_o+1\}
>4P_o^3+6P_o^2+P_o\geq 655>561=N_2(3),
\end{align*}
whence $P_b>3$; but $P\geq P^*(1)=P_b$, so $P>3$.
Thus from the above we have
\begin{equation}
 \label{C6}\text{For }1<\Delta<\Delta_o\quad,\quad\lambda>1\quad,\quad P>3 \text{ and }\theta>\sqrt{5}.
\end{equation}
Also let $s_b:=p_2^{-1}(P_b)=\frac{1}{2}(\sqrt{4P_b-3}-1)$ and $q_b:=Q_2(P_b)=q_2(s_b)$, so if $\Delta_o=1$ 
then $q_b=q_a$.

Next we shall show that, with $\eta=1$, $\frac{dq}{d\Delta}<0$ for $1<\Delta<\Delta_o$, and then since $Q_2,q_2,N_2$ and $n_2$ 
are all increasing functions over the relevant domains, we shall have $q^*(\Delta_o)\leq q^*(1)$, and so
$q_o\leq q_a=q(P_o,\Delta_o,1)=q^*(\Delta_o)\leq q^*(1)=q(P_b,1,1)=q_b=Q_2(P_b)=Q_2(N_2^{-1}(n_a))\leq Q_2(N_2^{-1}(n_o))$, 
and likewise $q_o\leq q_b=q_2(s_b)\leq q_2(n_2^{-1}(n_o))$, with equality throughout if{}f $\Delta_o=\eta_o=1$, 
i.e. if{}f $q_o=Q_2(P_o)$ as required.

From the first of the two expressions for $n_a$ in (\ref{C5}) we have
\begin{equation}\label{C7}qr=\frac{P'\,^2(4P\,^2+P)}{\Delta^2}+\frac{5(P\,^2-P)}{\Delta}+1\end{equation}
\begin{equation*}\begin{split}\therefore \frac{d(qr)}{d\Delta}=\frac{1}{\Delta^2}\{2P\,'(4P\,^2+P)+P'\,^2(8P+1)\}\frac{dP}{d\Delta}
-\frac{2P'\,^2(4P\,^2+P)}{\Delta^3}\\+\frac{5(2P-1)}{\Delta}\frac{dP}{d\Delta}-\frac{5PP\,'}{\Delta^2}\text{ ,}\quad\text{that is}
\end{split}\end{equation*}
\begin{equation}\label{C8}\quad\frac{d(qr)}{d\Delta}=\Bigl\{\frac{P\,'(16P\,^2-5P-1)}{\Delta^2}
+\frac{5(2P-1)}{\Delta}\Bigr\}\frac{dP}{d\Delta}
-\frac{2PP'\,^2(4P+1)}{\Delta^3}-\frac{5PP\,'}{\Delta^2}\end{equation}
\begin{equation*}\text{But $Pqr=n_a$, constant, so}\quad
qr\frac{dP}{d\Delta}+P\frac{d(qr)}{d\Delta}=0\text{ ;}\qquad\qquad\end{equation*}
substituting from (\ref{C7}) and (\ref{C8}) gives
\begin{align*}\Bigl\{\frac{P'\,^2P(4P+1)}{\Delta^2}+\frac{5PP\,'}{\Delta}+1\text{ }+\text{ }&\frac{PP\,'(16P\,^2-5P-1)}{\Delta^2}
+\frac{5P(2P-1)}{\Delta}\Bigr\}\frac{dP}{d\Delta}\\=&\frac{2P{\,}^2P'\,^2(4P+1)}{\Delta^3}+\frac{5P\,^2P\,'}{\Delta^2}\text{ ,}
\quad\text{whence}\end{align*}
\begin{equation}\label{C9}\Delta\Bigl\{2P\,'(10P\,^2-4P-1)+5\Delta(3P-2)+\frac{\Delta^2}{P}\Bigr\}\frac{dP}{d\Delta}
=2PP'\,^2(4P+1)+5\Delta PP\,'\end{equation}

Also with $\eta=1$, (\ref{B14a}) gives $2q=\frac{P\,'}{\Delta}(4P+1-\theta)+2$ and from (\ref{B13a}) $\theta^2=4P+1-4\Delta$,
so $\theta=\frac{1}{\theta}(4P+1-4\Delta)$,
\begin{equation*}\frac{d\theta}{d\Delta}=\frac{2}{\theta}\Bigl(\frac{dP}{d\Delta}-1\Bigr)\quad\text{and}\quad
2\frac{dq}{d\Delta}=\Bigl(\frac{1}{\Delta}\frac{dP}{d\Delta}-\frac{P\,'}{\Delta^2}\Bigr)\Bigl(4P+1-\theta\Bigr)
+\frac{P\,'}{\Delta}\Bigl(4\frac{dP}{d\Delta}-\frac{d\theta}{d\Delta}\Bigr)\text{ .}\end{equation*}
\begin{equation*}\text{Hence }
2\Delta^2\frac{dq}{d\Delta}
=\Bigl(\Delta\frac{dP}{d\Delta}-P\,'\Bigr)\Bigl(4P+1-\frac{4P+1}{\theta}+\frac{4\Delta}{\theta}\Bigr)
+\Delta P\,'\Bigl(4\frac{dP}{d\Delta}-\frac{2}{\theta}\frac{dP}{d\Delta}+\frac{2}{\theta}\Bigr),
\text{ i.e.}\end{equation*}
\begin{equation}\label{C10}2\Delta^2\frac{dq}{d\Delta}=
\Delta\Bigl\{8P-3-\frac{6P-1}{\theta}+\frac{4\Delta}{\theta}\Bigr\}\frac{dP}{d\Delta}
-P\,'\Bigl(4P+1\Bigr)\Bigl(1-\frac{1}{\theta}\Bigr)-\frac{2\Delta P\,'}{\theta}\end{equation}

Writing $M:=2P\,'(10P\,^2-4P-1)+5\Delta(3P-2)+\frac{\Delta^2}{P}$, from (\ref{C9}) and (\ref{C10})\newline we get
\begin{equation*}\begin{split}-2\Delta^2M\frac{dq}{d\Delta}=\Bigl\{P\,'\Bigl(4P+1\Bigr)\Bigl(1-\frac{1}{\theta}\Bigr)+\frac{2\Delta P\,'}{\theta}\Bigr\}
\Bigl\{2P\,'(10P\,^2-4P-1)+5\Delta(3P-2)+\frac{\Delta^2}{P}\Bigr\}\\
-\Bigl\{8P-3-\frac{6P-1}{\theta}+\frac{4\Delta}{\theta}\Bigr\}\Bigl\{2PP'\,^2(4P+1)+5\Delta PP\,'\Bigr\}
\end{split}\end{equation*}
The coefficient of $\dfrac{\Delta}{\theta}$ in this expression is 
\begin{equation*}\begin{split}
 &P'\{4P'(10P^2-4P-1)-5(4P+1)(3P-2)+5P(6P-1)-8PP'(4P+1)\}\\
&=P'\{4P'(2P^2-6P-1)-5(6P^2-4P-2)\}=2P'\,^2\{2(2P^2-6p-1)-5(3P+1)\}\\
&=2P'\,^2(4P^2-27P-7)=2P'\,^2(4P+1)(P-7)
\end{split}\end{equation*}
and of $\dfrac{\Delta^2}{\theta}$ is
\begin{equation*}
 P'\Bigl\{-\frac{4P+1}{P}+10(3P-2)-20P\Bigr\}=\frac{P'(10P^2-24P-1)}{P},
\end{equation*}
so
\begin{multline*}
 -2\Delta^2M\frac{dq}{d\Delta}=2P'\,^2(4P+1)(2P^2-P-1)+5\Delta P'(4P^2-2P-2)+\frac{\Delta^2P'(4P+1)}{P}\\
-\frac{2P'\,^2(4P+1)}{\theta}\{4P^2-3P-1\}+\frac{2\Delta P'\,^2(4P+1)(P-7)}{\theta}\\
+\frac{\Delta^2 P'(10P^2-24P-1)}{\theta P}+\frac{2\Delta^3 P'}{\theta P}
\end{multline*}

Factorising terms and dividing by $2P'\,^3(4P+1)$ gives
\begin{equation*}\begin{split}-\frac{\Delta^2M}{(4P+1)P'\,^3}\frac{dq}{d\Delta}=W:=2P+1+\frac{5\Delta(2P+1)}{P\,'(4P+1)}
+\frac{\Delta^2}{2PP'\,^2}-\frac{(4P+1)}{\theta}\\
+\frac{\Delta(P-7)}{\theta P\,'}
+\frac{\Delta^2(10P\,^2-24P-1)}{2\theta PP'\,^2(4P+1)}
+\frac{\Delta^3}{\theta PP'\,^2(4P+1)}\end{split}
\end{equation*}

From (\ref{C6}) $P>3$ and $\theta>\sqrt 5$, so $M>0$ and also $10P^2-24P-1>0$, whence 
\begin{align*}
 W>&2P+1-\frac{4P+1}{\theta}+\frac{5(2P+1)}{\lambda(4P+1)}+\frac{P-7}{\lambda\theta}\\
=&\frac{(2\theta-4)P+\theta-1}{\theta}+\frac{(4P+1)(P-7)+5\theta(2P+1)}{\lambda\theta(4P+1)}\\
>&\frac{(4P+1)(P-7)+5\sqrt{5}(2P+1)}{\lambda\theta(4P+1)}=\frac{4P^2+(10\sqrt{5}-27)P+5\sqrt{5}-7}{\lambda\theta(4P+1)}>0
\end{align*}
since the discriminant of the numerator is negative.

Hence $\dfrac{dq}{d\Delta}=-\dfrac{(4P+1)\lambda^2P\,'W}{M}<0$ as required, completing the proof of the first part of the theorem.

We now outline a method of expressing $Q_2(N_2\,^{-1}(n))$ as a power series in $\frac{1}{\sqrt[10]n}$.
Let $U:=\sqrt[5]{\frac{n}{4}}$, and (\ref{C2}) becomes
\begin{equation}\label{C11}U^5=P\,^5-\frac{7}{4}P\,^4+\frac{7}{4}P\,^3-P\,^2+\frac{1}{4}P\end{equation}
and if we now put $P=U+B_o+\frac{B_1}{U}+\frac{B_2}{U^2}+\ldots$,
substitute into (\ref{C11}) and equate coefficients of $U^{1-k}$ to find successively $B_o$, $B_1$, $B_2$, $\ldots$,
we obtain
\begin{equation}\label{C12}P=U+\frac{7}{20}-\frac{21}{200U}+\frac{1}{250U^2}+\frac{2787}{160000U^3}+\ldots\end{equation}

To find
\begin{multline*}Q_2(N_2\,^{-1}(n))=Q_2(P)=P\,'\Bigl(2P+\frac{1}{2}-\sqrt{P-\frac{3}{4}}\Bigr)+1\\
=2PP\,'+\frac{P\,'}{2}-P\,'\sqrt{P-\frac{3}{4}}+1\text{ ,}\end{multline*}
and with $u=\sqrt U=\sqrt[10]{\frac{n}{4}}$, from (\ref{C12}) we have 
\begin{equation*}P=u^2+\frac{7}{20}-\frac{21}{200u^2}+\frac{1}{250u^4}+\ldots\text{,}\qquad
P\,'=u^2-\frac{13}{20}-\frac{21}{200u^2}+\frac{1}{250u^4}+\ldots\quad\text{and}\end{equation*}
\begin{multline*}\sqrt{P-\frac{3}{4}}=\Bigl(u^2-\frac{2}{5}-\frac{21}{200u^2}+\frac{1}{250u^4}+\ldots\Bigr)^{\frac{1}{2}}
=u\Bigl[1-\Bigl(\frac{2}{5u^2}+\frac{21}{200u^4}-\frac{1}{250u^6}+\ldots\Bigr)\Bigr]^{\frac{1}{2}}\\
=u-\frac{1}{5u}-\frac{29}{400u^3}-\frac{1}{80u^5}+\ldots\text{ ,}\end{multline*}
after applying the binomial series and simplifying.

Substituting back, we get
\begin{equation*}Q_2(P)=2u^4-u^3-\frac{u^2}{10}+\frac{17u}{20}-\frac{1}{5}+\frac{19}{400u}+\frac{53}{2000u^2}
-\frac{477}{8000u^3}+\ldots\end{equation*}

If now the variables all belong to a $KN$, then $q=Q_2(P)$ is an integer and\newline
\hbox{$q=\lfloor 2u^4-u^3-0.1u^2+0.85u\rfloor$,}
and Theorem~\ref{Th3_5} follows.
\end{proof}

Much more simply, we now establish an upper bound for $r$ given $n$; first we prove

\begin{theorem}\label{Th3_6}For any $KN$,
\begin{equation*}p_i=\frac{\lambda_i\,'+\sqrt{\lambda_i'\,^2+4\lambda_in}}{2\lambda_i}
=\Bigl\lceil\sqrt{\frac{n}{\lambda_i}}\Bigr\rceil
\end{equation*}
\end{theorem}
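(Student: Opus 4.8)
The plan is to turn the two defining identities of a $KN$ — that $n = p_iP_i$ and that $P_i{}' = \lambda_ip_i{}'$ — into a single quadratic in $p_i$. Rewriting the second as $P_i = \lambda_i(p_i-1)+1 = \lambda_ip_i - \lambda_i{}'$ and substituting into the first gives
\[
n = p_i(\lambda_ip_i - \lambda_i{}') = \lambda_ip_i{}^2 - \lambda_i{}'p_i,
\qquad\text{i.e.}\qquad \lambda_ip_i{}^2 - \lambda_i{}'p_i - n = 0 .
\]
Because $p_i>0$ and $\sqrt{\lambda_i{}'^2+4\lambda_in} > \lambda_i{}'$, only the ``$+$'' root of this quadratic is admissible (the ``$-$'' root is negative), which is exactly the stated closed form $p_i = \bigl(\lambda_i{}'+\sqrt{\lambda_i{}'^2+4\lambda_in}\bigr)/(2\lambda_i)$.

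For the ceiling expression I would keep the quadratic in the form $p_i{}^2 = \dfrac{n}{\lambda_i} + \dfrac{\lambda_i{}'}{\lambda_i}\,p_i$. Since any $KN$ has $P_i\ge 2$, hence the integer $\lambda_i = P_i{}'/p_i{}'\ge 1$ and so $\lambda_i{}'\ge 0$, the last term is non-negative and therefore $p_i \ge \sqrt{n/\lambda_i}$. For the opposite bound, the same identity gives
\[
(p_i-1)^2 = p_i{}^2 - 2p_i + 1 = \frac{n}{\lambda_i} - \frac{\lambda_i+1}{\lambda_i}\,p_i + 1 ,
\]
and since $\frac{\lambda_i+1}{\lambda_i}\,p_i > 1$ (as $\lambda_i\ge 1$ and $p_i\ge 1$) this is strictly less than $n/\lambda_i$; hence $p_i - 1 < \sqrt{n/\lambda_i}$, i.e. $p_i < \sqrt{n/\lambda_i}+1$.

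Combining the two estimates, $p_i$ is an integer lying in the half-open interval $\bigl[\sqrt{n/\lambda_i},\,\sqrt{n/\lambda_i}+1\bigr)$, which contains exactly one integer, namely $\lceil\sqrt{n/\lambda_i}\rceil$; so $p_i = \lceil\sqrt{n/\lambda_i}\rceil$. (One should record the degenerate possibility that $\sqrt{n/\lambda_i}$ is itself an integer: then $\lambda_i{}'=0$, so $\lambda_i=1$, $n=p_i{}^2$ and $p_i=\sqrt n$, consistent with the formula.) There is no genuine obstacle in this argument — it is a short piece of algebra — and the only places needing care are keeping the abbreviation $x':=x-1$ straight throughout and checking that the upper bound $(p_i-1)^2<n/\lambda_i$ is strict for every admissible value of $\lambda_i$, including $\lambda_i=1$.
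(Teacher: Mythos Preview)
Your proof is correct and follows essentially the same route as the paper: derive the quadratic $\lambda_ip_i^{2}-\lambda_i{}'p_i-n=0$ from $n=p_iP_i$ and $P_i{}'=\lambda_ip_i{}'$, take the positive root, and then bound $(p_i-1)^2<n/\lambda_i\le p_i^{2}$ to identify $p_i$ as the ceiling. The only difference is that the paper invokes the fact $\lambda_i\ge E\ge 2$ (which holds for every $i$ in a $KN$, since $E\ge 2$ by definition and $\lambda_i>\lambda_d=E$ for $i<d$), giving the strict inequality $n/\lambda_i<p_i^{2}$ directly; you instead work with $\lambda_i\ge 1$ and treat $\lambda_i=1$ as a separate degenerate case, which in fact never occurs for a $KN$.
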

\begin{proof}
Using the notation of \S\ref{Sec2a}, we have
\begin{equation*}p_ip_i\,'=\frac{p_iP_i\,'}{\lambda_i}=\frac{n}{\lambda_i}-\frac{p_i}{\lambda_i}\text{,}
\quad\text{whence}\end{equation*}
\begin{equation}\label{C13}\frac{n}{\lambda_i}=p_i\,^2-\Bigl(1-\frac{1}{\lambda_i}\Bigr)p_i
\end{equation}
and $\lambda_i p_i\,^2-\lambda_i\,'p_i-n=0$, giving the first equality.\newline
Also $\lambda_i\geq E\geq 2$, so $0<1-\frac{1}{\lambda_i}<1$,
and from (\ref{C13}) we thus have
\begin{equation*}(p_i-1)^2<\frac{n}{\lambda_i}<p_i\,^2\text{,}\quad\text{so}\quad
p_i=\Bigl\lceil\sqrt{\frac{n}{\lambda_i}}\Bigr\rceil\end{equation*}
completing the theorem.\end{proof}

\begin{theorem}\label{Th3_7}For any $KN$,
\begin{equation*}r\leq\frac{\sqrt{8n+1}+1}{4}\quad\text{and}\quad r\leq\Bigl\lceil\sqrt{\frac{n}{2}}\Bigr\rceil,
\quad\text{with equality if{}f}\quad E=2\text{.}\end{equation*}\end{theorem}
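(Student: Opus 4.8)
The plan is to reduce the theorem to the previously established Theorem~\ref{Th3_6} in the case $\lambda_i = E$. Recall that $r = p_d$ is the largest prime factor, and $\lambda_d = E$ by definition. Since $E \geq 2$ for any $KN$, Theorem~\ref{Th3_6} applied with $i = d$ gives immediately $r = \lceil\sqrt{n/E}\,\rceil$, together with the exact quadratic formula $r = (E' + \sqrt{E'^2 + 4En})/(2E)$. So the first step is simply to invoke Theorem~\ref{Th3_6} with $\lambda_d = E$.

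The second step is to observe that both upper bounds are decreasing functions of $E$. For the explicit algebraic bound: from the quadratic formula $r = (E-1+\sqrt{(E-1)^2 + 4En})/(2E)$, one checks that this decreases as $E$ increases (for fixed $n$), so its maximum over $E \geq 2$ occurs at $E = 2$, where it equals $(1 + \sqrt{1 + 8n})/4 = (\sqrt{8n+1}+1)/4$. Alternatively and more transparently, from $r^2 \geq n/E$ one gets nothing directly, but from (\ref{C13}) with $i = d$ we have $n/E = r^2 - (1 - 1/E)r$, i.e. $n = Er^2 - (E-1)r = Er(r-1) + r$, so $n \geq 2r(r-1) + r = 2r^2 - r$ when $E \geq 2$, which rearranges to $8r^2 - 4r - 4n \leq 0$, giving $r \leq (4 + \sqrt{16 + 128n})/16 = (1 + \sqrt{8n+1})/4$ as required. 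For the ceiling bound: $r = \lceil\sqrt{n/E}\,\rceil \leq \lceil\sqrt{n/2}\,\rceil$ since $\sqrt{n/E} \leq \sqrt{n/2}$ and the ceiling is monotone.

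The third step is the equality analysis. Equality $n = 2r^2 - r$ in the chain above forces $E = 2$; conversely, if $E = 2$ then $n = 2r(r-1) + r$ exactly, so $r$ satisfies $2r^2 - r - n = 0$ and $r = (1 + \sqrt{8n+1})/4$ with equality. For the ceiling bound, when $E = 2$ Theorem~\ref{Th3_6} already gives $r = \lceil\sqrt{n/2}\,\rceil$ exactly; when $E \geq 3$ one has $\sqrt{n/E} \leq \sqrt{n/3} < \sqrt{n/2}$, but this alone does not guarantee a strict drop in the ceiling, so the statement ``with equality if{}f $E = 2$'' should be read as attached to the algebraic bound (for which it is exact) — I would phrase the proof so that the iff is proved for $r = (\sqrt{8n+1}+1)/4$, and note the ceiling bound follows a fortiori.

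I do not expect any serious obstacle here: everything is an immediate consequence of Theorem~\ref{Th3_6} together with elementary monotonicity in $E$. The only point requiring a little care is the bookkeeping in the equality clause — making sure the reader understands which of the two displayed bounds the ``if{}f $E=2$'' qualifies, and confirming that $E=2$ is actually attainable (it is, e.g. for $n = 561$ where $r = 17$ and indeed $(\sqrt{8\cdot 561+1}+1)/4 = (\sqrt{4489}+1)/4 = (67+1)/4 = 17$), so the bound is sharp.
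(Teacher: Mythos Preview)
Your approach is essentially the paper's: both proofs invoke Theorem~\ref{Th3_6} with $i=d$ and use monotonicity in $E$. Your derivation of the algebraic bound via $n = Er(r-1)+r \geq 2r(r-1)+r$ is actually a touch more direct than the paper's route, which instead bounds $\sqrt{n/2} < \tfrac{\sqrt{8n+1}+1}{4}$ and combines this with the ceiling estimate.

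There is, however, a genuine gap in your treatment of the equality clause for the ceiling bound. You write that for $E\geq 3$ the inequality $\sqrt{n/E} < \sqrt{n/2}$ ``does not guarantee a strict drop in the ceiling'' and propose to attach the if{}f only to the algebraic bound. But the theorem as stated asserts if{}f $E=2$ for \emph{both} bounds, and the paper does prove this. The missing ingredient is that every $KN$ has $n\geq 561$ and $n$ odd. For $E\geq 3$ one then has
\[
r=\Bigl\lceil\sqrt{\tfrac{n}{E}}\Bigr\rceil<\sqrt{\tfrac{n}{E}}+1<\sqrt{\tfrac{n}{2}}<\Bigl\lceil\sqrt{\tfrac{n}{2}}\Bigr\rceil,
\]
where the middle inequality $\sqrt{n/3}+1<\sqrt{n/2}$ holds for $n\geq 561$ (indeed for $n>60$), and the last inequality is strict because $n$ odd means $n/2$ is not an integer, so $\sqrt{n/2}$ is not an integer. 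Thus strict inequality in the ceiling bound is forced whenever $E\geq 3$, completing the if{}f for both bounds. You should include this argument rather than reinterpreting the statement.
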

\begin{proof}This follows easily from Theorem \ref{Th3_6}.

$n\geq 561$ and $n$ is odd, so with $i=d$ in Theorem \ref{Th3_6} and $\lambda_d=E\geq 3$, we have
\begin{equation*}r=\Bigl\lceil\sqrt{\frac{n}{E}}\Bigr\rceil<\sqrt{\frac{n}{E}}+1<\sqrt{\frac{n}{2}}<\Bigl\lceil\sqrt{\frac{n}{2}}\Bigr\rceil
\quad\text{and}\quad \sqrt{\frac{n}{2}}<\frac{\sqrt{8n+1}+1}{4}\text{ ;}
\end{equation*}
in conjunction with $E=2$ in Theorem \ref{Th3_6}, Theorem \ref{Th3_7} follows.
\end{proof}

The smallest $CN$ which is $r$-maximal for given $n$ but not for given $P$ 
(see Theorem \ref{Th3_2}) is $8911=7 \cdot 19 \cdot 67.$
Another of this type is $949803513811921=17 \cdot 31 \cdot 191 \cdot 433 \cdot 21792241,$
which Pinch in~\cite{Pinch} says contains the largest prime factor among $CN$'s $<10^{15}$.


\section{Bounds for $K_3N$ variables}\label{Sec4}
\subsection{Upper bounds given $p$ for $A$, $B$, $C$}\label{Sec4a}
To establish an upper bound for $A$ given $p$, we need one for $A$ given $H$:
\begin{theorem}\label{Th4_1}For any $K_3N$, $A<3H-\sqrt{\frac{H}{2}}$.
\end{theorem}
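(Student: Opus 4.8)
The plan is to reduce at once to the case $F=1$. By $(\ref{B18})$ and Theorem~\ref{Th2_3}, $AF=G+H<3H$, so if $F\geq 2$ then $A<\tfrac32 H$, which is well below $3H-\sqrt{H/2}$ for every $H\geq 2$; hence I may assume $F=1$, and then $(\ref{B18})$, $(\ref{B20})$ give $G=A-H$ and $\Delta=A(A-H)$.

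With $F=1$ I would next confine the variable $E$. By Theorem~\ref{Th2_2}, $E\leq p-1=AH$, so put $E=AH-j$ with $j\geq 0$ an integer. Then $(\ref{B19})$ becomes $B=\dfrac{p+E}{A-H}=\dfrac{2AH+1-j}{A-H}$, and, since $E=B(A-H)-AH-1=AB-H(A+B)-1$ is increasing in $B$ while $B\geq A+1$ (the $p_i$ being strictly increasing), this forces the cap $j\leq 3HA-A^{2}-A+H+1$. Moreover integrality of $B$ requires $(A-H)\mid(2AH+1-j)$; and from $(\ref{B8})$, $(\ref{B20})$, $E$ must divide $DE=\Delta+p^{2}=A^{2}(H^{2}+1)+AH+1$, with the quotient $D>E$ (as $\lambda_2>\lambda_3$ are integers) and $C=(p+D)/(A-H)$ a further integer exceeding $B$.

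Now suppose, for contradiction, that $A\geq 3H-\sqrt{H/2}$; write $\tau=3H-A$, so that $1\leq\tau\leq\sqrt{H/2}$ and $A-H=2H-\tau$. The condition $(A-H)\mid(2AH+1-j)$ becomes $j\equiv H\tau+1\pmod{2H-\tau}$, and, combined with $0\leq j\leq 3HA-A^{2}-A+H+1=(3H-\tau)(\tau-1)+H+1$, it admits only $O(\tau)=O(\sqrt H)$ candidate values of $j$. Each fixes $E$ as an explicit quadratic in $H$, and one then verifies that the divisibility $E\mid A^{2}(H^{2}+1)+AH+1$ — equivalently, that $D$ be a positive integer — fails identically once $H$ exceeds a small bound: e.g.\ for $\tau=1$ the cap already forces $j=H+1$, whence $E=(3H+1)(H-1)$, which one checks divides $A^{2}(H^{2}+1)+AH+1$ for no $H\geq 2$; the larger $\tau$ are handled the same way, invoking if necessary the integrality of $C$ and the pairwise coprimality of $A,B,C$. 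This contradiction yields $\tau>\sqrt{H/2}$, i.e.\ $A<3H-\sqrt{H/2}$. The hard part is exactly this last squeeze: the soft inequalities coming from $B\geq A+1$, $C>B$, $DE=\Delta+p^{2}$ and $E\leq AH$ by themselves give only $A\leq 3H-1$, so the extra $\sqrt{H/2}$ must be extracted from the integrality of $E,B,C,D$ (arithmetic conditions modulo $A-H$), and organizing the $O(\sqrt H)$ residue classes uniformly is the delicate point.
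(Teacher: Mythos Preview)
Your reduction to $F=1$ is correct and in fact slicker than the paper's: using $AF=G+H<3H$ from (\ref{B18}) and Theorem~\ref{Th2_3} immediately gives $A<\tfrac32 H<3H-\sqrt{H/2}$ whenever $F\geq 2$.

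The rest of the argument, however, has a genuine gap, which you yourself flag in the last sentence. After fixing $F=1$ and writing $\tau=3H-A$, you observe that the congruence and cap on $j$ leave $O(\tau)$ candidates, and then assert that for each the divisibility $E\mid A^2(H^2+1)+AH+1$ ``fails identically once $H$ exceeds a small bound''. But this is not a finite verification. As $H$ grows the admissible range of $\tau$ grows to $\sqrt{H/2}$, and for each $\tau$ there are roughly $3\tau/2$ residues $k$ with $j=H\tau+1+k(2H-\tau)$ in range; so you face an unbounded family of pairs $(\tau,k)$, each producing $E$ as a quadratic in $H$ with different coefficients, and you offer no mechanism to rule them out uniformly. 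Even your sample case $\tau=1$, $j=H+1$ is asserted rather than proved (and $j=0$ must also be treated for small $H$). The phrases ``one checks'', ``handled the same way'', and ``the delicate point'' mark precisely where the argument is missing.

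The paper avoids this by taking a completely different route after the reduction to $F=1$. It rewrites the single relation $H(AB+BC+CA)+A+B+C=ABC$ in the shifted variables $\alpha=A-3H=-a$, $\beta=B-3H$, $\gamma=C-3H$, obtaining a quadratic $m(H)=0$ whose coefficients depend only on $a$, $S=\alpha+\beta+\gamma$ and $(\gamma-\beta)/2$. It then proves, by comparing the graph of $F(H)$ for different triples $(\alpha,\beta,\gamma)$, that the relevant root $H^*$ is monotone in these parameters; maximising over the admissible region (essentially pushing $\beta$ down to $\alpha$) gives an explicit closed-form bound $H^*\leq H^\dagger(a,S,a+\sigma)$, which is shown to be $<2a^2$ for all $a\geq 2$, $S\geq 1$, with the case $a=1$ disposed of directly. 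This yields $a>\sqrt{H/2}$ at once. The point is that the paper treats $B$ and $C$ as continuous parameters subject to one algebraic equation and extracts the bound by calculus, whereas your approach tries to pin down $E$ and $D$ through layered integrality conditions modulo $A-H$, which does not organise into a uniform estimate.
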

\begin{proof}
Suppose for some $K_3N$ that $A\geq\lambda H$ for some $\lambda>0$. Then (\ref{B5b}) yields
\begin{multline*}F\leq H\Bigl(\frac{1}{\lambda H}+\frac{1}{\lambda H+1}+\frac{1}{\lambda H+2}\Bigr)
+\frac{1}{\lambda H(\lambda H+1)}+\frac{1}{\lambda H(\lambda H+2)}+\frac{1}{(\lambda H+1)(\lambda H+2)}\\
=\frac{1}{\lambda}+\Bigl(\frac{1}{\lambda}-\frac{1}{\lambda(\lambda H+1)}\Bigr)
+\Bigl(\frac{1}{\lambda}-\frac{2}{\lambda(\lambda H+2)}\Bigr)
+\frac{3\lambda H+3}{\lambda H(\lambda H+1)(\lambda H+2)}\\
=\frac{3}{\lambda}-\frac{3\lambda H(H-1)+4H-3}{\lambda H(\lambda H +1)(\lambda H+2)}<\frac{3}{\lambda}\quad\text{since}\quad H\geq 2
\end{multline*}

But $F\geq 1$, so putting $\lambda=3$ we get $A<3H$.
Also if $\lambda=\frac{3}{2}$ then $F<2$, whence if $\frac{3H}{2}\leq A< 3H$, then $F=1$.
We define a big$A$-$K_3N$ to be a $K_3N$ with $A\geq \frac{3H}{2}$, and likewise a big$A$-$C_3N$.
All other $K_3N$'s obviously obey Theorem \ref{Th4_1}.


So we now put $A=3H-a$, with $1\leq a \leq \frac{3H}{2}$, and using $F=1$ we shall show that for given $a$,
$H<2a^2$, yielding Theorem \ref{Th4_1}. We write $\alpha:=-a$, \hbox{$A=3H+\alpha$,} 
\hbox{$B=3H+\beta$,} $C=3H+\gamma$, $\sigma:=\frac{\beta+\gamma}{2}$,
$\tau:=\frac{\gamma-\beta}{2}$, so $\beta\gamma=\sigma^2-\tau^2$, and \hbox{$S:=\sum \alpha=2\sigma-a$.}
Then for a $K_3N$ we have $-a=\alpha<\beta=\sigma-\tau<\gamma$, so~$0<\tau<a+\sigma$, and from~(\ref{B5})
\begin{equation*}F=\frac{(\sum AB)H+\sum A}{ABC}=\frac{27H^3+6(\sum\alpha)H^2+(\sum\alpha\beta+9)H+\sum\alpha}
{27H^3+9(\sum\alpha)H^2+3(\sum\alpha\beta)H+\alpha\beta\gamma}=1-\frac{m}{ABC}
\end{equation*}
where
\begin{subequations}\label{D1}
\begin{equation}\label{D1a}m:=m(H):=3(\sum\alpha)H^2+(2\sum\alpha\beta-9)H+\alpha\beta\gamma-\sum\alpha
\end{equation}
\begin{equation}\label{D1b}\quad =3SH^2+(2\sigma^2-2\tau^2-4a\sigma-9)H-a(\sigma^2-\tau^2-1)-2\sigma
\end{equation}
\end{subequations}
So for any $K_3N$, $F=1$ if{}f $m=0$, and $A=3H-a>0$, so $H>\frac{a}{3}$;
also for any big$A$-$K_3N$, $\gamma>0$ (since otherwise
\begin{equation*}\frac{H}{C}=\frac{H}{3H+\gamma}\geq\frac{1}{3}\quad\text{and so}\quad
F>\sum\frac{H}{3H+\alpha}>1\text{ , but}\quad F=1\text{).}\end{equation*}

\begin{equation*}\text{We now regard }m(H)\text{ and } F(H):=1-\frac{m(H)}{(3H+\alpha)(3H+\beta)(3H+\gamma)}\text{ as functions}
\end{equation*}
of an unrestricted real variable $H$, where $\alpha,\beta,\gamma$ are real, $\alpha<0<\gamma$ and $\alpha\leq\beta\leq\gamma$.
Essentially by considering the graph of $F(H)$, we show that $m(H)=0$ has a root $H^*>\frac{a}{3}=-\frac{\alpha}{3}$ if{}f $S>0$.
We have
\begin{align*}m(-\frac{\alpha}{3})&=(\alpha+\beta+\gamma)\frac{\alpha^2}{3}-\{2\alpha(\beta+\gamma)+2\beta\gamma-9\}\frac{\alpha}{3}
+\alpha\beta\gamma-(\alpha+\beta+\gamma)\\
&=\frac{\alpha}{3}\{\alpha^2-\alpha(\beta+\gamma)+\beta\gamma\}+2\alpha-\beta-\gamma\\
&=\frac{\alpha}{3}(\alpha-\beta)(\alpha-\gamma)+(\alpha-\beta)+(\alpha-\gamma)\text{ ,}
\end{align*}
with similar results for $m(-\frac{\beta}{3})$ and $m(-\frac{\gamma}{3})
$,
whence $m(-\frac{\alpha}{3})<0$ and $m(-\frac{\gamma}{3})>0$.
First we show that in all cases $m(H)=0$ has a root $h^*$ satisfying $-\frac{\gamma}{3}<h^*<-\frac{\alpha}{3}$.
\begin{equation}\label{D2}\text{As}\quad H\rightarrow -\frac{\gamma}{3}- \quad\text{and as}\quad H\rightarrow -\frac{\alpha}{3}+\text{,}
\qquad F(H)\rightarrow\infty\qquad\qquad\qquad\end{equation}
Then if $\beta=\gamma$, as $H\rightarrow-\frac{\gamma}{3}+$, $F(H)\rightarrow\infty$;
as $H\rightarrow-\frac{\alpha}{3}-$, $F(H)\rightarrow-\infty$;
and $F(H)$ is continuous over the interval $(-\frac{\gamma}{3},-\frac{\alpha}{3})$,
so there exists $h^*\in (-\frac{\gamma}{3},-\frac{\alpha}{3})$ with $F(h^*)=1$ and $m(h^*)=0$ as required;
and similarly if $\beta=\alpha$.
Also if $\alpha<\beta<\gamma$ and (a) $m(-\frac{\beta}{3})\neq 0$, then as $H\rightarrow-\frac{\gamma}{3}+$
and as $H\rightarrow-\frac{\alpha}{3}-$, $F(H)\rightarrow-\infty$, and $F(H)$ changes sign as $H$ increases
through the singularity at $H=-\frac{\beta}{3}$, so again there exists $h^*$ with $F(h^*)=1$ and $m(h^*)=0$ in
at least one of the intervals $(-\frac{\gamma}{3},-\frac{\beta}{3})$ and $(-\frac{\beta}{3},-\frac{\alpha}{3})$,
and so in $(-\frac{\gamma}{3},-\frac{\alpha}{3})$; while (b) if $m(-\frac{\beta}{3})=0$ then $h^*=-\frac{\beta}{3}$.
Hence $m(H)$ has at least one zero $h^*\in(-\frac{\gamma}{3},-\frac{\alpha}{3})$, as required.
Then, if $S=0$, $m(H)$ is linear and $h^*$ is the only root.
But, if $S>0$, $m(H)$ is quadratic and as $H\rightarrow\infty$, $F(H)\rightarrow 1-$, so with (\ref{D2})
this gives $H^*>-\frac{\alpha}{3}$ such that $F(H^*)=1$ and thus a unique second root $H^*$ of $m(H)=0$
with $H^*>\frac{a}{3}$.
Similarly for $S<0$, the second root $H^*$ satisfies $H^*<-\frac{\gamma}{3}<0$, and hence $H^*>\frac{a}{3}$
if{}f $S>0$, as required (in fact $H^*>\frac{a}{2}$, else $F(H^*)>1$, as is easily seen).

Assuming henceforth that $S>0$, from the above argument for $H>\frac{a}{3}$ we have $F(H)>F(H^*)=1$ if{}f $H<H^*$.
So if, with obvious notation, for $(\alpha_i,\beta_i,\gamma_i)$, $\alpha_1=\alpha_2=-a$, and for every $H>\frac{a}{3}$
we have $F_1(H)>F_2(H)$, then $F_1(H_2^*)>F_2(H_2^*)=1=F_1(H_1^*)$, so $H_2^*<H_1^*$. In particular we deduce that

(i) if $\alpha_1=\alpha_2$, $\beta_1\leq\beta_2$, $\gamma_1\leq\gamma_2$, with at least one strict inequality,
then $H_2^*<H_1^*$;

(ii) if $\alpha_1=\alpha_2=-a$ and $S_1=S_2=S$ (so $\sigma_1=\sigma_2=\sigma=\frac{a+S}{2}$),
but $\tau_1>\tau_2$, then $H_2^*<H_1^*$, i.e for fixed $a$, $S$ and $\sigma$, $H^*$ increases as $\tau$ increases.
This follows because $(\alpha,\beta,\gamma)=(\alpha,\sigma-\tau,\sigma+\tau)$ and
\begin{equation*}F(H)=H\sum\frac{1}{A}+\sum\frac{1}{AB}=\frac{H}{A}+\frac{1}{BC}\Bigl\{\Bigl(H+\frac{1}{A}\Bigr)\Bigl(B+C\Bigr)+1\Bigr\}
\text{ ;}\end{equation*}
 but $B+C=6H+2\sigma$ and $BC=(3H+\sigma)^2-\tau^2$; so for $H>\frac{a}{3}$, $F_1(H)>F_2(H)$ and $H_2^*<H_1^*$.

We write $H^\dagger(a,S,\tau):=H^*(\alpha,\beta,\gamma):=H^*$, and $h(\beta,\gamma):=H^*(-1,\beta,\gamma)$.
We observe that for a big$A$-$K_3N$, $H=H^*$, and that $A$, $B$, $C$ pairwise coprime and $H$ even requires that no two
of $\alpha, \beta, \gamma$ are even; so, since $\beta+\gamma=S+a$, $S$ odd requires $\alpha, \beta, \gamma$ all odd.
We now show that for any big$A$-$K_3N$, $H<2a^2$, considering cases~(a)~$a=1$, (b) $a\geq 2$, $S=1$, and (c) $a\geq 2$, $S\geq 2$.

(a) Any $K_3N$ with $a=1$ is a big$A$-$K_3N$.
So $\alpha=-1$, and for $\beta=0$, since $S>0$ and $B$, $C$ are coprime, $\gamma\geq 5$ and from (\ref{D1a})
and (i) above we have\newline
$h(0,\gamma)\leq h(0,5)\bumpeq 1.68<2\leq H$.
Also for $\beta\geq 2$, by (i) we have\newline
\hbox{$h(\beta,\gamma)<h(1,\gamma)\leq h(1,3)\bumpeq 1.63<2$,}
 covering all cases except \hbox{$h(1,2)\bumpeq 2.14\not\in 2\mathbb{N}$.}
Thus there are no $K_3N$'s with $a=1$.

(b) We have $S=1$, odd, so $a$ is odd and $a\geq 3$. Then maximum $\tau$ for given $a$
 occurs when $(\alpha,\beta,\gamma)=(-a,-a+2,2a-1)$, giving $\tau=\frac{3a-3}{2}$;
 and from (ii) above $H\leq g:=g(a):=H^\dagger(a,1,\frac{3a-3}{2})$,
 which from (\ref{D1a}) is given by
 \begin{multline*}3g^2-(6a^2-8a+13)g+2a^3-5a^2+2a-1=0,\quad\text{whence}\\
 g=\frac{1}{6}(6a^2-8a+13+\sqrt{36a^4-120a^3+280a^2-244a+181}),
 \end{multline*}
\vskip-12pt
i.e.
\begin{equation}\label{D3}H\leq g(a)=\frac{1}{6}(6a^2-8a+13+\sqrt{(6a^2-10a+15)^2+68a-44})\\
\end{equation}$\qquad\qquad\qquad\qquad<2a^2\quad\text{for}\quad a\geq 3.$

(c) First we find $H^\dagger(a,S,a+\sigma)=H^*(-a,-a,2a+S)$, which from (\ref{D1b}) is the root $H^*$ of
\hbox{$3SH^2-(2a^2+8a\sigma+9)H+a^3+2a^2\sigma-2\sigma+a=0$,} which has 
\begin{align*}\text{discriminant}&=
(2a^2+8a\sigma+9)^2-12(2\sigma-a)(a^3+2a^2\sigma-2\sigma+a)\\
&=16a^4+32a^3\sigma+16a^2\sigma^2+48a^2+96a\sigma+48\sigma^2+81\\
&=16(a^2+3)(a+\sigma)^2+81\qquad\text{and hence}
\end{align*}
\vskip-12pt
\begin{subequations}\label{D4}
\begin{align}
H^\dagger(a,S,a+\sigma)&=\frac{1}{6S}(2a^2+8a\sigma+9+\sqrt{16(a^2+3)(a+\sigma)^2+81})\\
&=\frac{1}{6S}(2a^2+4a(a+S)+9+\sqrt{4(a^2+3)(3a+S)^2+81})\label{D4a}\\
&=\frac{a^2}{S}+\frac{2a}{3}+\frac{3}{2S}+\sqrt{\Bigl(a^2+3\Bigr)\Bigl(\frac{a}{S}+\frac{1}{3}\Bigr)^2+\frac{9}{4S^2}}\label{D4b}
\end{align}
\end{subequations}

So from (i) or (\ref{D4b}) for given $a$, $H^\dagger(a,S,a+\sigma)$ decreases as $S$ increases,
and hence for any big$A$-$K_3N$ with $S\geq 2$ and $a\geq 2$, also using (ii) we have
\begin{equation*}H=H^\dagger(a,S,\tau)<H^\dagger(a,S,a+\sigma)\leq H^\dagger\Bigl(a,2,\frac{3a+2}{2}\Bigr)
\text{,}\end{equation*}
and from~(\ref{D4a}) we have
\begin{align*}H^\dagger\Bigl(a,2,&\frac{3a+2}{2}\Bigr)<2a^2\Leftrightarrow
\sqrt{\Bigl(a^2+3\Bigr)\Bigl(\frac{a}{2}+\frac{1}{3}\Bigr)^2+\frac{9}{16}}<\frac{3}{2}a^2-\frac{2a}{3}-\frac{3}{4}\\
&\Leftrightarrow\frac{a^4}{4}+\frac{a^3}{3}+\Bigl(\frac{3}{4}+\frac{1}{9}\Bigr)a^2+a+\frac{1}{3}
<\frac{9}{4}a^4-2a^3+\Bigl(\frac{4}{9}-\frac{9}{4}\Bigr)a^2+a\\
&\Leftrightarrow 6a^4-7a^3-8a^2>1\\
&\Leftrightarrow a^3(4a-7)+2a^2(a^2-4)>1\text{, which is true for $a\geq 2$.}
\end{align*}

Thus for all big$A$-$K_3N$'s $H<2a^2$, and Theorem \ref{Th4_1} follows.
\end{proof}

It is easily shown that $H\leq g(a)<2a^2$ in case (c) as well as case (b)
and, for $H\geq 6,\quad g^{-1}(H)>\sqrt{\frac{H}{2}}$; hence for all $K_3N$'s
\begin{equation}\label{D5}A\leq3H-g^{-1}(H)\text{,}\end {equation}
which for $H\geq 6$ is
a slightly stronger but less convenient result than Theorem~\ref{Th4_1}.

It is also not onerous to extend the approach of case (a) for finding all $K_3N$'s with $a=1$:
for $a=2$, again there are none (although $9801=9 \cdot 11 \cdot 99$ obeys the Korselt divisibility criterion);
but for $a=3$ there are two, the big$A$-$C_3N$ $7 \cdot 23 \cdot 41=6601,$ and the big$A$-$K_3N$
\begin{equation}\label{D6}n^*=547 \cdot 575 \cdot 659=207271975.\end{equation}
Actually from (\ref{D3}), $g(3)=14$, giving rise to $n^*$ with $(A,B,C)=(39,41,47)$
and $(\alpha,\beta,\gamma)=(-3,-1,5)$; and $39=A<3H-\sqrt{\frac{H}{2}}\bumpeq 39.35$.
$n^*$ is the only $K_3N$ with equality in (\ref{D5}), since $g(5)$ is irrational,
and for $a\geq 7$ we have
\begin{equation*}6a^2-10a+15<\sqrt{(6a^2-10a+15)^2+68a-44}<6a^2-10a+16\text{,}\end{equation*}
so from (\ref{D3}) $g(a)$ is irrational.
Gordon Davies (see~\S\ref{Sec5a})
 did a computer search for \hbox{big$A$-$K_3N$'s} with $S=1$,
using $(\alpha,\beta,\gamma)=(-a,-a+2t,2a-2t+1)$, for odd $a$ up to 1239
and $1<t<\frac{3a}{4}$: no more were found, and $H^*$ was rational only for $a=151$, $t=89$, giving $H^*=13067\frac{1}{3}$.

In like manner to (\ref{D4}) we find
\begin{align*}H^\dagger(a,S,0)&=\frac{1}{6S}(4a\sigma-2\sigma^2+9+\sqrt{4(\sigma^2+3)(a+\sigma)^2+81}\ )\\
&=\frac{1}{12S}\{(a+S)(3a-S)+18+\sqrt{[(a+S)^2+12](3a+S)^2+324}\}\text{,}
\end{align*}
so for fixed $S$ and large $a$ we have $H^\dagger(a,S,0)\sim \frac{a^2}{2S}$ and $H^\dagger(a,S,a+\sigma)\sim\frac{2a^2}{S}$,
whence $(1+o(1))\frac{a^2}{2S}<H^*<(1+o(1))\frac{2a^2}{S}$.
Hence with $\lambda:=\frac{a^2}{H}$ and $A=3H-\sqrt{\lambda H}$, for $a\gg S$ we expect $\frac{S}{2}<\lambda<2S$;
we may regard $\frac{A}{H}<3$ and $\lambda>\frac{1}{2}$ as different measures of closeness to the bound of Theorem \ref{Th4_1}.
For $n<10^{24}$, there are only 71 big$A$-$C_3N$'s, and just 11 with $\frac{A}{H}>2$;
the two largest $\frac{A}{H}$ values are about 2.342 and 2.683, and only these two big$A$-$C_3N$'s have $a>S$.
With $\frac{A}{H}\bumpeq 2.683$, we have $n^\dagger:=835327 \cdot 893359 \cdot 1117117=833645090806507981$,
with $(A,B,C,H)=(1497,1601,2002,558)$, so $a=177$, $S=78$ and $\lambda\bumpeq 56.15$.
For $n^*$, from (\ref{D6}), $\frac{A}{H}\bumpeq2.786$ and $\lambda=\frac{9}{14}$.
I was able to find marginal improvements on the bound of Theorem~\ref{Th4_1} for sufficiently large $a$
(e.g. (\ref{D7}) below), but none implying $\lambda>\mu$ for some $\mu>\frac{1}{2}$.

\begin{tabbing}
Challenges \=1(a): Find a big$A$-$K_3N$ with $a>3$ and $S=1$.\\
\>1(b): \=Find a big$A$-$C_3N$ with $n>6601$ and $\lambda<10$.
\end{tabbing}

\begin{theorem}\label{Th4_2}For any $K_3N$, $A<\sqrt{3(p-1)}-\frac{1}{2}\sqrt[4]{\frac{p-1}{12}}$.
\end{theorem}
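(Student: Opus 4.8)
The plan is to combine the bound for $A$ given $H$ (Theorem~\ref{Th4_1}) with a lower bound for $H$ in terms of $p$. Recall $p = p_1 = A_1 H + 1 = AH + 1$, so $p\,' = AH$, i.e.\ $H = p\,'/A$. Substituting this into Theorem~\ref{Th4_1}, which says $A < 3H - \sqrt{H/2}$, gives $A < 3p\,'/A - \sqrt{p\,'/(2A)}$, hence $A^2 + A\sqrt{p\,'/(2A)} < 3p\,'$, that is $A^2 + \sqrt{A p\,'/2} < 3p\,'$. So the first step is to treat this as an inequality constraining $A$ in terms of $p\,'$, and then solve (or bound the solution of) the resulting relation to extract the stated closed form $A < \sqrt{3p\,'} - \tfrac12\sqrt[4]{p\,'/12}$.

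The natural way to carry this out is to write $A = \sqrt{3p\,'} - \delta$ for an unknown positive correction $\delta$ and show $\delta > \tfrac12\sqrt[4]{p\,'/12}$. Squaring, $A^2 = 3p\,' - 2\delta\sqrt{3p\,'} + \delta^2$, so the constraint $A^2 + \sqrt{A p\,'/2} < 3p\,'$ becomes $\sqrt{A p\,'/2} + \delta^2 < 2\delta\sqrt{3p\,'}$, hence $\delta > \dfrac{\sqrt{A p\,'/2} + \delta^2}{2\sqrt{3p\,'}} > \dfrac{\sqrt{A p\,'/2}}{2\sqrt{3p\,'}} = \dfrac12\sqrt{\dfrac{A}{6}}$ (after simplifying $\sqrt{p\,'/2}/\sqrt{3p\,'} = 1/\sqrt{6}$). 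To finish, one needs $\sqrt{A/6} \ge \sqrt[4]{p\,'/12}$ as soon as $A$ is not too small, i.e.\ $A^2/36 \ge p\,'/12$, i.e.\ $A^2 \ge 3p\,'$ — which is exactly the case where the theorem's bound is already trivially impossible (if $A \ge \sqrt{3p\,'}$ then a fortiori $A \ge \sqrt{3p\,'} - \tfrac12\sqrt[4]{p\,'/12}$, contradiction). So the argument splits: either $A \ge \sqrt{3p\,'}$, which is immediately ruled out by the previous display (since $A < 3H - \sqrt{H/2} < 3H$ and $3H = 3p\,'/A \le \sqrt{3p\,'}$ would force $A^2 \le p\,'$, even worse), or $A < \sqrt{3p\,'}$ and then the bootstrap $\delta > \tfrac12\sqrt{A/6}$ together with $A$ close to $\sqrt{3p\,'}$ yields $\delta > \tfrac12\sqrt[4]{p\,'/12}$ after one more substitution of the crude bound $A > \sqrt{3p\,'} - \text{(something small)}$ back into $\sqrt{A/6}$.

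The main obstacle I expect is the last bootstrapping step: $\delta > \tfrac12\sqrt{A/6}$ gives $\delta$ in terms of $A$, but the theorem wants $\delta$ in terms of $p\,'$, and $A$ itself is what we are bounding. One must feed the inequality back into itself — start from $A < \sqrt{3p\,'}$, deduce a first estimate $\delta > c_1$, hence $A < \sqrt{3p\,'} - c_1$, plug that into $\sqrt{A/6}$ to sharpen $\delta$, and check the iteration converges to at least $\tfrac12\sqrt[4]{p\,'/12}$. Because the correction term $\sqrt[4]{p\,'/12}$ is of lower order than $\sqrt{3p\,'}$, a single iteration should suffice, but verifying the inequality cleanly (rather than just asymptotically) will require some care with the constants — in particular checking it holds down to the smallest relevant $p$, namely $p = 3$ for a $K_3N$, or invoking that small cases are covered by direct inspection as in the proof of Theorem~\ref{Th4_1}. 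A cleaner alternative, which I would try first, is to avoid iteration entirely: rewrite $A^2 + \sqrt{Ap\,'/2} < 3p\,'$ as a polynomial inequality in $A$ by isolating and squaring the radical once more, obtaining $(3p\,' - A^2)^2 > Ap\,'/2$, i.e.\ $A^4 - 6p\,'A^2 - \tfrac12 p\,' A + 9p\,'^2 > 0$; then one only needs to show that the claimed value $A_0 := \sqrt{3p\,'} - \tfrac12\sqrt[4]{p\,'/12}$ makes the quartic nonpositive (so that the true $A$, lying below the largest root, satisfies $A < A_0$). Substituting $A_0$ and expanding — keeping track of the orders $p\,'$, $p\,'^{3/4}$, $p\,'^{1/2}$, $p\,'^{1/4}$ — should reduce to a polynomial inequality in $p\,'^{1/4}$ that is routine to verify for $p\,' \ge 2$.
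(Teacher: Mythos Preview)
Your overall strategy --- substitute $H = p\,'/A$ into an $A$-versus-$H$ bound and then solve for $A$ --- matches the paper's. But there is a genuine gap: Theorem~\ref{Th4_1} alone is marginally too weak. From $A < 3H - \sqrt{H/2}$ with $H = p\,'/A$ you obtain $\phi(A) := A^2 + \sqrt{Ap\,'/2} < 3p\,'$; since $\phi$ is increasing, to conclude $A < A_0 := \sqrt{3p\,'} - \tfrac12\sqrt[4]{p\,'/12}$ you would need $\phi(A_0) \ge 3p\,'$. Writing $u := \sqrt[4]{p\,'/12}$ (so $\sqrt{3p\,'} = 6u^2$, $A_0 = 6u^2 - u/2$, $3p\,' = 36u^4$) one finds
\[
\phi(A_0) - 3p\,' \;=\; 6u^3\bigl(\sqrt{1 - \tfrac{1}{12u}} - 1\bigr) + \tfrac{u^2}{4},
\]
and the required inequality $\phi(A_0) \ge 3p\,'$ is equivalent to $\sqrt{1 - 1/(12u)} \ge 1 - 1/(24u)$, which upon squaring becomes $0 \ge 1/(576u^2)$ --- false for every $u > 0$. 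So $\phi(A_0) < 3p\,'$ strictly, the constraint does \emph{not} exclude $A = A_0$, and neither your bootstrap nor your quartic test can succeed: the root of $\phi(A) = 3p\,'$ sits just above $A_0$, not at or below it.

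The paper repairs this by first sharpening Theorem~\ref{Th4_1}. Revisiting that proof's case analysis for $a \ge 4$ (both $S = 1$ via~(\ref{D3}) and $S \ge 2$ via~(\ref{D4a})) yields $H < 2(a - \tfrac12)^2$ rather than merely $H < 2a^2$, whence $a > \sqrt{H/2} + \tfrac12$ and thus $A < 3H - \sqrt{H/2} - \tfrac12$ (equation~(\ref{D7})). The extra constant $-\tfrac12$ is precisely what is missing from your inequality. With it, the paper sets $f(x) := 3p\,'/x - x - \sqrt{p\,'/(2x)} - \tfrac12$, observes $f(A) > 0$, shows $f$ is strictly decreasing on $(0,p\,')$, and verifies by an explicit computation that $f(A_0) < 0$, forcing $A < A_0$. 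The residual cases $a \le 3$ are handled by the enumeration already carried out at the end of the proof of Theorem~\ref{Th4_1}, which leaves only the two $K_3N$'s $6601$ and $n^*$ to check directly.
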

\begin{proof}Using the notation and from the above discussion of Theorem \ref{Th4_1}, \hbox{Theorem \ref{Th4_2}}
holds for $n^*$ and for $n=6601$, and hence for $a\leq 3$. For $S\geq 2$ and $a\geq 4$, from~(\ref{D4a})
we have
\begin{align*}H<H^\dagger(a,2,\frac{3a+2}{2})=&\frac{1}{12}\bigl(6a^2+8a+9+\sqrt{4(a^2+3)(3a+2)^2+81}\bigr)\\
<&\frac{1}{12}\bigl(6a^2+8a+9+\sqrt{4(a+1)^2(3a+2)^2}\bigr)\\
=&a^2+\frac{3}{2}a+\frac{13}{12}<2\Bigl(a-\frac{1}{2}\Bigr)^2
\quad\text{since  }a\geq 4\text{ ;}
\end{align*}
and for $S=1$, $a$ is odd, and for $a\geq 5$ from (\ref{D3}), $H\leq g(a)<2(a-\frac{1}{2})^2$.
Hence for $a\geq 4$, for any $K_3N$, $H<2(a-\frac{1}{2})^2$, so $a>\sqrt{\frac{H}{2}}+\frac{1}{2}$
and
\begin{equation}\label{D7}A<3H-\sqrt{\frac{H}{2}}-\frac{1}{2}\end{equation}
Let $f(x):=\dfrac{3p\,'}{x}-x-\sqrt{\dfrac{p\,'}{2x}}-\dfrac{1}{2}.$
Then since $H=\dfrac{p\,'}{A}$, from (\ref{D7}) $f(A)>0$.
Also
\begin{equation*}f'(x)=-\frac{3p\,'}{x^2}-1+\frac{1}{2}\sqrt{\frac{p\,'}{2x^3}}
=-\frac{\sqrt{p\,'}}{2\sqrt{2}x^2}(6\sqrt{2p\,'}-\sqrt{x})-1<0\quad\text{for  }0<x<p\,',
\end{equation*}
so $f(x)$ decreases as $x$ increases over this interval, which certainly contains $A$.\newline
Writing $\quad\rho:=\sqrt[4]{3p\,'}\quad\text{and}\quad \mu:=\sqrt{\rho^2-\frac{\rho}{2\sqrt{6}}}
\quad$we have
\begin{align*}f\Bigl(\sqrt{3p\,'}&-\frac{1}{2}\sqrt[4]{\frac{p\,'}{12}}\Bigr)=f(\mu^2)
=\frac{\rho^4}{\mu^2}-\mu^2-\frac{\rho^2}{\sqrt{6}\mu}-\frac{1}{2}\\
&=\frac{1}{\sqrt{6}\mu^2}\bigl\{\sqrt{6}(\rho^4-\mu^4)-\rho^2\mu\bigr\}-\frac{1}{2}
=\frac{1}{\sqrt{6}\mu^2}\Bigl(\rho^3-\frac{\sqrt{6}}{24}\rho^2-\rho^2\mu\Bigr)-\frac{1}{2}\\
&=\frac{\rho^2\{(\rho-\frac{\sqrt{6}}{24})^2-\mu^2\}}{\sqrt{6}\mu^2(\rho-\frac{\sqrt{6}}{24}+\mu)}-\frac{1}{2}
=\frac{\rho^2}{96\sqrt{6}\mu^2(\rho+\mu-\frac{\sqrt{6}}{24})}-\frac{1}{2}
\end{align*}
\begin{equation*}\text{But }p\geq 3,\text{ so }\rho>1.565, \mu>1.459, \;\frac{\rho^2}{\mu^2}<1.15,
\;\frac{\rho^2}{96\sqrt{6}\mu^2(\rho+\mu-\frac{\sqrt{6}}{24})}<0.00168
\end{equation*}
and $f(\mu^2)<0$, whence since $f(A)>0$ and $f(x)$ decreases with $x$, $A<\mu^2$, giving Theorem \ref{Th4_2}.
\end{proof}

For $n^*$ (see (\ref{D6})), $A=39<\sqrt{3p\,'}-\frac{1}{2}\sqrt[4]{\frac{p\,'}{12}}\bumpeq 39.1736$.

In seeking an upper bound for $B$ given $p$, we know from Theorem \ref{Th3_1} that we can have
$q\,'=p\,'(2p-\sqrt{p-\frac{3}{4}}+\frac{1}{2})$, with $H=p\,'$ and $B=2p-\sqrt{p-\frac{3}{4}}+\frac{1}{2}$;
we~show that it is posssible for $B$ marginally to exceed this:

\begin{theorem}\label{Th4_3}(a) For any $K_3N$, $B<2p-\sqrt{p-\frac{3}{4}}+\frac{\sqrt{3}+1}{2\sqrt{3}}$

$\qquad\qquad\qquad$(b) For any $C_3N$, $B<2p-\sqrt{p-\frac{3}{4}}+\frac{\sqrt{7}+1}{2\sqrt{7}}$
\end{theorem}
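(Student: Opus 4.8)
The plan is to bound $B$ by combining the defining relation \eqref{B5a}, namely $H(AB+AC+BC)+A+B+C=FABC$, with the constraint $A<B<C$ and the already-established bound $F\le 2H$ from Theorem~\ref{Th2_1}. Solving \eqref{B5b} for the relevant quantities, write the exact relation in the form $F=H(1/A+1/B+1/C)+(1/AB+1/AC+1/BC)$ and notice that, since $A<B$, the term $H/A$ exceeds $H/B$, while $H/C$ is comparatively small because $C>B$. The key is to get the best possible lower bound on $C$ in terms of $B$: from $C>B$ together with pairwise coprimality $C\ge B+1$, but in fact for the extremal configuration one wants to use $\Delta=AG\ge1$ from \eqref{B20} and the relation \eqref{B19}, $B=(AH+E+1)/G=(p+E)/G$, to pin down how large $C$ must be. I would then push through the inequality $F\ge1$ (for part (a)) or a sharper bound for primes (for part (b)), to convert ``$B$ too large'' into a contradiction.

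More concretely, first I would set $B=2p-\sqrt{p-\tfrac34}+\delta$ for an unknown shift $\delta$ and ask for which $\delta$ no $K_3N$ can exist. Using $p=AH+1$, so $H=p'/A$, and treating the $q$-maximal case of Theorem~\ref{Th3_1} as the benchmark (there $B=2p-\sqrt{p-\tfrac34}+\tfrac12$, $H=p'$, $A=1$), I would perturb away from $A=1$: for $A\ge2$ the value of $H$ drops to $p'/A$, and one has to check whether \eqref{B5a}, equivalently \eqref{B17} $ABF=(A+B)H+E+1$, can still be satisfied with $B$ this large. Substituting $H=p'/A$ and $F\ge1$ into \eqref{B17} gives $AB\le(A+B)p'/A+E+1$, and combined with the upper bound $E\le p'$ from Theorem~\ref{Th2_2} this yields a quadratic inequality in $B$ whose largest root is slightly above $2p-\sqrt{p-\tfrac34}$; extracting the constant term of that root is exactly the asserted bound. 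For part (b), for a $C_3N$ the primes $A,B,C$ are genuine primes, so $C\ge B+1$ can be improved (the smallest admissible $C$ beyond $B$, given the coprimality and primality constraints, forces a marginally better inequality), and one also has better control on $F$; chasing the constant through gives $\tfrac{\sqrt7+1}{2\sqrt7}$ in place of $\tfrac{\sqrt3+1}{2\sqrt3}$.

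The main obstacle I anticipate is the bookkeeping around the $\sqrt{p-\tfrac34}$ term: one must expand $B^2$, $AB$, etc., keeping the irrational part exact, and show that the surplus over $2p-\sqrt{p-\tfrac34}$ is controlled by a constant \emph{uniformly in $p$}, not merely asymptotically. This is the same flavour of estimate as in the proof of Theorem~\ref{Th4_2}, where a substitution like $\rho:=\sqrt[4]{3p'}$ linearised the square roots; I would similarly introduce $w:=\sqrt{p-\tfrac34}$ and argue that $f(B)>0$ forces $B$ below the claimed threshold by checking $f$ at the threshold value is negative and $f$ is monotone over the range of interest (as in the $f(\mu^2)<0$ argument). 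The delicate point is verifying the sign at the threshold for all $p\ge3$; I expect this reduces, after clearing denominators, to a polynomial inequality in $w$ (equivalently in $p$) that holds for $w\ge\sqrt{9/4}=3/2$, i.e. $p\ge3$, and is easily confirmed by checking the boundary case $p=3$ and the leading behaviour. Distinguishing (a) from (b) is then just a matter of which lower bound for $C$ (hence which denominator, $3$ versus $7$) one is entitled to use.
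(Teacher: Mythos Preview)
Your approach has a genuine gap at the critical case, and the constants $3$ and $7$ do not arise from where you think.

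First, the inequality you extract from \eqref{B17}, namely $AB\le (A+B)H+E+1$ (using $F\ge1$), does not bound $B$ from above when $A$ is small. With $H=p'/A$ it becomes $B\bigl(A^2-(p-1)\bigr)/A\le 2p-1$, and for $A^2<p-1$ --- in particular for $A=1$, the case you acknowledge as the benchmark --- the coefficient of $B$ is negative and you get no upper bound at all. So the ``quadratic in $B$'' you describe controls only the easy regime and says nothing about the configurations that actually approach the bound.

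Second, the split the paper uses is on $G$, not on $A$. From \eqref{B19} one has $B=(p+E)/G$; for $G\ge2$ this gives $B<p<2p-S$ immediately. The real content is $G=1$, where $B=p+E=2p-s$ with $s=p-E$, and then from \eqref{B20} $\Delta=A$ and from \eqref{B11} $\eta=(s^2+A)/(p-s)$. The paper splits further on $\eta$: for $\eta\ge2$ one gets $s>S$ directly. The delicate case is $G=\eta=1$, which forces the Pell-type relation $\phi^2-F\theta^2=4-5F$ (equations \eqref{D14}--\eqref{D15}), and there one shows $0<S-s<\tfrac{1}{2\sqrt{F}}$. The constants come from the minimum admissible $F$ in this regime: $F=1$ is ruled out by \eqref{D13a}, giving $F\ge3$ and hence part~(a); for $C_3N$'s one shows $F=3$ forces one of $p,q,r$ to be divisible by $3$ (so $n=561$, with $A=1$) and $F=5$ has no solutions, whence $F\ge7$ and part~(b).

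Finally, your remark that ``for a $C_3N$ the primes $A,B,C$ are genuine primes'' is incorrect: the primes are $p,q,r$; the quantities $A,B,C$ are $p'/H,\,q'/H,\,r'/H$, pairwise coprime but not prime in general. The distinction between (a) and (b) has nothing to do with lower bounds on $C$; it comes entirely from the arithmetic of the Pell equation modulo small primes.
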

\begin{proof}Write $S=\sqrt{p-\frac{3}{4}}-\frac{1}{2}$. We seek $B>2p-S$, and we consider two cases:

(i) $G\geq 2$. Then from Theorem \ref{Th2_2}, $E\leq p-1$, and from (\ref{B19})\newline
\begin{equation*}B=\frac{p+E}{G}\leq\frac{2p-1}{2}<p<2p-S\end{equation*}

(ii) $G=1$. Then from (\ref{B19}) $B=p+E$ and from (\ref{B10}) $E=p-s$, so
\begin{equation}\label{D8}B=2p-s\end{equation}
Also from (\ref{B20}), $\Delta=AG=A$, whence from (\ref{B11})
\begin{equation}\label{D9}\eta=\frac{s^2+A}{p-s}\end{equation}
and using (\ref{B9b}) and (\ref{B10}), $CH=r'=\dfrac{AH(p+D)}{A},\quad $ so
\begin{equation}\label{D10}C=p+D=2p+s+\eta\end{equation}
Since $H$ is even, from (\ref{B18})
\begin{equation}\label{D11}H=AF-1
\quad\text{and}\; A\;\text{and}\;F\;\text{are both odd, and also}\end{equation}
\begin{equation}\label{D12}p=AH+1=FA^2-A+1.\end{equation}

We now consider the two sub-cases (a) $\eta\geq 2$ and (b) $\eta=1$.

(a) If $\eta\geq 2$, from (\ref{D9}) we have $s^2+A\geq 2(p-s)$, whence $s^2+2s\geq 2p-A$\newline
and $s\geq \sqrt{2p-A+1}-1>\sqrt{2p-\sqrt{3p-3}+1}-1$, using Theorem \ref{Th4_2}.\newline
But $\sqrt{2p-\sqrt{3p\,'}+1}-1>S$ reduces to $p+\frac{3}{2}>\sqrt{p-\frac{3}{4}}+\sqrt{3p\,'}$,
which is true for $p\geq 3$, so $s>S$ and from (\ref{D8}) $B<2p-S$.

(b) If $\eta=1$, from~(\ref{D9}) $s^2+A=p-s$, whence from~(\ref{D12})
\begin{subequations}\label{D13}
\begin{equation}\label{D13a}s^2+s=s(s+1)=FA^2-2A+1=F'A^2+A'\,^2\quad\text{and}\end{equation}
\begin{equation}\label{D13b}s=\sqrt{FA^2-2A+\frac{5}{4}}-\frac{1}{2}\end{equation}
\end{subequations}
From~(\ref{D13a}) we note that $F=1$ implies $A'\,^2=s(s+1)$, which is impossible.
So by (\ref{D11}) for odd $F\geq 3$, remembering that $x=x'+1$ and using (\ref{D12}) again, we have
\begin{equation*}S-s=\sqrt{FA^2-A+\frac{1}{4}}-\sqrt{FA^2-2A+\frac{5}{4}}
=\frac{A'}{\sqrt{(F'A^2+(A-\frac{1}{2})^2}+\sqrt{F'A^2+A'\,^2+\frac{1}{4}}}
\end{equation*}
Hence if $A=1$, $s=S$ as for Theorem \ref{Th3_1}, but if $A\geq 3$ ($A$ is odd by (\ref{D11})) then
\begin{equation*}S-s=\frac{1}{\sqrt{F'(1+\frac{1}{A'})^2+(1+\frac{1}{2A'})^2}
+\sqrt{F'(1+\frac{1}{A'})^2+1+\frac{1}{4A'\,^2}}}>0
\end{equation*}
From this we deduce that $0<S-s<\frac{1}{2\sqrt{F}}\leq\frac{1}{2\sqrt{3}}$ since $F\geq 3$,\newline
whence $2p-S<B=2p-s<2p-S+\frac{1}{2\sqrt{3}}$, giving Theorem \ref{Th4_3}(a).

Also we see that for given $F$, if there is an infinite sequence of $K_3N$'s with increasing $A$ values,
then $2p-S+\frac{1}{2\sqrt{F}}-B\rightarrow0+$ as $A\rightarrow\infty$.
There is such a sequence if{}f (\ref{D13a}) has an infinite number of solutions for $s$ and $A$ which result
in pairwise coprime $A$, $B$ and $C$, and $E\geq 2$. Such solutions we call {\emph {acceptable}}.
With 
\begin{equation}\label{D14}\phi:=2FA-2=2H,\quad \theta=2s+1,\qquad\text{(\ref{D13a}) implies}\end{equation}
\begin{equation}\label{D15}\phi^2-F\theta^2=4-5F,\end{equation}
and from the theory of quadratic forms a necessary condition for this to have a solution is that
$x^2\equiv F\pmod{(5F-4)}$ be soluble.

If $F\equiv 0\pmod 3$, suppose $(\phi,\theta)$ gives an acceptable solution. 
Then {\emph {working in }}$\mathbb{Z}_3$, from (\ref{D14}) $\theta=2s+1$, so $s=2\theta+1$,
from (\ref{D13a}) $s(s+1)=FA^2-2A+1$,
so $A=(2\theta+1)(2\theta+2)-1=\theta^2+1$, from (\ref{D11}) $H=FA-1=2$; so $p=AH+1=2\theta^2=2$ unless $\theta=0$, $p=0$;
from (\ref{D8}) $B=2p-s=\theta^2-2\theta-1=(\theta-1)^2+1$,
so $q=BH+1=2(\theta-1)^2=2$ unless $\theta=1$, $q=0$; with $\eta=1$, from (\ref{D8},~\ref{D10},~\ref{D14})
$C=B+\theta=\theta^2-\theta+2$, so $r=CH+1=2\theta(\theta-1)+2=2$ unless $\theta=2$, $r=0$.
Thus in $\mathbb{Z}_3$, for any $\theta$ exactly one of $p,q,r$ is zero, whence in $\mathbb{Z}$
$3|p,q\text{ or }r$ and so $n$ is only a $C_3N$, possibly, if $p=3$, in which case $n=561$, with $A=1$.

For $F=5$, (\ref{D15}) is $\phi^2-5\theta^2=-21$, but $x^2\equiv 5\pmod{21}$ has no solutions,
so neither has (\ref{D15}). Thus there are no $C_3N$'s with $A>G=\eta=1$ and $F<7$, and
Theorem~\ref{Th4_3}(b) follows in like manner to Theorem~\ref{Th4_3}(a) above.
\end{proof}

From (\ref{D14}) for a solution to (\ref{D15}) to yield a solution to (\ref{D13a}) 
we need $\phi\equiv-2\pmod{2F}$ and $\theta$ odd.
Clearly from (\ref{B10}) and (\ref{D9}) with $\eta=1$, $E=p-s=s^2+A\geq 2$, as required.
Also such a solution will result in positive integers $A, B, C, H, F$ which satisfy (\ref{B5a}),
whence we have $h:=\gcd(A,B,C)=\gcd(A,B)=\gcd(A,C)=\gcd(B,C)$;
and from (\ref{D8}) and (\ref{D12}) $s\equiv2\pmod h$, whence from (\ref{D13a}) $6\equiv 1\pmod h$,
so $h=1\text{ or }5$.
Then if $h=5$, from (\ref{D14}) we have $(\phi,\theta)\equiv (3,0)\pmod 5$;
so if $(\phi,\theta)\not\equiv (3,0)\pmod 5$ then $h=1$
 and our solution is acceptable.

Using the theory of Pell's equation, if $\phi_i\,^2-F\theta_i\,^2=k$, $x^2-Fy^2=1$ and
\begin{equation}\label{D16}\phi_{i+1}=(2Fy^2+1)\phi_i+2Fxy\theta_i\,,\qquad \theta_{i+1}=2xy\phi_i+(2Fy^2+1)\theta_i ,\end{equation}
then $\phi_{i+1}\,^2-F\theta_{i+1}\,^2=k$ and $\phi_{i+1}\equiv \phi_i\pmod{2F}$; so (\ref{D15}) has an infinite sequence
of acceptable solutions if it has a solution $(\phi_1,\theta_1)$ with
$\phi_1\equiv-2\pmod {2F}$ (unless, if possible, $(\phi_1,\theta_1)\equiv(3,0)$ and $y\equiv 0\pmod 5)$.
If $F=s^2+s+1$, as for the $K_3$-family $\{n_2(s)\}$ (equations (\ref{C4})) then $(\phi_1,\theta_1)=(2s(s+1),2s+1)$
is acceptable and gives rise to $n_2(s)$ with $A=1$ and $B=2p-S$.
Also there are acceptable solutions with $F\neq s^2+s+1$: for example for $F=87$, $(\phi_1,\theta_1)=(1912,205)$ and $A=11$.

For $F=3$, $s=1$, so $(\phi_1,\theta_1)=(4,3)$, which gives $n_2(1)=561$; and for $F=7$, $s=2$, $(\phi_1,\theta_1)=(12,5)$,
(\ref{D16}) is $\phi_{i+1}=127\phi_i+336\theta_i ,\ \theta_{i+1}=48\phi_i+127\theta_i$, and \hbox{$(\phi_2,\theta_2)=(3204,1211)$} leads
via~(\ref{D14}, \ref{D11}, \ref{D8}, \ref{D10}) to
$A=229$, $s=605$, \hbox{$H=1602$,} $p=366859$, $B=733113$, $q=1174447027$, $C=734324$, $r=1176387049$ and
$2p-S\bumpeq 733112.8118<B=733113<2p-S+\frac{1}{2\sqrt{7}}\bumpeq 733113.000737$ for the $K_3N$ $n=pqr$.

If $n(i)$ is the $K_3N$ arising from $(\phi_i,\theta_i)$ for $F=7$, it is easily shown that, for \hbox{$i\geq 2$,}
$n(i+1)\bumpeq 254^8\,n(i)\bumpeq 1.73\times 10^{19}\,n(i)$, and a naive ``probability'' estimate based
on the knowledge that $n(2)$ is not a $C_3N$ and an assumption of the independence
of the primality of $p,q\text{ and }r$ is :``$p$''$(n(i)$ is a $C_3N$ for some $i>2$)$\bumpeq\frac{1}{2000}$.

\begin{tabbing}
Challenges \=2(a): Find a $C_3N$ with $B>2p-\sqrt{p-\frac{3}{4}}+\frac{1}{2}+\frac{1}{4\sqrt{3}}$.\\
\>2(b): Find a $C_3N$ with $B>2p-\sqrt{p-\frac{3}{4}}+\frac{1}{2}$.
\end{tabbing}

From Theorem \ref{Th3_2}, $r'=\frac{1}{2}p\,'(p+1)^2$ is possible, so given $p$ we can have
\newline
$C=\frac{1}{2}(p+1)^2$.
Again, we show that this can be slightly exceeded:

\begin{theorem}\label{Th4_4}For any $K_3N$, $C\leq\frac{1}{2}(p\,^2+2p+\frac{1}{2}\sqrt{4p-3}+\frac{1}{2})$,
with equality if{}f $E=2$, $F=G=1$.
\end{theorem}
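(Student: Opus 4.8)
The plan is to compute $C$ in terms of the variables $E,F,G,H$ of \S\ref{Sec2d}, recalling that $P=p$ when $d=3$. From~(\ref{B19}) we have $B=\frac{p+E}{G}$, and feeding this into~(\ref{B16}) gives $C=\frac{Bp+A}{E}=\frac{p^2+Ep+AG}{GE}$; since $AG=\Delta$ by~(\ref{B20}) this reads $C=\frac{p}{G}+\frac{p^2+\Delta}{GE}$. So $C$ is largest when $G$ and $E$ are small, and the real work is to bound $A$ (equivalently $\Delta=AG$) in terms of $p$ in the critical regime. I would split on whether $G\ge 2$ or $G=1$.

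The case $G\ge 2$ is crude. By Theorem~\ref{Th2_2} (with $P=p$) we have $E\le p-1$, so $B=\frac{p+E}{G}\le p-1$; by Theorem~\ref{Th2_3}, $\Delta<2p-2$, so with $G\ge 2$ also $A\le p-2$; and $E\ge 2$. Hence $C=\frac{Bp+A}{E}\le\frac{(p-1)p+(p-2)}{2}=\frac{p^2-2}{2}$, strictly below the claimed bound, so equality is impossible here. For $G=1$, (\ref{B19}) gives $B=p+E$, whence $C=p+\frac{p^2+A}{E}$. The crucial extra fact is that $G=1$ turns~(\ref{B18}) into $H=AF-1$; combined with $p=AH+1$ this yields $p=A^2F-A+1$, so since $F\ge 1$ we get $A^2-A+1\le p$, i.e. $A\le\tfrac12\bigl(1+\sqrt{4p-3}\bigr)$, with equality if{}f $F=1$. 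If $E\ge 3$ then $C\le p+\frac{p^2+A}{3}$ with $A$ so bounded, and a one-line check gives $C<\tfrac12\bigl(p^2+2p+\tfrac12\sqrt{4p-3}+\tfrac12\bigr)$. If $E=2$ then $C=\frac{p^2+2p+A}{2}$, and substituting the bound on $A$ yields exactly $C\le\tfrac12\bigl(p^2+2p+\tfrac12\sqrt{4p-3}+\tfrac12\bigr)$, with equality if{}f $A=\tfrac12(1+\sqrt{4p-3})$, i.e. if{}f $F=1$ (and then $p=A^2-A+1$ automatically forces $\sqrt{4p-3}=2A-1\in\mathbb{Z}$, so the bound is genuinely attained). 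Collecting the cases, the inequality holds for every $K_3N$ and equality holds precisely when $G=1$, $E=2$ and $F=1$.

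The only genuinely non-mechanical point is the choice of how to bound $A$ in the $G=1$ case: Theorem~\ref{Th4_2} by itself is \emph{too weak} here, since it permits $A$ nearly $\sqrt{3p}$ whereas the extremal $C$ requires $A\approx\sqrt{p}$ — the large-$A$ configurations are simply not of the shape $G=1$, and it is the identity $p=A^2F-A+1$, special to $G=1$, that delivers the sharp constant $\tfrac12$ in front of $\sqrt{4p-3}$. Everything else reduces, after clearing denominators and isolating the surd $\sqrt{4p-3}$, to inequalities manifestly true for $p\ge 3$: for instance $\frac{p^2-2}{2}<\tfrac12(p^2+2p+\tfrac12\sqrt{4p-3}+\tfrac12)$ is immediate, and once $A\le\tfrac12(1+\sqrt{4p-3})$ is used, $p+\frac{p^2+A}{3}<\tfrac12(p^2+2p+\tfrac12\sqrt{4p-3}+\tfrac12)$ follows from $\frac{p^2}{6}+\frac{1+\sqrt{4p-3}}{12}>0$.
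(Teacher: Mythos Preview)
Your proof is correct and follows essentially the same route as the paper. The paper writes $C=\frac{p^2}{EG}+\frac{p}{G}+\frac{A}{E}$ and disposes of both $E\ge3$ and $G\ge2$ by the crude bound $A<p$ (getting $C<\tfrac12(p+1)^2$), then in the critical case $E=2,\ G=1$ solves $FA^2-A-p\,'=0$ for $A$ and shows the solution is decreasing in $F$; you organise the cases slightly differently (splitting first on $G$, then on $E$) and obtain the same bound on $A$ more directly from $p=A^2F-A+1\ge A^2-A+1$, but the key idea---that $G=1$ forces this quadratic relation and hence the sharp $A\le\tfrac12(1+\sqrt{4p-3})$---is identical.
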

\begin{proof}For any $K_3N$, $A<p$, and so from (\ref{B16}) and (\ref{B19}) we have
\begin{equation}\label{D17}C=\frac{p\,^2}{EG}+\frac{p}{G}+\frac{A}{E}<\frac{p\,^2}{EG}+\frac{p}{G}+\frac{p}{E}
\end{equation}
But $E\geq 2$ and $G\geq 1$, so if $E\geq 3$ then
$C<\frac{p\,^2}{3}+p+\frac{p}{3}=\frac{p(p+4)}{3}<\frac{1}{2}(p+1)^2$ for $p\geq 3$, and if $G\geq 2$ then
$C<\frac{p\,^2}{4}+\frac{p}{2}+\frac{p}{2}=\frac{p(p+4)}{4}<\frac{1}{2}(p+1)^2$;
thus $C<\frac{1}{2}(p+1)^2$ unless $E=2, G=1$ in which case, since $G=1$, from (\ref{D12}) $FA^2-A-p\,'=0$,
\begin{equation*}\text{whence  }A=\frac{1+\sqrt{4Fp\,'+1}}{2F}
=\frac{1}{2F}+\frac{1}{2\sqrt{F}}\sqrt{4p\,'+\frac{1}{F}}\leq\frac{1}{2}(1+\sqrt{4p-3})\text{, with equality}\end{equation*}
if{}f  $F=1$.
Hence from (\ref{D17}), $C\leq\frac{p\,^2}{2}+p+\frac{1}{4}(1+\sqrt{4p-3}),$ and Theorem~\ref{Th4_4} follows.
\end{proof}

For a $K_3$-family with equality in Theorem~\ref{Th4_4}, from (\ref{B18}), (\ref{B19}) and (\ref{B16}) we get
\begin{multline}\label{D18}A=A(H):=H+1,\quad B=AH+3=B(H):=H^2+H+3, \\C=C(H):=\frac{1}{2}\Bigl(H^4+2H^3+5H^2+5H+4\Bigr)\end{multline}
\begin{multline*}p=P(H):=H^2+H+1, \quad q=Q(H):=H^3+H^2+3H+1, \\r=R(H):=\frac{1}{2}\Bigl(H^5+2H^4+5H^3+5H^2+4H+2\Bigr)\quad\text{and}
\end{multline*}
\begin{equation}\label{D19}n=N(H):=\frac{1}{2}\Bigl(H^{10}+4H^9+14H^8+30H^7+53H^6+69H^5+71H^4+55H^3+31H^2+12H+2\Bigr)\end{equation}
With $H=2h$ this yields a $K_3$-family with parameter $h$, but from (\ref{D18}) if $H\equiv 2\pmod 3$
then $\gcd(A,B)=3$, so we take $H=6t$ or $H=6t+4$ to get two \hbox{$K_3$-families} with the required maximal $C$ property.
Also if $H\equiv 1 \pmod 3$ then $3|p$ and $3|q$, so for $C_3N$'s we must have the $K_3$-family with $H=6t$.
A search by Matthew Williams (see~\S\ref{Sec5a}) up to $t=1365$ found $C_3N$'s only for $t=1$ and \hbox{$t=210$,}
 giving $n=43 \cdot 271 \cdot 5827=67902031$, with $A=7, B=45, C=971$,
and \hbox{$n=1588861 \cdot 2001967381 \cdot 1590423947471521=5058896665381789187674264635361$,} with $A=1261, B=1588863, C=1262241228152$.

The above $K_3$-families will be shown to exemplify further bounds in Theorems \ref{D9} and \ref{D10}.
\subsection{An upper bound for $q$, given $p$ and $n$}\label{Sec4b}
\begin{theorem}\label{Th4_5}
For any $K_3N$, $q<\sqrt{\dfrac{n}{p}}-\sqrt{\dfrac{p}{12}}$.
\end{theorem}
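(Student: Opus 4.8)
The plan is to reduce the claim to one clean inequality and then feed in the bound on $A$ from Theorem~\ref{Th4_2}. Since $n=pqr$ we have $\sqrt{n/p}=\sqrt{qr}$, so, both sides being positive, $q<\sqrt{n/p}-\sqrt{p/12}$ is equivalent to $(q+\sqrt{p/12})^{2}<qr$; expanding and using $2\sqrt{p/12}=\sqrt{p/3}$, this is in turn equivalent to
\begin{equation*}q\Bigl(r-q-\sqrt{\tfrac{p}{3}}\Bigr)>\frac{p}{12}\,.\end{equation*}
So it suffices to show that $r-q$ exceeds $\sqrt{p/3}$, and by a margin which, multiplied by $q$, beats $p/12$.

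The key lower bound on $r-q$ comes from integrality. In the notation of \S\ref{Sec2}, $r-q=r'-q'=(C-B)H$, and since $r>q$ forces $C>B$ with $B,C$ positive integers, $r-q\geq H=p'/A$. By Theorem~\ref{Th4_2}, $A<\sqrt{3p'}-\tfrac{1}{2}\sqrt[4]{p'/12}$; expanding $p'/A$ as a geometric series then gives
\begin{equation*}H>\sqrt{\tfrac{p'}{3}}+c\,{p'}^{1/4}\,,\qquad c=\frac{1}{6\sqrt[4]{12}}>0\,.\end{equation*}
Since $\sqrt{p/3}-\sqrt{p'/3}=\dfrac{1}{\sqrt3\,(\sqrt p+\sqrt{p'})}<\dfrac{1}{2\sqrt{3p'}}$, we obtain $r-q-\sqrt{p/3}\geq H-\sqrt{p/3}>c\,{p'}^{1/4}-\dfrac{1}{2\sqrt{3p'}}$, which is positive once $p$ passes a small explicit threshold $p_{0}$.

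For $p\geq p_{0}$, using merely $q>p$, we then get $q\bigl(r-q-\sqrt{p/3}\bigr)>p\bigl(c\,{p'}^{1/4}-\tfrac{1}{2\sqrt{3p'}}\bigr)$, and a short computation confirms the right-hand side is at least $p/12$ for all $p\geq p_{0}$: the bracketed factor, of order $p^{1/4}$, exceeds $\tfrac{1}{12}$ once $p$ is moderately large. For the finitely many $K_{3}N$'s with $p<p_{0}$ — and there are finitely many, since Theorem~\ref{Th3_3} bounds $n$ in terms of $p$ — the inequality is checked directly; here it is convenient that $H$ is always even (every $p_{i}$ being odd), so that $H\geq2$ already disposes of the smallest values of $p$.

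The main obstacle is the closeness of $p$ and $p'=p-1$: the estimate $r-q\geq H$ beats $\sqrt{p/3}$ by only a term of order $p^{1/4}$, and that term comes entirely from the \emph{second}-order part of the bound in Theorem~\ref{Th4_2} (equivalently, from $A=p'/H$ with $H$ a positive even integer). Tracking this gain carefully — and keeping $p_{0}$ small enough that the residual finite verification is painless — is where the real work lies. A purely ``continuous'' estimate is not enough: writing $r-q=p'\theta/\Delta$ and using only $\Delta=AG<2p'$ (Theorem~\ref{Th2_3}) together with $\theta\geq3$ gives merely a constant lower bound for $r-q$, which fails for all but small $p$.
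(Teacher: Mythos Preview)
Your approach is correct and genuinely different from the paper's. Both proofs begin with $r-q\geq H$, but diverge immediately afterwards.

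The paper solves the quadratic $pq^{2}+pHq-n\leq 0$ to obtain $q\leq\sqrt{n/p+H^{2}/4}-H/2$, and then shows this sits below $\sqrt{n/p}-\sqrt{p/12}$ by reducing to the inequality $1<4\sqrt{BC}\bigl(1-\sqrt{A/(3H)+1/(3H^{2})}\bigr)$ and splitting into the cases $A/H<13/6$ (which needs only $B\geq2$, $C\geq3$, $H\geq2$) and $A/H\geq13/6$ (the big-$A$ case, where Theorem~\ref{Th4_1} is invoked directly). Your route is more direct: you rewrite the target as $q(r-q-\sqrt{p/3})>p/12$ and feed in Theorem~\ref{Th4_2} to get $H>\sqrt{p'/3}+c\,(p')^{1/4}$; together with $q>p$ this gives the result for all $p\geq13$ (indeed your quantity $c\,(p')^{1/4}-\tfrac{1}{2\sqrt{3p'}}$ equals exactly $\tfrac{1}{12}$ at $p=13$, so $p_{0}=13$ works), and for $p\in\{3,5,7,11\}$ the bare bound $H\geq2$ already suffices, since $2-\sqrt{p/3}>0$ and $q\geq p+2$ make $q(r-q-\sqrt{p/3})\geq(p+2)(2-\sqrt{p/3})>p/12$ by inspection. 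Your argument is cleaner and avoids the $\sqrt{BC}$ manipulation and the case split, at the cost of invoking the heavier Theorem~\ref{Th4_2} (hence the full strength of Theorem~\ref{Th4_1}) uniformly, whereas the paper's version is essentially self-contained in the generic regime $A/H<13/6$ and only needs Theorem~\ref{Th4_1} for the rare big-$A$ numbers.
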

\begin{proof}Since $r\geq q+H,$\ we have $q=\dfrac{n}{pr}\leq\dfrac{n}{p(q+H)},$
whence $pq^2+pHq-n\leq 0$
and $q\leq\sqrt{\dfrac{n}{p}+\dfrac{H^2}{4}}-\dfrac{H}{2}$.
\qquad Now \(\sqrt{\dfrac{n}{p}+\dfrac{H^2}{4}}-\dfrac{H}{2}<\sqrt{\dfrac{n}{p}}-\sqrt{\dfrac{p}{12}}\)
\newline
if{}f\quad\(\sqrt{\dfrac{n}{p}+\dfrac{H^2}{4}}-\sqrt{\dfrac{n}{p}}<\dfrac{H}{2}-\sqrt{\dfrac{p}{12}}
=\dfrac{1}{2}\Bigl(H-\sqrt{\dfrac{AH+1}{3}}\Bigr)\);
\newline
also\quad \(\sqrt{\dfrac{n}{p}+\dfrac{H^2}{4}}-\sqrt{\dfrac{n}{p}}
=\dfrac{H^2}{4\Bigl(\sqrt{qr+\frac{H^2}{4}}+\sqrt{qr}\Bigr)}<\dfrac{H^2}{8\sqrt{qr}}<\dfrac{H^2}{8\sqrt{q\,'r'}}
=\dfrac{H}{8\sqrt{BC}},\)
so~Theorem~\ref{Th4_5} follows if we can show that
\(\dfrac{H}{8\sqrt{BC}}<\dfrac{1}{2}\Bigl(H-\sqrt{\dfrac{AH+1}{3}}\Bigr),\)
i.e.
\begin{equation}\label{D20}1<4\sqrt{BC}
\Bigl(1-\sqrt{\frac{A}{3H}+\frac{1}{3H^2}}\Bigr).\end{equation}
We consider two cases: (i) if $\frac{A}{H}<\frac{13}{6}$, then since $B\geq 2, C\geq 3\text{ and }H\geq 2$,
\begin{equation*}\Bigl(1-\frac{1}{4\sqrt{BC}}\Bigr)^2-\frac{1}{3H^2} \geq \Bigl(1-\frac{1}{4\sqrt{6}}\Bigr)^2-\frac{1}{12}
>\frac{13}{18}>\frac{A}{3H},\quad\text{whence (\ref{D20}) follows.}
\end{equation*}
(ii) If $\frac{A}{H}\geq \frac{13}{6}$, then $n$ is a big$A$-$K_3N$, $B>A\geq \frac{13H}{6}$ and $C=3H+\gamma>3H$
(since $\gamma>0$, see the proof of Theorem~\ref{Th4_1}, just after (\ref{D1})).
From Theorem~\ref{Th4_1},
\begin{multline*}A<3H-\sqrt{\frac{H}{2}},\quad\text{so}\quad \frac{A}{3H}+\frac{1}{3H^2}<1-\frac{1}{3\sqrt{2H}}+\frac{1}{3H^2}.
\quad\text{Now}\quad 1-(1-x)^{\frac{1}{2}}>\frac{1}{2}x
\\\text{for } 0<x<1.\text{ Hence }
4\sqrt{BC}\Bigl(1-\sqrt{\frac{A}{3H}+\frac{1}{3H^2}}\Bigr)
>4\sqrt{BC}\Bigl\{1-\Bigl[1-\Bigl(\frac{1}{3\sqrt{2H}}-\frac{1}{3H^2}\Bigr)\Bigr]^\frac{1}{2}\Bigr\}
\\>4\sqrt{\Bigl(\frac{13H}{6}\Bigr) \cdot 3H}\Bigl\{\frac{1}{2}\Bigl(\frac{1}{3\sqrt{2H}}-\frac{1}{3H^2}\Bigr)\Bigr\}=
\frac{\sqrt{26}}{3}\Bigl(\sqrt{\frac{H}{2}}-\frac{1}{H}\Bigr)>1\quad\text{for }\ H\geq 4,
\end{multline*}
which is (\ref{D20}); and for $H=2$, since $\frac{13H}{6}\leq A<3H-\sqrt{\frac{H}{2}}$,
we have $4\frac{1}{3}\leq A<5$, which is impossible, establishing the result.
\end{proof}
For the $C_3N$ $191 \cdot 421 \cdot 431$, Theorem~\ref{Th4_5} gives $q=421<421.981\bumpeq\sqrt{\frac{n}{p}}-\sqrt{\frac{p}{12}}$.

\subsection{Upper bounds given $n$ for $p, A, B, C$ and $ABC$}\label{Sec4c}
A cursory glance at a list of $C_3N$'s suggests that a substantially better bound than the $p<\sqrt[3]{n}$
given by Theorem~\ref{Th3_4} should be attainable.
That this is not so can be seen by considering the Chernick type $K_3$-families
discussed in \S\ref{Sec2b} with \hbox{$(A,B,C)=(2u-1,2u,2u+1)$.}
For this family 
$\sum A=6u, \sum AB=12u^2-1, ABC=2u(4u^2-1)$, so (\ref{B5a}) requires $H$ and $F$ satisfying 
$(12u^2-1)H+6u=2u(4u^2-1)F$. We see that for 
$u>1$, $F=6u^2-5$ and $H=H_o=4u(u^2-1)$ is the unique solution with $0<H_o<ABC$,
so the general solution is
\begin{equation}\label{D21}H=H_o+tABC=2u\{2(u^2-1)+(4u^2-1)t\}
=\frac{B}{2}\{B^2-4+2(B^2-1)t\}.\end{equation}
In terms of $B=2u$, this gives \begin{align*}p_f:=p_f(u,t):&=\frac{1}{2}B(B-1)\{B^2-4+2(B^2-1)t\}+1,\\
q_f:=q_f(u,t):&=\frac{1}{2}B^2\{B^2-4+2(B^2-1)t\}+1,\\
r_f:=r_f(u,t):&=\frac{1}{2}B(B+1)\{B^2-4+2(B^2-1)t\}+1,
\end{align*}
and we have the two parameter system of $K_3$-families $n_f(u,t):=p_fq_fr_f$ for $u=1, t\geq1$ and $u>1, t\geq 0.$
If now we arbitrarily describe any $KN$ with $\frac{r}{p}<1.5$ as ``flat'',
then $n_f(u,t)$ gives flat $K_3N$'s for $u\geq 3,$ since in general
$\frac{r}{p}=\frac{CH+1}{AH+1}=\frac{C}{A}-\frac{C-A}{Ap}<\frac{C}{A},$
giving $\frac{r}{p}=1+\frac{2}{B-1}-\frac{2}{(B-1)p}<1.4$ for $B\geq 6$;
this also shows that for any Chernick type $K_3$-family the first member is the
``flattest'', since \hbox{``steepness''$:=\frac{r}{p}$} increases with $p$ (and hence $t$).
In \cite{Pinch} Pinch says that for $CN$'s up to $10^{15}$ the largest value of $p$ occurring is
$72931$, dividing $651693055693681=72931 \cdot 87517 \cdot 102103$; this is $n_f(3,69)$.
Gordon Davies did a cursory computer search which found the $C_3N$'s $n_f(5,3), n_f(8,0), n_f(17,1), n_f(51,0)$
and the very flat $n_f(102,0)=861618073 \cdot 865862497 \cdot 870106921=649136982888522736355512801,$
with ``steepness''$\bumpeq 1.00985.$ Clearly any improvement on $p<\sqrt[3]{n}$ will be of the form
$p<(1-o(1))\sqrt[3]{n},$ with $o(1)>0$, and we now show that
\begin{theorem}\label{Th4_6}For any $K_3N$,
\begin{equation*}p\leq \Bigl\lceil\sqrt[3]{n}-\frac{4\sqrt{3}}{9}\sqrt[6]{n}\Bigr\rceil .\end{equation*}
\end{theorem}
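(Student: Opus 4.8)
The plan is to rewrite $n$ in terms of the Chernick variables, reduce the claim to one explicit inequality in $p$, and verify it; the only genuinely arithmetic ingredient is a lower bound for $(B-A)+(C-A)$, which splits according to whether $n$ is a big$A$-$K_3N$. First put $p=AH+1$ and, since $A<B<C$, set $b_1:=B-A\ge 1$, $b_2:=C-A\ge 2$ and $t:=b_1+b_2\ge 3$; then $q=BH+1=p+b_1H$ and $r=p+b_2H$, so
\begin{equation*}n=p(p+b_1H)(p+b_2H)=p^3+tHp^2+b_1b_2H^2p.\end{equation*}

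The crucial estimate is $t^2H>\tfrac{16}{3}A$, from which $(tH)^2=t^2H\cdot H>\tfrac{16}{3}AH=\tfrac{16}{3}(p-1)$ and hence $tH>\tfrac{4\sqrt3}{3}\sqrt{p-1}$. To prove $t^2H>\tfrac{16}{3}A$ I would distinguish two cases, using $A<3H$ throughout (from Theorem \ref{Th2_3}, since $AF=G+H<3H$ and $F\ge 1$). If $n$ is not a big$A$-$K_3N$ then $A<\tfrac32H$, so $t^2H\ge 9H>8H>\tfrac{16}{3}A$. If $n$ is a big$A$-$K_3N$ then, in the notation of Theorem \ref{Th4_1}, $t=(B-A)+(C-A)=S+3a$; since $S\ge 1$ (a big$A$-$K_3N$ has $S>0$) and $a\ge 3$ (no big$A$-$K_3N$ has $a\le 2$, by the proof of Theorem \ref{Th4_1}), we get $t\ge 10$, whence $t^2H\ge 100H>16H>\tfrac{16}{3}A$.

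For the remaining term, $A<3H$ gives $A^2<3AH=3(p-1)$, so $H^2=(p-1)^2/A^2>\tfrac13(p-1)$ and therefore $b_1b_2H^2p\ge 2H^2p>\tfrac23p(p-1)$. Combining with the bound on $tH$,
\begin{equation*}n>p^3+\tfrac{4\sqrt3}{3}p^2\sqrt{p-1}+\tfrac23p(p-1)=:\Psi(p).\end{equation*}
Since $x\mapsto x^{1/3}-\tfrac{4\sqrt3}{9}x^{1/6}$ is increasing for $x\ge 1$, it then suffices to check $\Psi(p)^{1/3}-\tfrac{4\sqrt3}{9}\Psi(p)^{1/6}>p-1$; this yields $n^{1/3}-\tfrac{4\sqrt3}{9}n^{1/6}>p-1$, and since $p$ is an integer, $p\le\bigl\lceil n^{1/3}-\tfrac{4\sqrt3}{9}n^{1/6}\bigr\rceil$. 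The inequality $\Psi(p)^{1/3}-\tfrac{4\sqrt3}{9}\Psi(p)^{1/6}>p-1$ is routine: a binomial expansion shows its left side equals $p-\tfrac23+O(p^{-1/2})$, so it holds for all large $p$, and the finitely many small $p$ (in practice only $p=3$) are checked directly.

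The hard part is the bookkeeping in this last step, and in particular the appearance of the ceiling: the inequality $p\le n^{1/3}-\tfrac{4\sqrt3}{9}n^{1/6}$ \emph{without} the ceiling is false in general, since the left side of the verified inequality tends to $p-\tfrac23$ rather than to $p$, so one must be careful to prove the ``$>p-1$'' form and track the cube-root/binomial estimates to that precision. A lesser difficulty is that the big$A$ case of the key estimate imports facts about big$A$-$K_3N$'s ($S\ge 1$, $a\ge 3$, and $t=S+3a$) from the proof and discussion of Theorem \ref{Th4_1}.
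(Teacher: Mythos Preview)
Your proof is correct and reaches the same conclusion, but it follows a genuinely different route from the paper's. Both proofs start from $n=p(p+kH)(p+lH)$ with $k=B-A$, $l=C-A$, and both end with a power-series/binomial inversion; the divergence is in how the middle inequality is obtained.

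The paper bounds $kH$ and $lH$ \emph{separately}: from $A<3H-1$ (Theorem~\ref{Th4_1}) it gets $H>\sqrt{p/3}$, whence $kH\ge H>\sqrt{p/3}$, and then shows $lH>\sqrt{3p}$. The latter is immediate for $l\ge 3$ but requires a special argument when $l=2$, handled by observing that $l=2$ forces $n$ into the explicit $n_f(u,t)$ family of~(\ref{D21}) and checking $2H>\sqrt{3p}$ there directly. This yields $n>p\bigl(p+\sqrt{p/3}\bigr)\bigl(p+\sqrt{3p}\bigr)$, which is then inverted via the substitution $n=27y^6$, $V=3v^2$ to obtain $V=\sqrt[3]{n}-\tfrac{4\sqrt3}{9}\sqrt[6]{n}+\tfrac59-\cdots$ with $p<V$.

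You instead bound the \emph{sum} $tH=(k+l)H$ in one stroke via $t^2H>\tfrac{16}{3}A$, which gives $tH>\tfrac{4\sqrt3}{3}\sqrt{p-1}$ (the same leading constant, since $\sqrt{1/3}+\sqrt{3}=\tfrac{4}{\sqrt3}$). Your case split is different: instead of $l=2$ versus $l\ge 3$, you split on whether $n$ is a big$A$-$K_3N$, importing $a\ge 3$, $S\ge 1$ and $t=S+3a\ge 10$ from the proof and discussion of Theorem~\ref{Th4_1}. This trades one special case for another of comparable depth. Your treatment of the third term ($b_1b_2H^2p>\tfrac23p(p-1)$) is slightly weaker than the paper's ($kH\cdot lH>p$), but both suffice since the constant term in the final expansion is below~$1$ either way (yours gives roughly $p-\tfrac23$, the paper's gives $p-\tfrac59+\cdots$). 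The concluding binomial step and the passage to the ceiling are essentially identical in spirit.
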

\begin{proof}
If $k:=B-A$ and $l:=C-A$, for any $K_3N$ we have $n=p(p+kH)(p+l H).$
From Theorem~\ref{Th4_1}, $A<3H-1$ so 
\begin{equation}\label{D22}
 H^2>\frac{AH+H}{3}>\frac{AH+1}{3}=\frac{p}{3},
\end{equation}
 and hence
$kH\geq H>\sqrt{\frac {p}{3}}$ and for $l\geq 3$, $l H>3\sqrt{\frac{p}{3}}$,
while if $l=2$ we have $n\in\{n_f(u,t)\}$, and $l H=2H>3\sqrt{\frac{p}{3}}$
is equivalent to $4H^2>3p=3AH+3$, i.e. to $H\{4H-3(B-1)\}>3$, which is obvious from~(\ref{D21});
so in all cases $l H>3\sqrt{\frac{p}{3}}=\sqrt{3p}.$
So for any $K_3N$, $n>p(p+\sqrt{\frac{p}{3}})(p+\sqrt{3p})$, and if $n=V(V+\sqrt{\frac{V}{3}})(V+\sqrt{3V})$,
then $p<V$. 
Using the approach of Theorem~\ref{Th3_5} to express $V$ in terms of $n$ as a power series, if 
we put $n=27y^6$, $V=3v^2$, $v=y+\sum_{i=0}^{\,\infty}a_iy^{-i}$ and equate coefficients of $y^5$, $y^4$ and $y^3$,
we get $a_o=-\frac{2}{9}, a_1=\frac{11}{162}, a_2=-\frac{52}{2187}$
and $V=3y^2-\frac{4}{3}y+\frac{5}{9}-\frac{170}{729y}+\ldots $,
whence $V=\sqrt[3]{n}-\frac{4\sqrt{3}}{9}\sqrt[6]{n}+\frac{5}{9}-\frac{170\sqrt{3}}{729\sqrt[6]{n}}+\ldots$
But $p<V$  and $p$ is an integer, so Theorem~\ref{Th4_6} follows.
\end{proof}

From~(\ref{D6}) for $n^*$ we get $p=547<\lceil\sqrt[3]{n}-\frac{4\sqrt{3}}{9}\sqrt[6]{n}\,\rceil=574$,
and clearly a marginally lower bound than that given by Theorem~\ref{Th4_6} could readily be established.
I speculate that for large $n$ there is an upper bound $\sqrt[3]{n}-\mu\sqrt[4]{n}(1+o(1))$ for some $\mu>0$;
such a bound can be established for the system $n_f(u,t)$ with $\mu=\frac{1}{\sqrt[4]{2}}$.
But $n^*$ (see (\ref{D6})) is both slightly flatter and much smaller than $n_f(5,0)$: if (using the \hbox{notation}
of the proof of Theorem~\ref{Th4_1}) there were an infinite system of big$A$-$K_3N$'s with
$(\alpha,\beta,\gamma)=(-\sqrt{\frac{H}{2}},1,\sqrt{\frac{H}{2}})$, from (\ref{B6}) we can deduce that it would require
$\mu=\frac{1}{\sqrt[4]{108}}$, suggesting a conjectural upper bound as in Challenge 3 below. 
But possibly there lurk yet flatter $K_3N$'s capable of defeating any $\mu>0$.

Challenge 3: Find a $K_3N$ with $p>\lfloor\sqrt[3]{n}-\sqrt[4]{\frac{n}{108}}-\frac{5}{18}\sqrt[6]{\frac{n}{27}}
+\frac{5}{72}\sqrt[12]{\frac{n}{216}}\rfloor$, or prove that none exists.

\begin{theorem}\label{Th4_7}
For any $K_3N$, 
\begin{equation*}A<\sqrt{3}\sqrt[6]{n}-\frac{1}{2\sqrt[4]{12}}\sqrt[12]{n}\end{equation*}
\end{theorem}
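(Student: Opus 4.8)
The plan is to derive Theorem~\ref{Th4_7} by feeding the trivial size bound $p<\sqrt[3]{n}$ into Theorem~\ref{Th4_2}. Theorem~\ref{Th4_2} already supplies, for every $K_3N$, the inequality $A<\sqrt{3p\,'}-\frac{1}{2}\sqrt[4]{p\,'/12}$ (in the paper's notation $p\,'=p-1$), so everything reduces to bounding that right-hand expression in terms of $n$. Since $p<q<r$ we have $p\,^3<pqr=n$, hence $p<\sqrt[3]{n}$ --- this is exactly the case $d=3$ of Theorem~\ref{Th3_4} --- and therefore $p\,'<p<\sqrt[3]{n}$.

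The one genuine verification is monotonicity. Put $g(x):=\sqrt{3x}-\frac{1}{2}\sqrt[4]{x/12}$. Then $g'(x)=\frac{1}{2\sqrt x}\bigl(\sqrt3-\frac{1}{4\sqrt[4]{12}}\,x^{-1/4}\bigr)$, which is positive for all $x\geq 1$ (indeed for all $x>1/27648$), so $g$ is strictly increasing on that range; and every $K_3N$ satisfies $p\geq 3$, so $p\,'\geq 2$, while $n\geq 561$ gives $\sqrt[3]{n}>8$, so both arguments $p\,'$ and $\sqrt[3]{n}$ sit comfortably inside the increasing range. Combining $A<g(p\,')$ from Theorem~\ref{Th4_2} with $g(p\,')<g(\sqrt[3]{n})$ (monotonicity together with the strict inequality $p\,'<\sqrt[3]{n}$), we obtain $A<g(\sqrt[3]{n})$; and $g(\sqrt[3]{n})=\sqrt{3\sqrt[3]{n}}-\frac{1}{2}\sqrt[4]{\sqrt[3]{n}/12}=\sqrt3\,\sqrt[6]{n}-\frac{1}{2\sqrt[4]{12}}\sqrt[12]{n}$, which is precisely the claimed bound.

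I do not anticipate a real obstacle: the argument is Theorem~\ref{Th4_2} plus a monotone substitution, and the only care needed is (i) to substitute the strict bound $p\,'<\sqrt[3]{n}$ (not merely $p\leq\sqrt[3]{n}$) so that no equality case arises, and (ii) to check that the arguments of $g$ lie where $g$ increases, which the coarse inequalities $p\geq 3$ and $n\geq 561$ dispatch at once. Note that the weak estimate $p<\sqrt[3]{n}$ is all that is required --- the correction term $-\frac{1}{2\sqrt[4]{12}}\sqrt[12]{n}$ in Theorem~\ref{Th4_7} is simply the $-\frac{1}{2}\sqrt[4]{p\,'/12}$ term of Theorem~\ref{Th4_2} with $p\,'$ replaced by $\sqrt[3]{n}$ --- so quoting the much sharper $p<\sqrt[3]{n}-\frac{4\sqrt3}{9}\sqrt[6]{n}$ of Theorem~\ref{Th4_6} would be unnecessary here.
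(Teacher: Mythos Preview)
Your proof is correct and follows essentially the same route as the paper: combine Theorem~\ref{Th4_2} with $p\,'<p<\sqrt[3]{n}$ and the monotonicity of $x\mapsto\sqrt{3x}-\tfrac{1}{2}\sqrt[4]{x/12}$. The paper cites Theorem~\ref{Th4_6} rather than Theorem~\ref{Th3_4} but, as you yourself note, immediately weakens it to $p<\sqrt[3]{n}$, so the substance is identical; you simply make the monotonicity check explicit where the paper leaves it implicit.
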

\begin{proof}From Theorem~\ref{Th4_2}, $A<\sqrt{3p}-\frac{1}{2}\sqrt[4]{\frac{p}{12}},$
and from Theorem~\ref{Th4_6} we have $p<\sqrt[3]{n}-\frac{4\sqrt{3}}{9}\sqrt[6]{n}+1<\sqrt[3]{n},$ 
giving the result at once.
\end{proof}
Clearly a slightly tighter bound could easily be found.
For $n^*$ (see \ref{D6}) this gives $A=39<\sqrt{3}\sqrt[6]{n}-\frac{1}{2\sqrt[4]{12}}\sqrt[12]{n}\bumpeq 40.81.$

If $B^\dagger(n)$ is to be a bound for all $K_3N$'s for $B$ 
given $n$ for which $\inf(B^\dagger(n)-B)=0$, 
then $B^\dagger(n)$ as in Theorem~\ref{Th4_8} (a) below suffices.
But if we are content with $\inf(\frac{B^\dagger(n)}{B})=1$ 
then we could much more easily establish $B^\dagger(n)=\sqrt{2}\sqrt[4]{n}$. 
However, we outline a proof of 

\begin{theorem}\label{Th4_8} 
(a) For any $K_3N$, $B<\sqrt{2}\sqrt[4]{n}-\Bigl(\sqrt{3}-\dfrac{1}{2}\Bigr)\sqrt[8]{\dfrac{n}{4}}
+\dfrac{39+\sqrt{3}}{24}$

(b) For any $C_3N$ except 6601, $B<\sqrt{2}\sqrt[4]{n}-\Bigl(\sqrt{7}-\dfrac{1}{2}\Bigr)\sqrt[8]{\dfrac{n}{4}}
+\dfrac{175-3\sqrt{7}}{56}$
\end{theorem}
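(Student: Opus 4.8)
The plan is to follow the same template already used for Theorems~\ref{Th4_6} and \ref{Th4_7}: combine a bound for $B$ given $p$ (Theorem~\ref{Th4_3}) with a bound for $p$ given $n$ (Theorem~\ref{Th4_6}), and then massage the resulting expression into a power series in a fractional power of $n$. Concretely, from Theorem~\ref{Th4_3}(a) we have $B<2p-\sqrt{p-\tfrac34}+\tfrac{\sqrt3+1}{2\sqrt3}$, and from Theorem~\ref{Th4_6} (in the sharper form $p<\sqrt[3]{n}-\tfrac{4\sqrt3}{9}\sqrt[6]{n}+1$) we can substitute. Since the right-hand side of the $B$-bound is increasing in $p$, we get $B$ bounded by a function of $n$ alone; the remaining work is to expand $2p-\sqrt{p-\tfrac34}$ with $p=\sqrt[3]{n}-\tfrac{4\sqrt3}{9}\sqrt[6]{n}+O(1)$ and collect the two leading terms, which will produce the $\sqrt2\sqrt[4]{n}$ and $-(\sqrt3-\tfrac12)\sqrt[8]{n/4}$ shape after writing things in terms of $\sqrt[8]{n/4}$. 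For part~(b), one uses Theorem~\ref{Th4_3}(b), which gives the improved additive constant $\tfrac{\sqrt7+1}{2\sqrt7}$ (valid for all $C_3N$ except $6601$, exactly the exception quoted), and runs the identical expansion.

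There is, however, a subtlety that makes the naive substitution too lossy: plugging the bound $p<\sqrt[3]{n}$ into $-\sqrt{p-\tfrac34}$ makes that (negative) term \emph{less} negative, i.e.\ weakens the bound, so one cannot simply use $p<\sqrt[3]{n}$ for that term. The correct approach, as in the proof of Theorem~\ref{Th3_5}, is to treat $B$ as essentially $B=2p-\sqrt{p}+O(1)$ and show this composite quantity, viewed as an increasing function of $p$, is maximised by pushing $p$ up to its bound; but because $2p$ dominates $-\sqrt p$, the function $p\mapsto 2p-\sqrt{p-3/4}$ is genuinely increasing, so substituting the \emph{upper} bound for $p$ is valid after all, provided one is careful that $p$ is large enough that the derivative $2-\tfrac1{2\sqrt{p-3/4}}$ is positive --- which holds for all $p\geq 3$. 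So the monotonicity check is the one genuine logical step, and it is immediate.

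The main computational obstacle, and the reason the paper says ``we outline a proof,'' is pinning down the constant terms: one must carry the expansion of $p$ in powers of $n^{-1/6}$ to enough order (as in (\ref{C12})), substitute into $2p-\sqrt{p-3/4}+c$ where $c$ is the relevant additive constant from Theorem~\ref{Th4_3}, re-expand the square root by the binomial series, regroup everything in powers of $u:=\sqrt[8]{n/4}$, and then \emph{round up}: since $B$ is an integer strictly less than the series value, one replaces the tail by a suitable constant so that the truncated expression $\sqrt2 u^2-(\sqrt3-\tfrac12)u+\tfrac{39+\sqrt3}{24}$ (respectively with $\sqrt7$ and $\tfrac{175-3\sqrt7}{56}$) still strictly bounds $B$ for every $K_3N$ (resp.\ $C_3N\neq6601$). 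Verifying that the discarded lower-order terms do not spoil the inequality --- i.e.\ that the chosen constant is large enough to absorb the error for \emph{small} $n$ while the asymptotics handle large $n$ --- is exactly the bookkeeping that (\ref{D6}), the number $n^*$, is used to check in the analogous earlier theorems, and I would likewise verify the bound explicitly on $n^*$ and on the smallest relevant $K_3N$'s.

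Finally, I would record the extremal behaviour: the $q$-maximal family $\{n_2(s)\}$ of (\ref{C4}), for which $A=1$, $H=p'$ and $B=2p-S$ with $S=\sqrt{p-3/4}-\tfrac12$, shows that $B$ comes within $O(1)$ of $2p-\sqrt{p-3/4}$, hence within $O(1)$ of $\sqrt2\sqrt[4]{n}-\sqrt{\ }\,\sqrt[8]{n/4}$-type expressions, so $\inf(B^\dagger(n)-B)=0$ is attained along that family, confirming the claim made just before the theorem that part~(a) is essentially best possible; for part~(b) the acceptable Pell solutions with $F=7$ constructed after (\ref{D16}) play the corresponding role. The only place I expect real care rather than routine algebra is reconciling the additive constants $\tfrac{\sqrt3+1}{2\sqrt3}$ and $\tfrac{\sqrt7+1}{2\sqrt7}$ from Theorem~\ref{Th4_3} with the quoted constants $\tfrac{39+\sqrt3}{24}$ and $\tfrac{175-3\sqrt7}{56}$ after the change of variable to $u=\sqrt[8]{n/4}$, since the factor-of-$p$ multiplying $S$ in $B=2p-S$ forces the constant to pick up contributions from the $\tfrac7{20}$-type terms in the expansion of $p$.
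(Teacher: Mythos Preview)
Your plan has a fatal order-of-magnitude error. You propose to substitute the bound $p<\sqrt[3]{n}$ from Theorem~\ref{Th4_6} into the bound $B<2p-\sqrt{p-\tfrac34}+c$ from Theorem~\ref{Th4_3}. But that substitution yields $B<2\sqrt[3]{n}+O(\sqrt[6]{n})$, which is \emph{far weaker} than the claimed $B<\sqrt{2}\sqrt[4]{n}+O(\sqrt[8]{n})$; no amount of massaging the lower-order terms can turn $n^{1/3}$ into $n^{1/4}$. The two bounds you are combining are never nearly tight simultaneously: when $B$ is close to $2p$ (which forces $G=1$), the relations force $p\sim H^2$ and $n\sim 4H^8\sim 4p^4$, so $p\sim (n/4)^{1/4}$, nowhere near the $n^{1/3}$ allowed by Theorem~\ref{Th4_6}. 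Conversely, for the flat $K_3N$'s that make Theorem~\ref{Th4_6} nearly tight, $B$ is only of order $p^{1/2}$, not $2p$. Your template worked for Theorem~\ref{Th4_7} because the $A$-maximal and $p$-maximal configurations are compatible; here they are not.

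The paper's proof reflects exactly this tension: it splits into three cases according to the values of $G$ and $F$. For $G\geq 2$ one has $B<p$ directly, and a separate argument via $n'=ABCH(H^2+F)$ gives $n>B^4/3$, hence $B<\sqrt[4]{3n}$. For $G=1$, $F\geq 3$ one bounds $n$ below by $8p^4+O(p^3)$ and only then feeds in Theorem~\ref{Th4_3}. The delicate case is $G=F=1$, where both $B$ and $n$ are expressed as explicit polynomials in $H$ and $\eta$ (equations (\ref{D26})--(\ref{D27})); one shows $B$ decreases and $n$ increases in $\eta$, and then a Pell-equation analysis parallel to that after Theorem~\ref{Th4_3} pins down the minimal admissible $\eta$ (namely $\eta=3$ for $K_3N$'s and $\eta=7$ for $C_3N$'s other than $6601$), which is what produces the $\sqrt{3}$ and $\sqrt{7}$ in the statement. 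The final step compares $B(H,\eta)$ with $B_\eta(n(H,\eta))$ via truncated binomial expansions and a finite computer check for small $H$. None of this structure is recoverable from the composition $B<2p$, $p<n^{1/3}$.
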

\begin{proof} We define $B_\mu(n):=\sqrt{2}\sqrt[4]{n}-\Bigl(\sqrt{\mu}-\dfrac{1}{2}\Bigr)\sqrt[8]{\dfrac{n}{4}}
+\dfrac{3\mu^2-\mu\sqrt{\mu}+4\mu+4\sqrt{\mu}}{8\mu}$, and then 
(a) states $B<B_3(n)$ and (b) states $B<B_7(n)$. Also for any $K_£N$, $n\geq 561$, and so
$B_7(n)<B_3(n)$. Further, if we put $u=\sqrt[8]{\dfrac{n}{4}}$, we have
\begin{equation}\label{D23}
 B_\mu(n)=B^*_\mu(u):=2u^2-\Bigl(\sqrt{\mu}-\dfrac{1}{2}\Bigr)u+\dfrac{3\mu+4}{8}-\dfrac{\mu-4}{8\sqrt{\mu}}
\end{equation}
We note that for $\mu\leq7$, $B_\mu$ and $B_\mu^*$ are increasing functions for $n\geq 561$ 
(actually, for $\mu\leq 62.7$).

We consider three cases: (i) $G\geq 2$, (ii) $G=1,F\geq2$ and (iii) $G=F=1$.

(i) For $G\geq 2$, from Theorem~\ref{Th2_2} and (\ref{B19}), $B<p$. Also from (\ref{D22}) 
$H^2>\frac{p}{3}$, so $p\,'(H^2+F)>p\,'(\frac{p}{3}+1)=\frac{p^2+2p-3}{3}>\frac{p^2}{3}$ 
for $p\geq 3$. Hence, with (\ref{B6}) and since $C>B$, we have 
$n>n'=ABCH(H^2+F)>B^2p'(H^2+F)>\frac{B^2p^2}{3}>\frac{B^4}{3}$, so $B<\sqrt[4]{3n}$.
This immediately gives $B<\sqrt{2}\sqrt[4]{n}$, but the tighter bound $B_7(n)$ 
then requires $\sqrt[4]{3n}<B_7(n)$, which holds for $n>5.625 \times 10^9$; 
a computer check verified Theorem~\ref{D8} for $n<5.625 \times 10^9$.

(ii) For $G=1$ from (\ref{D11}) $F$ is odd, so $F\geq 3$, and $H=AF-1$, so $H^2=Fp\,'-H$.
Also $\eta\geq 1$ and from (\ref{B20}) $\frac{p\,'}{\Delta}=\frac{AH}{AG}=H\leq  p\,'$.
Hence, and from (\ref{B15}), we get
\begin{equation*}
\begin{split}
n& =p\{H^2(4p^2+\eta p)+H(5p+\eta-1)+1\}\geq p\{H^2(4p^2+p)+5Hp+1\}\\
& =p\{(Fp\,'-H)(4p^2+p)+5pH+1\}=p\{Fpp\,'(4p+1)-4pp\,'H+1\}\\
& \geq p\{3pp\,'(4p+1)-4pp\,'^2+1\}=8p^4-p^3-7p^2+p
\end{split}
\end{equation*}

So if we put $w:=w(p):=\sqrt[8]{\frac{1}{4}(8p^4-p^3-7p^2+p)}$ we have 
$w \leq u=\sqrt[8]{\frac{n}{4}}$, and with $\mu =7$ in (\ref{D23}) we get
$B^*_7(w)\leq B^*_7(u)=B_7(n)$ for $p \geq 3$. Also from Theorem~\ref{Th4_3} we 
have $B<B(p):=2p-\sqrt{p-\frac{3}{4}}+\frac{\sqrt{3}+1}{2\sqrt{3}}$. 
We now want $B(p)<B^*_7(w)$ for $p\geq 3$, i.e. $\beta(p):=B^*_7(w(p))-B(p)>0$.
The dominant term in $\beta(p)$ is $2(\sqrt[4]{2}-1)p$, so $\beta(p)>0$ 
for sufficiently large $p$, and $\beta(3)\bumpeq 0.596>0$; an outline of a general proof 
is as follows: using the substitution $p=p(v):=v^2+v+1$, we get from $w(p)$ above 
\begin{equation}\label{D24}\begin{split}
w&=w^\dagger(v):=w(p(v))\\
&=\sqrt[8]{\frac{1}{4}\Bigl(8v^8+32v^7+79v^6+125v^5+139v^4+107v^3+54v^2+16v+1\Bigr)}
\end{split}
\end{equation}
and also $B^\dagger_7(v):=B^*_7(w^\dagger(v))$, 
$B_m:=B_m(v):=2p(v)-\sqrt{p(v)-\frac{3}{4}}+\frac{1}{2}=2v^2+v+2$, and then 
\begin{equation}\label{D25}\begin{split}
 \beta^\dagger(v):=&\beta(p(v))=B^\dagger_7(v)-B_m(v)-\frac{1}{2\sqrt{3}}\\
=&2w^2-B_m-(\sqrt{7}-\frac{1}{2})w+\frac{175-3\sqrt{7}}{56}-\frac{1}{2\sqrt{3}}
\end{split}
\end{equation}
\begin{Lemma4A}
If $x>0$, $y>0$ and $x^4>y^4$, then $x>y$ and 
\begin{equation*}
x-y=\frac{x^4-y^4}{x^3+x^2y+xy^2+y^3}>\frac{x^4-y^4}{4x^3}
\end{equation*}.
\end{Lemma4A}
Applying this Lemma in (\ref{D25}) to $2w^2-B_m$, with (\ref{D24}) we get 
\begin{equation*}\begin{split}
2w^2-B_m>&\frac{16w^8-B_m^4}{4\cdot 8w^6}\\
=&\frac{16v^8+96v^7+228v^6+396v^5+411v^4+324v^3+128v^2+32v-12}
{4(32v^8+128v^7+316v^6+500v^5+556v^4+428v^3+216v^2+64v+4)^\frac{3}{4}}
\end{split}
\end{equation*}
Now for $v\geq 10$ we have $50v^8>16(w^\dagger(v))^8$, 
so 
\begin{equation*}
2w^2-B_m>\frac{4v^8+24v^7+57v^6}{50^\frac{3}{4}v^6}>\frac{1}{5}\Bigl(v^2+6v+\frac{57}{4}\Bigr), 
\end{equation*}
and hence from (\ref{D25})
\begin{equation*}\begin{split}
 \beta^\dagger(v)>\frac{1}{5}\Bigl(v^2+6v+\frac{57}{4}\Bigr)-\Bigl(\sqrt{7}-\frac{1}{2}\Bigr)w^\dagger(v)>
\frac{4v^2+24v+57}{20}-\Bigl(\sqrt{7}-\frac{1}{2}\Bigr)\sqrt[8]{\frac{50}{16}}v\\
>\frac{4v^2+24v+57}{20}-\frac{5}{2}v=\frac{4v^2-26v+57}{20}>0\quad\text{ for } v\geq 10
\end{split}
\end{equation*}

For $v=10$, $p=111$, and a computer check verified Theorem~\ref{Th4_8} for all $K_3N$'s with $3\leq p\leq 109$.

(iii) Let $n_o$ be a $K_3N$ with associated $K$-variables $H_o, A_o, p_o$, etc, with $F_o=G_o=1$.
Then from (\ref{B18}) $A_o=H_o+1$, so $p_o=H_o^2+H_o+1$, from (\ref{B20})
$\Delta_o=A_oG_o=H_o+1$, from (\ref{B14a}) 
$B_o=2p_o+\frac{\eta_o-\theta_o}{2}$, and $\Delta_o<P_o$, so the conditions needed for the proof of Theorem~\ref{Th3_1} 
and case (ii) of Theorem~\ref{Th3_5} are met, and in a similar manner we define real variables $n,p,A,B$ etc 
in terms of independent real variables $H$ and $\eta$. 
Then, keeping $H=H_o$, constant, we get from the above with (\ref{B13a}), (\ref{B14a}) and (\ref{B15})
\begin{equation}\label{D26}
 B=B(H,\eta):=2H^2+2H+2+\frac{1}{2}(\eta-\sqrt{4\eta H^2+4\eta'(H+1)+\eta^2})
\end{equation}
and
\begin{equation}\label{D27}\begin{split}
 n=n(H,\eta):=4H^8+&12H^7+(24+\eta)H^6+(33+2\eta)H^5+(34+3\eta)H^4\\
+&(26+3\eta)H^3+(14+2\eta)H^2+(5+\eta)H+1
\end{split}
\end{equation}
Then as for Theorems~\ref{Th3_1} and \ref{Th3_5} we have that as $\eta$ increases, so $B$ decreases and $n$ increases, 
so if $1\leq \eta_a\leq \eta_o$, then $B(H_o,\eta_o)\leq B(H_o,\eta_a)$ and $n(H_o,\eta_o)\geq n(H_o,\eta_a)$. 
So for the best possible bound we want to choose $\eta_a$ to be the smallest possible $\eta_o$ 
(we already know from the proof of Theorem~\ref{D3} that $\eta_o=1=F_o=G_o$ is impossible for $K_3N$'s). 
Dropping the zero suffixes for our $K_3N$, from (\ref{B12}) we have $s^2+\eta s=\eta(H^2+H+1)-(H+1)=\eta H^2+\eta' H+\eta'$ 
and with (\ref{B13b}) we get
\begin{equation}\label{D28}
 \phi:=2\eta H+\eta-1\quad \text{ and } \quad \theta=2s+\eta\quad\text{ and then}
\end{equation}
\begin{equation}\label{D29}
 \phi^2-\eta\theta^2=-(\eta^3+3\eta^2-2\eta-1)
\end{equation}
Compare this with (\ref{D14}) and (\ref{D15}) and the accompanying discussion of Pellian solutions, 
which we now apply to (\ref{D28}) and (\ref{D29}). We define \emph{acceptable} 
solutions to (\ref{D29}) in the same way as for (\ref{D15}), and for a solution to be \emph{admissible} 
we require $\theta-\eta$ even, and $\phi\equiv\eta -1 \pmod {4\eta}$ since in (\ref{D28}) we have $H$ even. 
Henceforth replacing $F$ with $\eta$ in (\ref{D16}) with $x^2-\eta y^2=1$, if $\eta$ is not a perfect square 
and (\ref{D29}) has a solution, then (\ref{D16}) gives an infinity of further solutions.
We find that there are no solutions for $\eta=1,2\text{ or }5$; for $\eta=4$ there is the unique solution 
$\phi=51, \theta=26$ which leads to the $K_3N$ $43\cdot 451\cdot 607=11771551$ with $B=75$; while for each 
$\eta\in\{3,6,7\}$ there are four fundamental solutions from which all other solutions can be derived 
via (\ref{D16}), but the only admissible ones are $\phi=14, \theta=9$ for $\eta=3$ and 
$\phi=90, \theta=35$ for $\eta=7$, for which $\phi^2-7\theta^2=-475$ reduces to $X^2-7Y^2=-19$ 
via $\phi=5X, \theta=5Y$.

With reference to (\ref{D16}), amended as above, we need to show that for $\eta\leq 7$ 
any solution $(\phi_i,\theta_i)$ is admissible if{}f its fundamental solution $(\phi_1,\theta_1)$ 
is admissible.
From (\ref{D16}) $\theta_{i+1}\equiv \theta_i \pmod 2$, so $(\theta_{i+1}-\eta)$ is even if{}f 
$(\theta_i-\eta)$ is even.
Also the inverse transformation for (\ref{D16}) has $\phi_i=(2\eta y^2+1)\phi_{i+1}-2\eta xy\theta_{i+1}$, 
and we have $\eta y^2=(x-1)(x+1)$, so if $xy$ is odd then $8\mid\eta$, whence $xy$ is even for $\eta\leq 7$. 
Working now in $\mathbb{Z}_{4\eta}$ with $\eta\leq 7$, if $y$ is even then $\phi_{i+1}=\phi_i$, 
and if $\eta$ is even then $x$ is odd and $y$ is even. 
But if odd $\eta=2\nu +1$ and $y$ is odd, suppose $\phi_i=\eta-1$; then 
$\phi_{i+1}=(2\eta y^2+1)\phi_i=(2\eta+1)(\eta-1)=2\nu(2\eta+1)=4\nu\eta+\eta-1=\eta-1$;
conversely by the inverse transformation $\phi_i=(2\eta y^2+1)\phi_{i+1}$, so $\phi_{i+1}=\eta-1$ 
if{}f $\phi_i=\eta-1$, whence by induction $\phi_i=\eta-1$ if{}f $\phi_1=\eta-1$. 
Thus as required $(\phi_i,\theta_i)$ is admissible if{}f $(\phi_1,\theta_1)$ is admissible. 
Also for acceptibility we require $E\geq2$: it is easily shown that $E=F=G=1$ gives rise to 
$\eta=H^4+2H^3+H^2+H+1$ (cf~(\ref{D18}) and Theorem~\ref{Th4_4}), so $\eta=39$ (with $H=2$) 
is the least $\eta$ with an admissible solution with $E=1$; thus $E\geq2$ for $\eta\leq7$.

With $\eta=3$, the solution $\phi=14, \theta=9$ gives the $C_3N\quad 7\cdot 23\cdot 41=6601$, 
but this is the only $C_3N$, since we find in $\mathbb{Z}_7$ that the cycles given by (\ref{D16}) 
are of period 4, and in $\mathbb{Z}$ with obvious notation 
$\gcd(A_{4i+2},B_{4i+2},C_{4i+2})=7$, not acceptable, 
and $p_{4i+1}\equiv p_{4i+3}\equiv q_{4i}\equiv r_{4i}\equiv 0 \pmod 7$.
This is sufficient for our proof, but for $\eta=7$ the solution $\phi=90, \theta=35$ gives the 
$C_3N\quad 43\cdot 433\cdot 643=11972017$ with $B=72$, and there seems to be the \emph{possibility} 
of further $C_3N$'s in the sequence of $K_3N$'s generated by (\ref{D16}).

It remains to show that for $\eta=3\text{ or }7$ and any even $H\geq 2$, $B(H,\eta)<B_\eta(n(H,\eta))$. 
If we write (\ref{D26}) as $B=2H^2+2H+\frac{4+\eta}{2}-\sqrt{\eta}H(1+x(H,\eta))^\frac{1}{2}$ 
and (\ref{D27}) as $\frac{n}{4}=H^8(1+y(H,\eta))$, then for large $H$ with $x:=x(H,\eta)$ and 
$y:=y(H,\eta)$ we have $x=O(\frac{1}{H}), y=O(\frac{1}{H})$ and $B_\eta(n)=2H^2(1+y)^\frac{1}{4}-H(\sqrt{\eta}-\frac{1}{2})(1+y)^\frac{1}{8}
+\frac{3\eta+4}{8}-\frac{\eta-4}{8\sqrt{\eta}}$. 
Using the binomial series and omitting the complicated details we get 
\begin{equation*}
 B_\eta(n)-B=\Bigl(\frac{3\eta\sqrt{\eta}}{32}-\frac{\eta}{64}+\frac{15\sqrt{\eta}}{128}+\frac{117}{256}
-\frac{1}{4\sqrt{\eta}}-\frac{1}{8\eta\sqrt{\eta}}\Bigr)\frac{1}{H}+O\Bigl(\frac{1}{H^2}\Bigr)>0
\end{equation*}
for sufficiently large H.
To prove the result rigorously for all even $H=H_o$, we can truncate the various binomial series 
and approximate $x$ and $y$ to form functions $b_\eta(n(\eta,H))=b_\eta(n)$ and $b(\eta,H)=b$ 
such that $B_\eta(n)>b_\eta(n)$ and $b>B$, with $b_\eta(n)$ and $b$ containing a relatively 
small number of terms all of which are retained, whence for each $\eta$ we can determine a precise 
$H^*(\eta)$ such that $b_\eta(n)-b>0$ for $H\geq H^*(\eta)$. 
My method of truncation and approximation was as arithmetically economical as I could make it, 
subject to retaining exactly the above term in $\frac{1}{H}$, and after heavy detail arrived at 
$H^*(3)=240$ and $H^*(7)=66$; Gordon Davies (see \S\ref{Sec5a}) did the computer verifications 
for $2\leq H\leq H^*(\eta)$.

Thus for any $K_3N$ with $F_o=G_o=1$, we have $\eta_o\geq3$ and for any $C_3N$ except 6601 
we have $\eta_o\geq 7$, so with $\eta_a=3 \text{ or } 7$ as appropriate we have $\eta_o\geq\eta_a$ and 
$B_o=B(H_o,\eta_o)\leq B(H_o,\eta_a)<B_{\eta_a}(n(H_o,\eta_a))\leq B_{\eta_a}(n(H_o,\eta_o))=B_{\eta_a}(n_o)$ 
as required.
\end{proof}

We can construct a $K_3$-family $\{n_4(t)\}$ such that, for any fixed $\mu\geq1$ and large~$t$, 
$B(t)\sim B_\mu(n_4(t))\sim B_{\eta(t)}(n_4(t))\sim\sqrt{2}\sqrt[4]{n_4(t)}$ as follows: 
$F=G=1$ and then $H=t(t+1),A=\eta=t^2+t+1, s=tA$, giving $p_4(t)=t^4+2t^3+2t^2+t+1$, 
$B(t)=2t^4+3t^3+3t^2+t+2$, $C(t)=2t^4+5t^3+6t^2+4t+3$, $q_4(t)=2t^6+5t^5+6t^4+4t^3+3t^2+2t+1$,
$r_4(t)=2t^6+7t^5+11t^4+10t^3+7t^2+3t+1$ and $n_4(t)=p_4(t)\cdot q_4(t)\cdot r_4(t)$. 
Then we have $B\sim 2t^4$ and $n\sim4t^{16}$, so $B\sim \sqrt{2}\sqrt[4]{n}\sim B_\mu(n)\sim B_\eta(n)$. 
Then $n_4(1)=6601$ and $n_4(2)=11972017$ as above, but we found no more $C_3N$'s up to $t=31$. 
Obviously (\ref{D29}) is satisfied identically by the parametric forms for $H,\eta$ and $s$ 
of $\{n_4(t)\}$, since $F=G=1$.

For $\eta=7$ and $(\phi_1,\theta_1)=(90,35)$, (\ref{D16}) gives $(\phi_2,\theta_2)=(23190,8765)$ 
which yields the $K_3N\quad n=2743993\cdot 9080853193\cdot 9095368033$ with $B=5483607$, $H=1656$ 
and $B_7(n)=5483607.001$ (calculator accuracy).

We note that for the sequence of $K_3N$'s associated with Theorem~\ref{Th4_3} and (\ref{D16}) for given $F\geq 3$, 
$B\sim \sqrt{2}\sqrt[4]{\frac{n}{F}}$.
I confidently conjecture from the above proof of Theorem~\ref{Th4_8} and from numerical evidence that $B<B_\eta(n)$ 
for every $K_3N$, but have not attempted a general proof.

Challenge 4: Find a $C_3N$ for which $n>6601$ and $B_7(n)-B<0.1$.

\medskip
To conclude \textsection\ref{Sec4} we shall show that the $K_3$- families deriving from (\ref{D18}) and (\ref{D19}),
with $E=2$ and $F=G=1$, not only give equality for the upper bound for $C$ given $p$, as described
in the discussion following Theorem \ref{Th4_4}, but in the same way give equality
for upper bounds for $C, BC, ABC$ and $n$ given $H$, and for $C$ and $ABC$ given $n$.
Using the notation of (\ref{D18}) and (\ref{D19}):

\begin{theorem}\label{Th4_9}For any $K_3N$, (a) $C\leq C(H)$, (b) $BC\leq B(H) \cdot C(H)$,
(c) $ABC \leq A(H) \cdot B(H) \cdot C(H)$ and (d) $n\leq N(H)$, with equality if{}f $E=2$ and $F=G=1$.
If there is equality then $H\not\equiv  2 \pmod 6$, and  if also \,$n$ is a $C_3N$, then $H \equiv 0 \pmod 6$.
\end{theorem}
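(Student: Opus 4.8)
The plan is to use the observation recorded just after Theorem~\ref{Th2_3}: a $K_3N$ is determined by $(E,F,G,H)$, since $A=(G+H)/F$, $p=AH+1$, $B=(p+E)/G$ (by~(\ref{B19})), $C=(p+D)/G$ (by~(\ref{B9b}) and~(\ref{B20}), as $CH=r'=p'(p+D)/\Delta=H(p+D)/G$), $q=BH+1$, $r=CH+1$, $n=pqr$, with $\Delta=AG$ and $D=(p^2+\Delta)/E$ (by~(\ref{B8})). Here $E\ge2$, $F\ge1$, $G\ge1$, $G<2H$ (Theorem~\ref{Th2_3}), and $H$ is even (else every $A_i=p_i{}'/H$ would be even, contradicting~(\ref{B3})). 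Following the convention of \S\ref{Sec2e} I would treat $C$, $BC$, $ABC$, $n$ as functions of the real variables $(E,F,G,H)$ and show that, for each fixed even $H\ge2$, each is maximised exactly at $(E,F,G)=(2,1,1)$, where the values are $C(H)$, $B(H)C(H)$, $A(H)B(H)C(H)$, $N(H)$ (a routine check against~(\ref{D18}) and~(\ref{D19})). Parts (a)--(d), with equality iff $E=2$ and $F=G=1$, follow at once.

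The maximisation I would carry out in three stages: decreasing $E$, then $F$, then handling $G$. \emph{Stage 1.} Keeping $F,G,H$ (hence $p,\Delta$) fixed and letting $E$ run down from its value for the given $K_3N$ to $2$: since $C=(p^2+Ep+\Delta)/(EG)$ is visibly decreasing in $E$, and since the $E$-dependent factors of $BC=\frac1{G^2}\bigl(\tfrac{p(p^2+\Delta)}{E}+Ep+2p^2+\Delta\bigr)$, of $ABC=A\cdot BC$, and of $n=\frac{p(H(p+E)+G)(H(p+D)+G)}{G^2}$ all have negative $E$-derivative throughout $2\le E\le p-1$ (because $E<p\le\sqrt{p^2+\Delta}$, using Theorem~\ref{Th2_2}), each quantity decreases; so we may take $E=2$. \emph{Stage 2.} With $E=2$, each of $A=(G+H)/F$, $p=H(G+H)/F+1$, $\Delta=G(G+H)/F$, and hence $B=(p+2)/G$, $C=(p^2+2p+\Delta)/(2G)$, $q=BH+1$, $r=CH+1$, is a positive quantity strictly decreasing in $F$; so $C$, $BC$, $ABC$, $n=pqr$ all strictly decrease in $F$, and we may take $F=1$. \emph{Stage 3.} Now $A=G+H$, $p=GH+H^2+1$, $\Delta=G(G+H)$, and each quantity is an explicit function of $G\in[1,2H)$. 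For (a) one finds $C=\tfrac12\bigl[(H^2+1)\bigl(G+\tfrac{H^2+3}{G}\bigr)+2H^3+5H\bigr]$; since $G+\tfrac{H^2+3}{G}\le H^2+4$ on $[1,H^2+3]$ with equality only at the endpoints, while $1\le G<2H\le H^2+3$ (as $(H-1)^2+2>0$), we get $C\le\tfrac12\bigl[(H^2+1)(H^2+4)+2H^3+5H\bigr]=C(H)$, equality iff $G=1$.

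For (b), (c), (d) in Stage~3 I would clear the denominator $G^2$ (and a further factor $2$ from $D$, for $n$) and rewrite the claim as $\Pi(G)\le0$, where $\Pi$ is a polynomial in $G$ (degree $2$, $3$, $4$ respectively) with positive leading coefficient and $\Pi(1)=0$; writing $\Pi(G)=(G-1)\Pi_1(G)$, it remains to show $\Pi_1(G)\le0$ for $1<G<2H$. Since $\Pi_1$ has degree $1$, $2$, $3$, this follows by checking that $\Pi_1$ is negative at $G=0$ and at $G=2H$ (with a turning-point estimate where $\Pi_1$ is not monotone on the interval); for $BC$ the endpoint bound reduces after simplification to the manifest inequality $-18H^4-33H^2-9<0$, so no small-$H$ cases arise, and for $ABC$ and $n$ the corresponding endpoint estimates are dominated by the leading terms once $H$ is moderately large, the few smallest $H$ being verified by substitution. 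This expansion of $\Pi_1$ and the verification of its sign on $[1,2H)$ is the main obstacle, though it is entirely elementary.

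Finally, the congruence conditions. Equality in any of (a)--(d) forces $E=2$, $F=G=1$, hence $A=H+1$, $B=H^2+H+3$, $p=H^2+H+1$, i.e.\ $n$ belongs to the family~(\ref{D18})--(\ref{D19}). For $n$ to be a $K_3N$, $A$ and $B$ must be coprime; but $H^2+H+3\equiv3\pmod{H+1}$, so $\gcd(A,B)=\gcd(H+1,3)$, which equals $3$ exactly when $H\equiv2\pmod3$. Since $H$ is necessarily even, this occurs exactly when $H\equiv2\pmod6$; hence equality forces $H\not\equiv2\pmod6$. If moreover $n$ is a $C_3N$ then $p=H^2+H+1\ (\ge7)$ is prime, so $3\nmid p$; since $p\equiv0\pmod3$ precisely when $H\equiv1\pmod3$, we also need $H\not\equiv1\pmod3$, and together with $H\not\equiv2\pmod3$ and $H$ even this gives $H\equiv0\pmod6$. (For such $H$ one checks $\gcd(A,C)=\gcd(B,C)=1$ and $3\nmid q,r$, so these cases do yield $K_3N$'s, resp.\ potential $C_3N$'s, consistent with the discussion after Theorem~\ref{Th4_4}.)
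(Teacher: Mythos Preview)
Your overall strategy --- express $C$, $BC$, $ABC$, $n$ as functions of the real parameters $(E,F,G,H)$ and show that each is maximised at $(E,F,G)=(2,1,1)$ --- is exactly the paper's approach. Your Stages~1 and~2 (monotonicity in $E$ and in $F$) are correct and match the paper's reasoning; the paper bundles the $F$-step into the single observation (4.30) that $F$ appears only in denominators, and handles $E$ via the $aE+b/E$ trick (Lemma~4B) rather than your direct derivative calculation, but these amount to the same thing. Your treatment of the congruence conditions at the end is correct and in fact slightly more explicit than the paper, which defers to the discussion following~(\ref{D18}).

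The substantive divergence is Stage~3, your $G$-step for (b), (c), (d). You propose to clear the denominator $G^2$, factor out $(G-1)$, and control the residual polynomial $\Pi_1$ by endpoint and turning-point estimates, with small $H$ handled by substitution. This is workable in principle, but your degree counts are off: $2G^2\,BC$ is cubic in $G$ (the leading term is $H(H^2+1)G^3$), so for (b), (c), (d) the polynomial $\Pi$ has degree $3,4,4$ rather than $2,3,4$, and $\Pi_1$ has degree $2,3,3$. Your endpoint-only argument for $BC$ happens to survive (an upward parabola negative at both endpoints is negative throughout the interval between them), but for $ABC$ you now face a cubic $\Pi_1$, not the quadratic you anticipated. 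The paper sidesteps all of this: it observes that each of $B$, $C$, and $AB$ individually has the shape $\alpha G+\beta/G+\gamma$ (after a harmless scaling), applies Lemma~4B to each to show it is maximised at $G=1$ over the whole range $[1,2H)$ for \emph{arbitrary} $(E,F)$, and then gets the product inequalities for free since all factors are positive. For (d) the paper exploits the factorisation $n'=ABCH(H^2+F)$, so that the $G$-step for $n'$ reduces to that already done for $ABC$. This eliminates both the polynomial expansion and the need for any small-$H$ verification, and is what you are missing in Stage~3.
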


\begin{proof}From (\ref{B18}), $A=\frac{H+G}{F}$, and then via (\ref{B19}), (\ref{B16}) and (\ref{B6})
we can express $B,C,n$ ( and obviously also $p,q,r$) in terms of $E,F,G,H$.
We now regard all other variables as functions of continuous real
variables $E,F,G,H$, subject to all the relations so far established for $K_3N$'s.
Let $Z$ be any of, or any product from, $A,B,C$ and $n'$: regarding $H$ as fixed,
we write $\phi_Z(E,F,G):=Z$ evaluated at $(E,F,G)$. From (\ref{B6}) and (\ref{B18}) we have
$n'=ABCH^3+BCH(G+H)$, and then we readily see that $F$ occurs only in the denominator of any $Z$, and thence
\begin{equation}\label{D30}\phi_Z(E,F,G)< \phi_Z(E,1,G)\quad\text{unless}\quad F=1
\end{equation}
\begin{Lemma4B}If $f(x):=ax+\dfrac{b}{x}$ with $a>0, b>0$, then for $0<x_1<\sqrt{\dfrac{b}{a}}$
we have $f(x_1)>f(x)$ if{}f $x_1<x<\dfrac{b}{ax_1}$.
\end{Lemma4B}

This follows at once from 
\begin{equation*}f(x_1)-f(x)=a(x_1-x)+b\Bigl(\frac{1}{x_1}-\frac{1}{x}\Bigr)=(x-x_1)\Bigl(\frac{b}{xx_1}-a\Bigr)
\end{equation*}

From (\ref{B16}, \ref{B18}, \ref{B19}) we have 
\begin{subequations}\label{D31}
\begin{multline}\label{D31a}C=\frac{ABH+A+B}{E}=\frac{1}{E}\Bigl\{\Bigl(\frac{H+G}{F}\Bigr)\Bigl(\frac{H^2+GH}{FG}
+\frac{E+1}{G}\Bigr)H\\+\frac{H+G}{F}+\Bigl(\frac{H^2+GH}{FG}+\frac{E+1}{G}\Bigr)\Bigr\}\ ,\qquad\text{i.e.}
\end{multline}
\begin{multline}\label{D31b}C=\phi_C(E,F,G)=\frac{1}{EF^2}\Bigl\{(H^2+F)G+\frac{H^2(H^2+2F)}{G}\Bigr\}
\\+\frac{2H^3}{EF^2}+\frac{H}{F}+\frac{3H}{EF}+\frac{1}{G}+\frac{1}{EG}+\frac{H^2}{FG}
\end{multline}
\end{subequations}
Obviously since $E\geq 2$, 
\begin{equation}\label{D32}\phi_C(E,F,G)<\phi_C(2,F,G)\quad\text{unless}\quad E=2\end{equation}
Similarly, the last three terms of (\ref{D31b}) increase as $G$ decreases.
Also, applying Lemma~4B with $x=G, f(G):=(H^2+F)G+\frac{H^2(H^2+2F)}{G}$ and $x_1=1$ since $G\geq 1$,
we have $f(1)>f(G)$ if{}f $1<G<\frac{H^2(H^2+2F)}{H^2+F}$. But from Theorem \ref{Th2_3}, $G<2H$,
and since $H\geq 2$ we have $1\leq G<2H\leq H^2<H^2(\frac{H^2+2F}{H^2+F})$, so $f(1)>f(G)$ unless $G=1$.
Hence from (\ref{D31b}),
\begin{equation}\label{D33}\phi_C(E,F,G)<\phi_C(E,F,1)\quad\text{unless}\quad G=1\end{equation}
Applying (\ref{D30}, \ref{D32}, \ref{D33}) in turn,
\begin{equation}\label{D34}C=\phi_C(E,F,G)<\phi_C(2,1,1)\quad\text{unless}\quad E=2\text{ and }F=G=1\end{equation}
Combining this with the discussion of (\ref{D18}) and (\ref{D19}), we have Theorem \ref{Th4_9}(a).

Also $B=\frac{H}{F}+\frac{H^2}{FG}+\frac{E+1}{G}$ (as in (\ref{D31a})), so $\phi_B(E,F,G)<\phi_B(E,F,1)$ unless $G=1$,
and hence with (\ref{D33})
\begin{equation}\label{D35}BC=\phi_{BC}(E,F,G)<\phi_{BC}(E,F,1)\quad\text{unless}\quad G=1\end{equation}

Now given $F, G$ and $H$, also determined are $A=\frac{H+G}{F}$ and $p=AH+1$.
Hence from (\ref{B19}) and (\ref{B16}) we have
\begin{multline*}\phi_{BC}(E,F,G)=\Bigl(\frac{p+E}{G}\Bigr)\Bigl(\frac{Bp+A}{E}\Bigr)=\frac{p+E}{G}\Bigl(\frac{p\,^2+pE+AG}{EG}\Bigr)
\\=\frac{p}{G^2}\Bigl\{E+\frac{p\,^2+AG}{E}\Bigr\}+\frac{2p\,^2+AG}{G^2}\end{multline*}
Applying Lemma~4B with $x=E$, $f(E)=E+\frac{p\,^2+AG}{E}$ and $x_1=2$ since $E\geq 2$,we have $f(2)>f(E)$
if{}f $2<E<\frac{p\,^2+AG}{2}$, which is true unless $E=2$, since $p\geq 3$ and by Theorem \ref{Th2_2}, $E\leq p-1$. Thus
\begin{equation}\label{D36}\phi_{BC}(E,F,G)<\phi_{BC}(2,F,G)\quad\text{unless}\quad E=2\end{equation}
Hence from (\ref{D30}, \ref{D35}, \ref{D36}), 
\begin{equation}\label{D37}BC=\phi_{BC}(E,F,G)<\phi_{BC}(2,1,1)\quad\text{unless}\quad E=2\text{ and } F=G=1,
\end{equation}
and Theorem \ref{Th4_9}(b) follows as for \ref{Th4_9}(a).

Also $A$ is independent of $E$, so from (\ref{D36}) 
\begin{equation}\label{D38}\phi_{ABC}(E,F,G)<\phi_{ABC}(2,F,G)\quad\text{unless}\quad E=2\end{equation}
\vskip-16pt
\begin{multline*}\text{Further, }AB=\phi_{AB}(E,F,G)=\frac{H+G}{F}\Bigl(\frac{H^2+GH+F(E+1)}{FG}\Bigr)
\\=\frac{H}{F^2}\Bigl(G+\frac{H^2+F(E+1)}{G}\Bigr)+\frac{2H^2+F(E+1)}{F^2},\end{multline*}
so applying Lemma~4B with $x=G$, $f(G)=G+\frac{H^2+F(E+1)}{G}$ and $x_1=1$ since $G\geq 1$,
we have $f(1)>f(G)$ if{}f $1<G<H^2+F(E+1)$, which is true unless $G=1$, since
$H\geq 2$ and from Theorem \ref{Th2_3}, $G<2H$. Hence
\begin{equation}\label{D39}\phi_{AB}(E,F,G)<\phi_{AB}(E,F,1)\quad\text{unless}\quad G=1.\end{equation}

Then from (\ref{D33}) and (\ref{D39}),
\begin{equation}\label{D40}\phi_{ABC}(E,F,G)<\phi_{ABC}(E,F,1)
\quad\text{unless}\quad G=1, \end{equation}
and from (\ref{D30}, \ref{D38}, \ref{D40})
\begin{equation}\label{D41}ABC=\phi_{ABC}(E,F,G)<\phi_{ABC}(2,1,1)\quad\text{unless}\quad E=2\text{ and } F=G=1
\end{equation}
and Theorem \ref{Th4_9}(c) follows as for \ref{Th4_9}(a).

Also from (\ref{D38}) and (\ref{D40}), $\phi_{ABC}(E,F,G)<\phi_{ABC}(2,F,1)$ unless $E=2, G=1$
and from (\ref{B6}) $n'=ABCH(H^2+F)$, so
\begin{equation*}\phi_{n'}(E,F,G)=\phi_{ABC}(E,F,G)H(H^2+F)<\phi_{ABC}(2,F,1)H(H^2+F)=\phi_{n'}(2,F,1)\end{equation*}
 unless $E=2, G=1$;
and from (\ref{D30}) $\phi_{n'}(2,F,1)<\phi_{n'}(2,1,1)$ unless $F=1$.
Thus $n'=\phi_{n'}(E,F,G)<\phi_{n'}(2,1,1)$ unless $E=2$ and $F=G=1$; Theorem~\ref{Th4_9} follows as for~\ref{Th4_9}(a).
\end{proof}

We now express our results for upper bounds for $C$ and $ABC$ given $n$ in terms of the functions of (\ref{D18})
and the inverse function $N^{-1}$ of (\ref{D19}); using the method of Theorem \ref{Th3_5},
we could also express our results as series of descending powers of $\sqrt[10]{n}$, but we simply
indicate the leading terms:
\begin{theorem}\label{Th4_10} For any $K_3N$, (a) $C\leq C(N^{-1}(n))$ and 
\newline (b) $ABC\leq A(N^{-1}(n)) \cdot B(N^{-1}(n)) \cdot C(N^{-1}(n))$
with equality as in Theorem~\ref{Th4_9}.
For large $n$, $C\leq 2^{-\frac{3}{5}}n^{\frac{2}{5}}(1+o(1))$, $ABC\leq 2^{-\frac{3}{10}}n^{\frac{7}{10}}(1+o(1))$.
\end{theorem}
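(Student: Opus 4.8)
The plan is to mirror the proof of Theorem~\ref{Th3_5}, with the $K_3$-families of (\ref{D18})--(\ref{D19}) (those with $E=2$, $F=G=1$) now playing the role that $\{n_2(s)\}$ played there: on that family $C=C(H)$, $ABC=A(H)B(H)C(H)$ and $n=N(H)$, so there $C=C(N^{-1}(n))$ and $ABC=A(N^{-1}(n))B(N^{-1}(n))C(N^{-1}(n))$ identically, which is why equality should hold exactly as in Theorem~\ref{Th4_9}. As in Theorems~\ref{Th3_5} and \ref{Th4_9}, I would treat $E,F,G,H$ as independent real variables subject to the relations already established --- recall from the remark after Theorem~\ref{Th2_3} that these four determine $A,p,B,q,C,r,n$, hence every $K_3N$ variable --- so that $C=C(E,F,G,H)$ and $n=n(E,F,G,H)$ become real functions. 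Given any $K_3N$, with associated values $(E_o,F_o,G_o,H_o)$, $n_o$, $C_o$, the idea is to travel from $(E_o,F_o,G_o)$ to $(2,1,1)$ along the level surface $n=n_o$, with $H$ slaved to this constraint, showing that $C$ (respectively $ABC$) never decreases; since the endpoint lies on the maximal family at $H=N^{-1}(n_o)$, this yields $C_o\le C(N^{-1}(n_o))$, and likewise for $ABC$. Note that $C\le C(H)$ and $n\le N(H)$ from Theorem~\ref{Th4_9} do \emph{not} by themselves give this: together they only produce $C\le C(H)$ for some $H\ge N^{-1}(n)$, which is weaker, so one really must travel along a level set of $n$.

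Concretely I would lower the parameters one at a time, say $F\to1$, then $G\to1$, then $E\to2$, at each stage holding $n=n_o$ and regarding $H$ as the function of the parameter being moved that is implicitly defined by $n(E,F,G,H)=n_o$. By Theorem~\ref{Th4_9}(d) --- that is, (\ref{D30}), (\ref{D38}), (\ref{D40}) applied to $n'$ --- lowering any one of $F$, $G$, $E$ with $H$ fixed raises $n$, so along each leg $H$ is forced downward; altogether $H$ runs from $H_o$ to $N^{-1}(n_o)$, and one verifies along the way, just as in Theorems~\ref{Th3_1} and \ref{Th3_5}, that $H\ge2$ and $\Delta<P$ (reality of $\theta$) persist. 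On each leg the monotonicity of $C$ (and of $ABC$) is then an implicit-differentiation computation in the spirit of (\ref{C7})--(\ref{C10}): one writes $C$ from (\ref{D31b}) and $n$ from (\ref{B6}) and (\ref{B18}), forms $dC/dF$ (then $dC/dG$, then $dC/dE$) along $n=n_o$, and checks its sign. What makes this plausible, and what I expect the exact computation to confirm term by term, is the leading-order behaviour for fixed $n$: $C\sim n^{2/5}E^{-3/5}F^{-2/5}G^{-1/5}$ and $ABC\sim n^{7/10}E^{-3/10}F^{-6/5}G^{-3/5}$, each strictly decreasing in $E$, in $F$ and in $G$, with minimum at $(E,F,G)=(2,1,1)$.

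I expect the three (really six) monotonicity estimates to be the main obstacle: as in Theorem~\ref{Th3_5} the signs are transparent asymptotically, but confirming them for all admissible $H$, down to the smallest values, calls for a heavy though routine manipulation of the exact rational expressions, and one will probably need a finite computer check for small $n$ --- those with $N^{-1}(n)<2$, where the real-variable path does not apply. The equality clause then follows from Theorem~\ref{Th4_9}: the chain of inequalities is tight exactly when $E=2$ and $F=G=1$, the congruence conditions on $H$ carrying over unchanged. Finally, for the asymptotics I would substitute the leading terms of the polynomials in (\ref{D18}) and (\ref{D19}) --- $C(H)\sim\frac{1}{2}H^{4}$, $A(H)B(H)C(H)\sim\frac{1}{2}H^{7}$, $N(H)\sim\frac{1}{2}H^{10}$, so $H\sim(2n)^{1/10}$ --- giving $C\le2^{-3/5}n^{2/5}(1+o(1))$ and $ABC\le2^{-3/10}n^{7/10}(1+o(1))$; alternatively one could run the power-series method of Theorem~\ref{Th3_5} to obtain the full expansions in descending powers of $\sqrt[10]{n}$.
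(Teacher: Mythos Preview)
Your level-set approach is in principle sound, but it is enormously more laborious than what the paper actually does, and the three monotonicity estimates you flag as ``the main obstacle'' are real obstacles that you have not attempted. The paper's proof avoids all of this by noticing two simple algebraic identities that turn the result into a two-line consequence of earlier theorems.

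For (a), the point is that $C=r'/H$. The numerator has a bound depending \emph{only on $n$}: Theorem~\ref{Th3_7} gives $r'\le\tfrac{1}{4}(\sqrt{8n+1}-3)$, with equality iff $E=2$. The denominator is bounded below by Theorem~\ref{Th4_9}(d), which inverts to $H\ge N^{-1}(n)$, with equality iff $(E,F,G)=(2,1,1)$. Hence $C\le\dfrac{\sqrt{8n+1}-3}{4N^{-1}(n)}$, and one checks (by evaluating both sides on the extremal family) that the right-hand side is exactly $C(N^{-1}(n))$. For (b), the same trick works with (\ref{B6}): $ABC=\dfrac{n'}{H(H^{2}+F)}$, the numerator is a function of $n$, and the denominator satisfies $H(H^{2}+F)\ge H(H^{2}+1)\ge N^{-1}(n)\bigl((N^{-1}(n))^{2}+1\bigr)$ since $F\ge1$, $H\ge N^{-1}(n)$ and $x\mapsto x(x^{2}+1)$ is increasing.

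You correctly observed that $C\le C(H)$ and $n\le N(H)$ alone are not enough, but the missing ingredient is not a level-set argument: it is the bound on $r$ in terms of $n$ (Theorem~\ref{Th3_7}) for part (a), and the exact formula $ABC=n'/\bigl(H(H^{2}+F)\bigr)$ for part (b). Your asymptotic computation at the end is the same as the paper's.
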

\begin{proof}
Suppose that all variables are real as in the proof of Theorem~\ref{Th4_9}, and that $(E,F,G)=(2,1,1)$;
then $n=N(H), H=N^{-1}(n)$, so $A=A(N^{-1}(n))$, etc. 
Also the first part of the proof of Theorem~\ref{Th3_6} applies,
with $i=d=3$ and $\lambda_d=E=2$, giving $r=\frac{\sqrt{8n+1}+1}{4}$. But $C=\frac{r'}{H}$, so we have
the functional relationship
\begin{equation}\label{D42}C(N^{-1}(n))=\frac{\sqrt{8n+1}-3}{4N^{-1}(n)}\end{equation}
Similarly from (\ref{B6}) we get
\begin{equation}\label{D43}A(N^{-1}(n)) \cdot B(N^{-1}(n)) \cdot C(N^{-1}(n))=\frac{n'}{N^{-1}(n)\{(N^{-1}(n))^2+1\}}
\end{equation}

Now suppose that $n$ is any $K_3N$ with its standard $A$ to $H$ integer set. Then (a) $C=\frac{r'}{H}$,
from Theorem~\ref{Th3_7} $r'\leq\frac{\sqrt{8n+1}-3}{4}$ with equality if{}f $E=2$ and from Theorem~\ref{Th4_9}
$H\geq N^{-1}(n)$ with equality if{}f
$(E,F,G)=(2,1,1)$, and hence
\newline $C\leq \frac{\sqrt{8n+1}-3}{4N^{-1}(n)}=C(N^{-1}(n))$ from~(\ref{D42}), with equality if{}f $(E,F,G)=(2,1,1)$.
Also (b) from~(\ref{B6})
\begin{equation*}ABC=\frac{n'}{H(H^2+F)}\leq \frac{n'}{N^{-1}(n)\{(N^{-1}(n))^2+1\}}=
A(N^{-1}(n)) \cdot B(N^{-1}(n)) \cdot C(N^{-1}(n))\end{equation*}
from~(\ref{D43}), with equality if{}f $(E,F,G)=(2,1,1)$. The first sentence of Theorem~\ref{Th4_10} follows.

Also for large $n$ and equality, from~(\ref{D18}) and~(\ref{D19}) we have $C\sim \frac{1}{2}H^4$,
\hbox{$ABC\sim \frac{1}{2}H^7$}
 and $n\sim \frac{1}{2}H^{10}$, so $H\sim (2n)^{\frac{1}{10}}$ and
$C\sim 2^{-\frac{3}{5}}n^{\frac{2}{5}}$, $ABC\sim 2^{-\frac{3}{10}}n^{\frac{7}{10}}$,
giving the final part of Theorem~\ref{Th4_10}.
\end{proof}

\section{The algorithm and its implementation}\label{Sec5}
\subsection{Acknowledgments}
\label{Sec5a}
This is the most convenient place to acknowledge my immense debt to my two friends who have carried out the
computer implementation of my algorithms: firstly Gordon Davies, like me a retired teacher of mathematics at
Haileybury College, England, who did the computing throughout the development stage,
programming in BASIC V with 32-bit arithmetic, and by August 1999 taking us on his RISC-PC, running RISC OS 3.7 with 16 Mb of RAM, to $C_3$ ($2\times 10^{18}$)~$=42720$
(where $C_3(X)$ is the number of three-prime Carmichael numbers up to $X$); and secondly Matthew Williams, a recent Haileybury student and Cambridge
University computer science graduate, who then got us up to $C_3(10^{24})$, using his 1 GHz Athlon with 900 Mb of RAM
and programming in C++ mostly with 64-bit arithmetic.

\subsection{Notation}
\label{Sec5b}
In this section we shall use the following upper bounds based on results in \S~\ref{Sec3} and \S~\ref{Sec4}. 
Each of these could be replaced with a slightly greater and simpler bound with only marginal loss 
of efficiency.
We seek all $C_3N$'s less than $X$, where for convenience $X$ is not a $C_3N$, so $n<X$.

(a) $p\leq p_M:=\left\lceil\sqrt[3]{X}-\dfrac{4\sqrt{3}}{9}\sqrt[6]{X}\right\rceil\qquad$
(Theorem~\ref{Th4_6})

(b) $A<A_M:=\sqrt{3p\,'}-\frac{1}{2}\sqrt[4]{\dfrac{p\,'}{12}}\qquad$ 
(Theorem~\ref{Th4_2})

(c) $B<B_M:=2p-\sqrt{p-\dfrac{3}{4}}+\dfrac{\sqrt{7}+1}{2\sqrt{7}}\qquad$ 
(Theorem~\ref{Th4_3})

(d) $q\leq Q_1:=P\,'\left(2P-\sqrt{P-\dfrac{3}{4}}+\dfrac{1}{2}\right)+1\qquad$ 
(Theorem~\ref{Th3_1})

(e) $q<Q_2:=\sqrt{\dfrac{X}{p}}-\sqrt{\dfrac{p}{12}}\qquad$ 
(Theorem~\ref{Th4_5})

(f) $Z:=\min(Q_1,Q_2)$

(g) For any $CN$ $q\leq Q_3:=\left\lfloor\sqrt[5]{2X^2}-\sqrt[10]{\dfrac{X^3}{64}}
-\dfrac{1}{10}\sqrt[5]{\dfrac{X}{4}}+\dfrac{17}{20}\sqrt[10]{\dfrac{X}{4}}\right\rfloor$ 
(Theorem~\ref{Th3_5})

\subsection{A brief description of four algorithms and the split range procedure}\label{Sec5c}
In~\cite{Pinch} Pinch describes the two algorithms which he used  to find all $CN$'s up to $10^{15}$.
We briefly describe these now, since we used them, slightly modified to take advantage of $d=3$,
to do selective checks on our results for large $X$.
In all these algorithms, for given $X$ the outermost loop runs through all odd primes up to~$p_M$.

In Pinch's first algorithm, which as modified by me for $d=3$ we call PI, for each $p$,
$E$ runs through the range $E_L(p,X)\leq E\leq p\,'$ (Theorem~\ref{Th2_2}), where $E_L(p,X)$ is
a fairly complicated function, not given here, which I formulated using $d=3$
(so $P=p<q$) to cut out some of the cases which would result in $n>X$ (for \hbox{$p<\sqrt[4]{3X}$}, $E_L(p\,,X)=2)$;
using $E_L(p,X)$ reduced the time for PI by about a third. For each $E$ a range of integer values of $D$ is found,
subject to $1\leq \Delta=DE-p\,^2\leq 2p\,'$ (Theorem~\ref{Th2_3}), and for each $(E,D)$ pair $q$ and $r$ are calculated
from~(\ref{B9}). $E$, and $D$ for each $E$ descend through their ranges, 
and if $r>\dfrac{X}{pq}$ next $E$ is taken;
else $q,r$ and $\lambda_1=\dfrac{qr-1}{p\,'}$ (\S\ref{Sec2a}) must be integers, 
with next $D$ at the failure of any test, and $q$ and $r$ are tested for primality.

For large $X$ the remaining algorithms are all significantly speeded up by the split-range procedure,
which we briefly describe. Suppose that variables $x$ and $y$ are connected by the bilinear relation
$axy+bx+cy+d=0$ with $a>0,\triangledown:=bc-ad>0$, and that $x_1>-\dfrac{c}{a}$, so over the interval
$x_1\leq x\leq x_2$, $y$ decreases as $x$ increases; and also that we wish to find integer pairs $(x,y)$ 
over this interval, and that a trial where we start with $x\in \mathbb{Z}$ (an $x$-trial) costs $k$ times
the cost of a $y$-trial. Then if $\dfrac{dy}{dx}=-k$ at $(\xi,\psi)$ we minimise the cost
by using $x$-trials for $x<\xi$ and $y$-trials for $y<\psi$ (so $x>\xi$). So if $x_1<\xi<x_2$
it pays to split the range at $(\xi,\psi)=\Bigl(\dfrac{\sqrt{\triangledown}-\sqrt{k}c}{\sqrt{k}a},
\dfrac{\sqrt{k\triangledown}-b}{a}\Bigr)$ (as may easily be shown), with $a$ saving,
compared with using only $x$-trials and ignoring the cost of deciding
whether to split, of approximately $\dfrac{ka(x_2-\xi)^2}{ax_2+c}$ $y$-trial costs.

Pinch's second algorithm as modified by me for $d=3$ (PII) for each $p$ runs through all primes $q$ satisfying $p<q\leq Z$.
For each $(p,q)$ pair it uses the Euclidean algorithm to find $H$ and hence $A=\frac{p\,'}{H}$, and if $A>A_M$ it takes next $q$.
Else it finds $L_1:=\dfrac{p\,'q\,'}{H}=\lcm[p\,',q\,'], R=pq,$ and $w$ such that $wR\equiv 1\pmod {L_1},$ 
by the reverse Euclidean algorithm; then, since $n=rR\equiv 1\pmod{L_1},$ we have $r=w+uL_1,$ and also $R\,'=Er'$ 
(\ref{B1}), so eliminating $r$ we seek integer pairs $(u,E)$ such that 
\begin{equation}\label{E1}L_1Eu+w'E-R'=0.\end{equation}
With $u$ ascending and $E$ descending we use the split range procedure, take next $q$ when $r>\dfrac{X}{pq},$
and for each integer pair $(u,E)$ we test $r$ for primality.

Our first successful algorithm (HI, originally devised when seeking Perrin pseudo\-primes, before we knew of
other algorithms) is the same as PII as far as finding $A<A_M$. It then found $r$ in essentially the same way as our main algorithm
HII, described next.

HII was motivated by the realisation that as $p$ becomes larger in HI many more pairs $(p,q)$ will result in $H$
small enough to give $A>A_M$; and that by first analysing $p\,'=AH$ such pairs need never be considered.
Since $H$ is even, $A$ divides $\dfrac{p\,'}{2}$ and, taking the most favourable case as an example, if $\dfrac{p\,'}{2}$ is prime
(and so is a Sophie Germain prime) then $A=1, H=p\,'$ is the only possibility and all possible values of $q$ 
will belong to the arithmetic progression (AP) $q\equiv p\pmod{p\,'},$ with $q\leq Z$. So for each $p$
we factorise $\frac{p\,'}{2}$ to find all possible pairs $(A,H)$ with $A<A_M$, and we organise a set of AP's which
will contain without repetition the resulting $q$-values, which we then test for primality. For each prime $q$ we find $B=\dfrac{q\,'}{H}$;
we find integer pairs $(C,F)$ from~(\ref{B5a}), which is bilinear in $C$ and $F$, determining the range of $F$
as described below in \S~\ref{Sec5d}, and test $r=CH+1$ for primality.

\subsection{The implementation of the HII algorithm in more detail}\label{Sec5d}
The basic idea of the HII algorithm is as stated above, and we now describe more fully a method of programming it,
drawing attention to certain worthwhile economies (our own method was slightly more complicated,
as we shall explain briefly in the next subsection). We structure our description in terms of several loops,
beginning with the outermost as Loop 1. To find all $C_3N$'s with $n<X$, we begin by preparing a bitmap prime database
up to at least $Q_3$ and we calculate $p_M$.

{\bf Loop 1:} for each $p$ satisfying $3\leq p\leq p_M$, we calculate $A_M, B_M$ and $Q_2$, and form an array
$\{A(j)\}$ of possible $A$ values. To do this we have $A\,|\,\dfrac{p\,'}{2}$, so suppose that the prime factorisation is
$\dfrac{p\,'}{2}=\prod_{k=1}^\mu\rho_k{}^{\eta_k},$ where $\eta_k\geq 1$.
We have $A(1)=1$, and for $j\geq 2$ we can obtain $A(j)$ by a process involving successive trial division of $\dfrac{p\,'}{2}$
by ascending primes, for each new $\rho_k$ multiplying all $A(j)$ already found by $\rho_k$ to form more possible $A$ values
(but taking care to avoid duplication when $\eta_k\geq 2$), testing for $A<A_M$ before adjoining $A$ to the array,
and storing the successive prime factors
$\{\rho\}:=\{\rho_1,\rho_2,\dotsc,\rho_\delta\}$ of $\{A(j)\}$ as
they arise (so $\delta\leq\mu$).

{\bf Loop 2:} for each $A(j)=A$, we now develop a set of AP's which must contain $q$ for any $C_3N$ associated
with $(p,A)$, with each AP having common difference $p\,'$, and we also obtain the corresponding set of AP's 
with common difference $A$ which contains the associated $B$ values.
We denote the $\lambda^{th}$ term of the $i^{th}$ AP for $q$ by $q_{\lambda}(i)$, 
so $q_{\lambda}(i)=q_1(i)+\lambda' p\,'$, 
and similarly for $B_\lambda(i)$. 
We have $H=\frac{p\,'}{A}$, and for use in Loop 3 we define $F_o=\left\lfloor\frac{H}{A}\right\rfloor$ 
and $\nu:=(F_o+1)A-H$. 
Clearly $B=\frac{q\,'}{H}<\frac{Q_2}{H}$ and $B<B_M$; also in loop 3 we shall show that $B<\frac{2p}{\nu}$, 
so an upper bound for $B$ is $\beta:=\min(B_M,\frac{Q_2}{H},\frac{2p}{\nu})$. 
Next we use the facts that $q$ is prime and $B$ is coprime to $A$ to restrict the number of cases 
to be considered. 
For $A>1$ we use a sieving method with those $\rho_k\in \{\rho\}$ which divide $A$ to find 
$\Phi(A):=\{t:1\leq t\leq A'$ and $\gcd[A,t]=1\}$ and we define $\Phi(1):=\{1\}$. 
Then for each $t$ we form $B_1=A+t$ and $q_1=B_1H+1=AH+tH+1$, and for $A\geq 3$, if 
$H\not\equiv0\pmod {\rho_k}\ \exists\ t\in\Phi(A)$ such that $t\equiv-\frac{1}{H}\pmod{\rho_k}$ 
and then $q_1\equiv0\pmod{\rho_k}$, so $q_\lambda=q_1+\lambda'p\,'=q_1+\lambda'AH\equiv0\pmod{\rho_k}$ 
and thus the AP $\{q_\lambda\}$ is entirely composite. 
Therefore if $\rho_k\mid  q_1$ we do not adjoin $B_1$ or $q_1$ to the arrays $\{B_1(i)\}$ or 
$\{q_1(i)\}$ (Gordon found that up to $X=10^{18}$ this eliminated from consideration about 19\% 
of the potential AP's). 
So we form the arrays $\{B_1(i)\}$ and $\{q_1(i)\}$ of first terms and we use the iterations 
$B_\lambda(i)=B_{\lambda-1}(i)+A$ and $q_\lambda(i)=q_{\lambda-1}(i)+p\,'$ to form arrays 
$\{B_\lambda(i)\}$ and $\{q_\lambda(i)\}$, having first examined each $B_{\lambda-1}(i)$ 
for associated $C_3N$'s as described in Loop 3 below. 
Having used a sieving method to find $\Phi(A)$, we get $B_\lambda(i)$ and $q_\lambda(i)$ 
increasing steadily throughout this process, and $B_\lambda(i)>\beta$ triggers next $A(j)$.

{\bf Loop 3:} We write $B_{\lambda-1}(i)=B$ and $q_{\lambda-1}(i)=q$, and if $q$ is composite 
(check against bitmap prime database), we take next $B$ and $q$. 
Writing $K:=AB, U:=KH+A+B=Bp+A$ and $V:=(A+B)H+1=p+q\,'$, (\ref{B5a}) may be written 
\begin{equation}\label{E2}
 KCF-VC-U=0
\end{equation}
and we use this bilinear relation in $C$ and $F$ to find integer pairs $(C,F)$ 
and hence possible $r=CH+1$.
Theorems~\ref{Th2_1} and \ref{Th4_4} suggest that we consider the $F$-range, and we examine 
certain related economies including a procedure for splitting the range.

Put $Y:=\frac{X}{pq}-1$ and $T:=B+1$. 
Then $r=\frac{n}{pq}<\frac{X}{pq}$, so $r'=CH<Y$, and using(\ref{E2}) we get 
$F>f_L:=\frac{HU+YV}{KY}$; and $C\geq T$ whence $F\leq f_M:=\frac{U+TV}{KT}$. 
So if $F_L:=\lceil f_L\rceil$ and $F_M:=\lfloor f_M\rfloor$, we need integer pairs $(C,F)$ 
such that $F_L\leq F\leq F_M$. 
We easily show that 
\begin{equation*}\begin{split}
f_L(B):=&f_L=\frac{H}{A}+\frac{p}{AB}+\frac{pH(Bp+A)(HB+1)}{AB(X-p-pHB)}\\
\text{and }f_M(B):=&f_M=\frac{H}{A}+\frac{p}{A}\Bigl(\frac{1}{B}+\frac{1}{B+1}\Bigr)+\frac{1}{B(B+1)} 
\end{split}
\end{equation*}

Clearly $f_L<f_M$, but we can get $F_M=\lfloor f_M\rfloor<f_L<f_M<\lceil f_L\rceil=F_L$, 
in which case there are no possible $F$ values. 
Further, as $B\uparrow$, both $f_L(B)\downarrow$ and $f_M(B)\downarrow$, and also 
$F_o=\lfloor\frac{H}{A}\rfloor\leq\frac{H}{A}<f_L\leq F_L$, so if $F_M=F_o$, then we take 
next $A(j)$ (at $X=10^{18}$ Matthew found that this $F_M=F_o$ trigger reduced the program 
time by~20\%). 
It is easily shown that 
\begin{equation*}
 f_M\Bigl(\frac{2p}{\nu}-1\Bigr)=F_o+1+\frac{\nu^2(p+A)}{2Ap(2p-\nu)}>F_o+1>F_o+1-\frac{\nu^2(p-A)}{2Ap(2p+\nu)}
=f_M\Bigl(\frac{2p}{\nu}\Bigr)
\end{equation*}
whence $F_M(\frac{2p}{\nu}):=\lfloor f_M(\frac{2p}{\nu})\rfloor=F_o$, justifying $B<\frac{2p}{\nu}$, 
anticipated in Loop 2.

A further though smaller economy can be achieved by eliminating from consider-ation some or all 
of those $B$ values for which $F_o+1<f_L<f_M<F_o+2$ and so $F_L>F_M$: if then $f_L(\beta)>F_o+1$, 
we can take next $A$; if not, it can be shown that if $\alpha:=\sqrt{\nu^2-\frac{4p^3H^2}{X}}$ 
then $f_L(\frac{2p}{\nu+\alpha})\bumpeq F_o+1$, so we can jump to $B\bumpeq \frac{2p}{\nu+\alpha}$ 
to find $f_L(B)$ just greater than $F_o+1$ and then continue (but this is awkward to program).

We next consider splitting the range (see \S\ref{Sec5c}) and a method of economising on $C$-trials 
which arranges them in an AP, first term $C_o$, say, and common difference $e\in \{1,2,3,6\}$. 
Consider the conditions (a) $2\mid AB$ and (b) $3\mid AB$ and $3\nmid H$. 
If only (a) holds, $C$ is odd and $e=2$. 
If only (b) holds, for $C\equiv -\frac{1}{H}\pmod 3$ 
we have $r=CH+1\equiv 0\pmod 3$, so $C\not\equiv 0\pmod 3$ and $C\not\equiv -\frac{1}{H}\pmod 3$,
leaving only one possible residue, and $e=3$. 
If both (a) and (b) hold, then $e=6$, and if neither, $e=1$. 
Then by eliminating as appropriate for each situation over the range $B+1\leq C\leq B+e$ 
if $2\mid C$, $3\mid C$ or $3\mid r$, we find $C_o$. 
With the notation of \S\ref{Sec5c} and with $x=C$, it seems reasonable to take $k=\frac{1}{e}$ 
and then $\xi=\sqrt{\frac{eU}{K}}\bumpeq\sqrt{eH}$.

So to execute the loop, as described above, we see whether $F_M$ and $F_L$ values permit us to 
take next $A$ (or possibly to jump some $B$'s); and then if $F_L>F_M$ we take next $B$. 
If $B<\sqrt{eH}$ we do $C$-trials until $C\geq\sqrt{eH}$, taking next $B$ if $F<F_L$ while 
$C<\sqrt{eH}$, and then $F$-trials; but if $B\geq\sqrt{eH}$ we do \hbox{$F$-trials} for $F_L\leq F\leq F_M$. 
For $C$-trials $F=\frac{F_T}{F_B}$ where $F_T:=U+VC$ and $F_B:=KC$, so with $V^*=eV$ and 
$K^*=eK$ we start with $C=C_o$ and then do \hbox{$F_T\longrightarrow F_T+V^*$} and 
\hbox{$F_B\longrightarrow F_B+K^*$} 
to find $F$ for next $C$; and similarly for $F$-trials with \hbox{$C=\frac{U}{KF-V}=\frac{U}{E}$} 
we do $E\longrightarrow E+K$ for unit increase in $F$.
Also if $F_L$ gives $E=1$, we take next $E$.

Each $(C,F)$ integer pair then gives $r=CH+1$ which we test for primality, using the standard 
algorithm if $r$ is beyond the bitmap prime data base.

\subsection{Some notes on our implementation of HII}\label{Sec5e}

(i) In the development stage, to test $q$ for primality we used a carefully designed but complicated system
of tracking through a prime database, exploiting the advance of the arrays$\{q_\lambda(i)\}$ by $p\,'$
for each unit increment in $\lambda$, and we also had much less RAM. For these reasons we constructed arrays 
$\{q_\lambda(i)\}$ for each $p$, rather than each $A$, which involved extra complications with
certain loop exits. But then Matthew found that primality testing for $q$
was taking at least 80\% of the time, and constructed the bitmap database,
which at $X=10^{18}$, for example, reduced the program running time
by a factor of at least 5, and was a major contribution to what we 
were able to achieve. Nevertheless we did not revise the array structure,
as we estimated this would have given only a marginal decrease in time.

(ii) In \S\ref{Sec5g} we shall give some running times , so we mention that the Loop 3
$F_M=F_o$ trigger was a late discovery, and right up to $C_3(10^{24})$ our implementation
only used the special case $F_M=0$, which Matthew's later trial showed gives about 
60\% of the 20\% time saving available at $X=10^{18}$.

\subsection{A faster algorithm?}\label{Sec5f}
If $(C^*, F^*)$ is an integer pair, it follows from the theory of PII outlined in~\S~\ref{Sec5c}
(or directly from~(\ref{E2})) that a necessary condition for $(C, F)$
to be an integer pair is $C=C^*+Ku$, and then (\ref{E1}) and~(\ref{E2}) give
\begin{equation}\label{E3}KuF+C^*F-Vu-C^*F^*=0,\end{equation}
an even more discriminating bilinear relation, between $F$ and $u$.
The total number of trials when the split range is used with(\ref{E2}) is approximately
\(2\sqrt{H}-T-\dfrac{H^2}{Y}\), so when this is very large (big $H$, very big $X$),
using~(\ref{E3}) might be worth the cost of finding $(C^*,F^*)$ --- either by the method of PII
for $w$, or simply using~(\ref{E2}) until (and if) such a $(C^*,F^*)$ is encountered.
We did not implement this.

\subsection{Comparison of algorithms for $d=3$}\label{Sec5g}

Ignoring time required for primality testing of $q$ in PI and HII (by
virtue of ``bitmap'') and of $r$ (relatively seldom required and the same for
all four programs), and based on the number of test pairs $(E,D), (p,q), (C,F)$
involved, I deduced that PI, PII and HI are all $O(X^{\frac{2}{3}+o(1)})$; in
the range $10^{12}\leq X\leq 10^{16}$ for all three programs when $X$ was multiplied by 10
the multiplier for the time was close to 4.325 and slowly increasing with $X$, giving
some support to this deduction since $10^{\frac{2}{3}}\bumpeq 4.64$. 
On the same basis I conjecture
but have not succeeded in establishing that HII is $O(X^{\frac{1}{2}+o(1)})$, and over the
range $10^{16}\leq X\leq 10^{18}$ the corresponding time multiplier was 3.013, slightly less than
$10^{\frac{1}{2}}\bumpeq 3.162$.

Here are a few of the many times recorded.
Illustrating the effects of improvements in computer technology, more powerful algorithms and 
the use of a compiled in place of an interpreted language,
Gordon first reached
$C_3(10^{12})=1000$ early in 1998 with an early version of HI on a mid-1980's computer in
about 45~hours; in November 2001 it took Matthew 0.19 seconds actual calculating
``user'' time (1.15 seconds total). Gordon on his new computer (see \S\ref{Sec5a}), with HII
and my $q$ primality testing method (see subsection~\ref{Sec5e}(i)) in mid 1999 took
about 32 seconds for $C_3(10^{12})$ and $35\frac{1}{2}$ hours for $C_3(10^{18})$; in mid 2000 $C_3(10^{18})$
took Matthew $10\frac{3}{4}$~minutes with a slightly improved prime testing method and
compiler optimisation, and finally with this method fully replaced by bitmap it took
just 2~minutes 7.59~seconds. In July 2002 $C_3(10^{24})$ took about 58 hours, with
about 9 minutes for the bitmap.

RAM and time constraints prevented us from going on to $X=10^{25}$.

\subsection{Checking and correction}\label{Sec5h}

Up to $X=10^{18}$, Gordon and I had Richard Pinch's paper \cite{Pinch} and
his Internet results to check against. We achieved agreement up to $10^{17}$,
but at $10^{18}$ we found that his list omitted $n^\dagger=835327 \cdot 893359 \cdot 1117117=833645090806507981$
(for more on $n^\dagger$, see discussion following Theorem~4.1).
Richard told me that $n^\dagger$ inexplicably failed to reach the Internet list,
although his program gave it. He also kindly put me in touch with Carl Pomerance,
who sent me the first preprint of~\cite{Granville} and invited Gordon Davies and
me to attempt the awkward evaluation of the constant $\kappa_3$ (see \cite{Chick1}). Some
months later when Carl asked us for any counts we had beyond $X=10^{18}$, Matthew
had got to $X=10^{20}$, but had not yet done any checks; it later emerged
that a problem in the program was by $X=10^{20}$ unfortunately causing omissions:
the value of 120459 for $C_3(10^{20})$ which we gave to Carl and is published in \cite{Granville}
should be 120625, and the number of imprimitive $C_3N$'s up to $10^{20}$ should be 89854.

Obviously comprehensive checking of HII results for large $X$ with other
known algorithms is not practicable. Soon after successfully programming PI,
Matthew used it for a complete check at $X=10^{19}$; this took about $62\frac{1}{2}$~hours,
checking $q$ for primality by the standard algorithm, and no discrepancy
was found. For final checking he used PII to find the $C_3N$'s corresponding
to every $k^{th}$ $p$-value for \hbox{$X=10^N$} for $(N,k)=$(19,2), (20,10), (21,30), (22,150), (23,1000)
and (24,1000), with initial $p$ values chosen to cut down
repetitions of the same check. No discrep-ancies were found. The last of these
checking runs, at $X=10^{24}$, took PII about 74~hours and HII about $3\frac{1}{2}$~minutes.

We are grateful to Harvey Dubner for collaboration which gave a further
partial check. Let $C_3^\dagger(X):=\#\{n: n$ is a $C_3N$ with $A=1$, and $n\leq X\}$. In~\cite{Dubner}
Dubner finds $C_3^\dagger(10^N)$ up to $N=20$, and suggests that for a ``wide range
of $N$'', $\dfrac{C_3^\dagger(10^N)}{C_3(10^N)}\bumpeq 0.644$. He uses an entirely different algorithm for $C_3^\dagger(X)$, based on relevant $(1,B,C)$ values. 
In correspondence he then took 
$C_3^\dagger(10^N)$ up to $N=24$, obtaining agreement with counts we have extracted
from our discs for $C_3(10^{23})$ and, later, $C_3(10^{24})$. In Table 1 of \S\ref{Sec6} we extend
up to $N=24$ Dubner's Table~2 for $(1,B,C)$ in~\cite{Dubner}.

When finding $C_3(X)$ for $X\geq 10^{18}$, we avoided the danger of rounding
errors wrongly including or excluding a $C_3N$ very close to $X$ by doing a run
to find $C_3(X^*)$ with $X^*=(1+\epsilon)X$ and examining individually any $C_3N$'s
in the range $(1\pm\epsilon)X$, where typically $\epsilon=10^{-3}\text{ or }10^{-4}$ (at $X=10^{24}$ Matthew took $\epsilon=0.1$).

\section{Statistics}\label{Sec6}

In Table~1 we tabulate for $X=10^N$, with $3\leq N\leq 24$, $C_3(X)$ and various other numbers
which we now define. In~\cite{Granville} Granville and Pomerance define {\em primitive} $CN$'s,
and for $C_3N$'s their definition implies that a $C_3N$ is primitive if{}f $H\leq ABC$;
$C_3^*(X):=\#\{n: n\text{ is a primitive }C_3N\text{ and }n\leq X\}$, and our data are consistent
with their conjecture that $\dfrac{C_3^*(X)}{C_3(X)}\rightarrow 0$ as $X\rightarrow \infty$.

Let $\mathcal C:=\{n: n=pqr\text{ is a }C_3N\text{ and }p\equiv q\equiv r\equiv -1 \pmod 4\}$; Rabin showed
in \cite{Rabin} that the probability of any odd composite $n$ passing the strong pseudoprime
test for a randomly chosen base $b$ is less than $\frac{1}{4}$, and that this bound is
approached most closely when $n\in \mathcal C$; and Pinch lists various other properties
of~$\mathcal C$ in~\cite{Pinch}; $\mathcal C(X):=\#\{n: n\in \mathcal C\text{ and }n\leq X\}$

In \S~8 of~\cite{Granville} Granville and Pomerance conjecture that
$C_3(X)\sim \tau_3\dfrac{X^{\frac{1}{3}}}{(\log X)^3}
\sim \dfrac{\tau_3}{27}\displaystyle\int^{X^{\frac{1}{3}}}_2\dfrac{dt}{(\log t)^3}$, where $\tau_3\bumpeq 2100$
 is a constant whose evaluation is discussed in~\cite{Chick1}; they define $\beta$ and $\gamma$ by
$C_3(X)=\beta\dfrac{X^{\frac{1}{3}}}{(\log X)^3}=\dfrac{\gamma}{27}\displaystyle\int^{X^{\frac{1}{3}}}_2\dfrac{dt}{(\log t)^3}$
 and predict that $\beta$ and $\gamma$ eventually
converge to $\tau_3$ from above and below respectively. Our new data are consistent with this, supporting their
cautious comment in \cite{Granville} (but see~\cite{Chick1}, Table~3 and comment).

$C_3^\dagger(X)$ is defined above in \S~\ref{Sec5h}.

\begin{table}[!hbp]
\caption{}\label{Table1}
\begin{tabular}{|rrrrrrrrr|}\hline
$N$ & $C_3(X)$ & $C^*_3(X)$ & $\dfrac{C^*_3(X)}{C_3(X)}$ & $\mathcal C(X)$ & $\beta$ & $\gamma $
& $C^\dagger_3(X)$ & $\dfrac{C^\dagger_3(X)}{C_3(X)}$  \\\hline
3       & 1     & 1     		& 1     & 0     & 32.96 & 9.092 & 1     & 1 	\\
4	& 7	& 7			& 1	& 1	& 253.9	& 53.13	& 6	&0.8571	\\
5	& 12	& 12			& 1	& 1	& 394.5	& 78.07	& 11	&0.9167	\\
6	& 23	& 19			& 0.826	& 1	& 606.5	& 128.1	& 18	&0.7826	\\
7	& 47	& 36			& 0.766	& 4	& 913.5	& 220.2	& 36	&0.7660	\\
8	& 84	& 59			& 0.702	& 8	& 1131	& 321.9	& 59	&0.7024	\\
9	& 172	& 113			& 0.657	& 15	& 1531	& 519.9	& 122 	&0.7093	\\
10	& 335	& 208			& 0.621	& 29	& 1898	& 761.3	& 227	&0.6776	\\
11	& 590	& 338			& 0.573	& 50	& 2065	& 961.5	& 403	&0.6831	\\
12	& 1000	& 529			& 0.529	& 79	& 2110	& 1113	& 680	&0.68\ \ \ \ 	\\
13	& 1858	& 930			& 0.501	& 153	& 2313	& 1349	& 1220	&0.6566	\\
14	& 3284	& 1550			& 0.472	& 271	& 2370	& 1496	& 2104	&0.6407	\\
15	& 6083	& 2621			& 0.431	& 487	& 2506	& 1680	& 3911	&0.6429	\\
16	& 10816	& 4201			& 0.388	& 868	& 2510	& 1763	& 6948	&0.6424	\\
17	& 19539	& 6814			& 0.349	& 1569	& 2525	& 1839	& 12599	&0.6448	\\
18	& 35586	& 11190			& 0.314	& 2837	& 2534	& 1899	& 22920	&0.6441	\\
19	& 65309	& 18432			& 0.282	& 5158	& 2538	& 1947	& 41997	&0.6431	\\
20	& 120625	& 30771		& 0.255	& 9443	& 2538	& 1984	& 77413	&0.6418	\\
21	& 224763	& 51432		& 0.229	& 17316	& 2541	& 2019	&144300	&0.6420	\\
22	& 420658	& 85921		& 0.204	& 32351	& 2538	& 2047	&270295	&0.6426	\\
23	& 790885	& 143620 	& 0.182	& 61130	& 2531	& 2067	&508780	&0.6433	\\
24	& 1494738	& 241562	& 0.162	&115606	& 2523	& 2081	&961392	&0.6432	\\\hline
\end{tabular}
\end{table}

\newpage
From Theorem~\ref{Th3_3} we have $n\leq N_3(P):=\frac{1}{2}(P^6+2P^5-P^4-P^3+2P^2-P).$

For any odd prime $p$ we define \(\chi(p\,):=\#\{n: n \text{ is a }C_3N \text{ and } n=p\,qr\}\)
and \(T(x):=\sum_{p\leq x}\chi(p\,)\).
Our list of $C_3N$'s up to $10^{24}$ enables us to find $\chi(p)$ and $T(p)$ up to $p=11213$,
since we have $N_3(11213)<10^{24}<N_3(11239)$. 
In Table~2 we tabulate $p\,, \chi(p\,)$ and $T(p\,)$ up to $p=211$, and in Table~3 $p, T(p\,)$ 
for $\pi(p\,)$ in intervals of 50 or 240 up to $\pi(11213)=1357$.

$\chi(p\,)=0$ for \(p= 11, 197, 1223, 1487, 4007, 4547, 7823, 9833, 9839\) and 10259,
and $\chi(p\,)=1$ for 51 values of $p$.
$\chi(211)=17$ is the greatest value of $\chi(p\,)$ until $p=1171$;
with $\chi(p)\geq 22$ we have \(\chi(p)=22\text{ for }p=1171, 7481, 8521, 8647\) and 10711,
\(\chi(9241)=24, \chi(10837)=25\text{ and } \chi(2221)=29.\)

At first in Tables~2 and~3 $T(p\,)$ keeps remarkably close to $p$ before going ahead for a bit,
but then $p$ gradually overhauls $T(p\,)$ and seems to be slowly pulling away.
Clearly $C_3(X)\leq T(p_M),$ and a plausible heuristic argument that
\(T(p_M) \leq O(X^{\frac{1}{3}+o(1)})\)
can be based on the loops of algorithm HII, ignoring the primality requirement
on $p, q, r$. The best upper bound for $C_3(X)$ which has so far been proved is
$O(X^{\frac{5}{14}+o(1)})$, by Balasubramanian and Nagaraj in~\cite{Balas}.

\begin{table}[!htp]\label{Table2}
\caption{Number and cumulative total of $C_3N$'s with first prime $p$}
\begin{tabular}{|l|rrrrrrrrrrrr|}\hline
$p$ 		& 3	& 5	& 7	& 11	& 13	& 17	& 19	& 23	& 29	& 31	& 37	& 41	\\
$\chi(p\,)$	& 1	& 3	& 6	& 0	& 5	& 2	& 2	& 1  	& 2	& 7	& 5	& 7	\\
$T(p\,)$ 	& 1	& 4	& 10 	& 10	& 15	& 17	& 19	& 20	& 22	& 29	& 34	& 41	 \\
\hline
$p$ 		& 43	& 47    & 53 	& 59	& 61	& 67	& 71	& 73	& 79	& 83	& 89	& 97	 \\
$\chi(p\,)$	& 11	& 3	& 3 	& 1	& 10	& 3	& 7	& 4	& 1	& 2	& 5	& 6	 \\
$T(p\,)$ 	& 52	&55 	& 58	& 59	& 69	& 72	& 79	& 83	& 84	& 86	& 91	& 97	 \\
\hline
$p$		& 101	& 103	& 107	& 109	& 113	& 127	& 131	& 137	&139   	& 149 	& 151	& 157	 \\
$\chi(p\,)$	& 2	& 5	& 3	& 10	& 5	& 5	& 11 	& 4   	& 6	& 2	& 9	& 11	 \\
$T(p\,)$	& 99	& 104	& 107	& 117	& 122	& 127	& 138 	& 142 	& 148	& 150	& 159	& 170	 \\
\hline
$p$		& 163	& 167	& 173	& 179	& 181  	& 191	& 193	& 197	& 199 	& 211	&	&	\\
$\chi(p\,)$	& 7	& 2	& 3	& 4	& 11  	& 6	& 10	& 0	& 7	& 17 	&	&	\\
$T(p\,)$	& 177	& 179	& 182	& 186	& 197 	& 203	& 213	& 213	& 220	& 237	&	&	\\
\hline
\end{tabular}
\end{table}

\begin{table}[!hbp]\label{Table3}
\caption{Cumulative total in Table 2 extended}
\begin{tabular}{|l|rrrrrrrr|rrrr|}\hline
$\pi(p\,)$	& 47	& 97	& 147	& 197	& 247	& 297	& 347	& 397	& 637	& 877	& 1117	& 1357 \\
$p$		& 211	& 509	& 853	& 1201	& 1567	& 1951	& 2341	& 2719	& 4723	& 6823	& 8999	& 11213	 \\
$T(p\,)$	& 237	& 565	& 896	& 1235	& 1556	& 1906	& 2299	& 2651	& 4347	& 6110	& 7945	& 9608 \\
\hline
\end{tabular}
\end{table}

From (\ref{D19}) and Theorem~\ref{Th4_9}(d) we have
\(n\leq N(H)=\frac{1}{2}(H^{10}+4H^9+14H^8+30H^7+53H^6+69H^5+71H^4+55H^3+31H^2+12H+2)\).

Since \(N(268)<10^{24}<N(270)\),
we can in a similar way use 
our list of $C_3N$'s up to $10^{24}$ to count all the $C_3N$'s for each $H$ up to $H=268$.
Let \linebreak
$\zeta(H):=\#\{n: n \text{ is a }C_3N \text{ with } \gcd[p\,',q\,']=H\}$
and \(Z(x):=\sum_{H\leq x}\zeta(H)\).
We find $\zeta(H)=0$ for $H=$ 68, 76, 160, 176, 188, 196 and 218,
$\zeta(H)=1$\linebreak for $H=$ 98, 104, 134, 164, 184, 202, 212, 232, 244 and 248;
and the largest values are $\zeta(210)=19$, $\zeta(H)=18$  
for $H= 30, 60, 102 \text{ and } 156$, $\zeta(150)=16$ and $\zeta(198)=13$.
Table~4 shows the growth of~$Z(H)$.
\begin{table}
\caption{Cumulative total of $C_3N$'s with $\gcd(p\,',q\,')\leq H$}
\begin{tabular}{|l|rrrrrrrrrrrrrr|}\hline
$H$    & 20 & 40 & 60  &  80 & 100 & 120 & 140 & 160 & 180 & 200 & 220 & 240 & 260 & 268 \\
$Z(H)$ & 44 & 98 & 162 & 204 & 263 & 334 & 390 & 450 & 491 & 531 & 578 & 646 & 687 & 705 \\
\hline
\end{tabular}
\end{table}

\newpage
Table~5 shows \(\#\{n: n \text{ is a } C_3N, n \equiv c \pmod m \text{ and } n\leq 10^N\}\)
for various $m$, $c$ and $N$.

\begin{table}[!hbp]\label{Table5}
\caption{Cumulative totals of $C_3N$'s up to $10^N$ satisfying $n \equiv c \pmod m$}
\begin{tabular}{|rrr|rrrrrrrrr|}\hline
	&	& N	& 7	& 9	& 11	& 13	& 15	& 17	& 19	& 21	& 23	\\
m	& c	&  	&	&	&	&	&	&	&	&	&	\\
\hline
5	& 1	&	& 35	& 133	& 457	& 1405	& 4611	& 14716	& 49030	&169157	&595168	\\
	& 2	&	& 1	& 6	& 40	& 133	& 455	& 1522	& 5151	& 17479	& 61711	\\
        & 3	&	& 5	& 11	& 41	& 138	& 434	& 1421	& 4726	& 16108	& 56953	\\
        & 4	&	& 3	& 19	& 49	& 179	& 580	& 1877	& 6399	& 22016	& 77050	\\
\hline
7	& 1	&	& 22	& 102	& 339	& 1078	& 3472	& 11029	& 36668	&125774	&443797	\\
        & 2	&	& 4	& 9	& 36	& 136	& 499	& 1660	& 5590	& 19280	& 68227	\\
        & 3	&	& 4	& 18	& 54	& 171	& 501	& 1636	& 5645	& 19551	& 68150	\\
        & 4 	&	& 3	& 12	& 55	& 162	& 544	& 1766	& 6057	& 20990	& 73529	\\
	& 5	&	& 4	& 10	& 46	& 133	& 494	& 1666	& 5379	& 18752	& 65616	\\
        & 6	&	& 4 	& 15	& 54	& 172	& 567	& 1776	& 5964	& 20410	& 71560	\\
\hline
11      & 1	&	& 13	& 48	& 183	& 591	& 2063	& 6678	& 22417	& 77368	&272654	\\
        & 2	&	& 3	& 18	& 54	& 161	& 471	& 1499	& 4965	& 17230	& 60546	\\
        & 3	&	& 2	& 13	& 43	& 134	& 432	& 1367	& 4729	& 16599	& 58510	\\
        & 4	&	& 4	& 13	& 40	& 142	& 421	& 1362	& 4598	& 15971	& 55563	\\
        & 5	&	& 4	& 13	& 47	& 151	& 435	& 1411	& 4756	& 16204	& 57647	\\
        & 6	&	& 6	& 15	& 49	& 151	& 478	& 1515	& 4944	& 16670	& 58038	\\
        & 7	&	& 2	& 10	& 34	& 127	& 443	& 1378	& 4711	& 16318	& 57489	\\
        & 8	&	& 5	& 14	& 46	& 130	& 455	& 1511	& 4869	& 16543	& 57988	\\
        & 9	&	& 4	& 12	& 48	& 115	& 420	& 1431	& 4748	& 16233	& 57508	\\
        & 10	&	& 3	& 15	& 45	& 155	& 464	& 1386	& 4571	& 15626	& 54941	\\
\hline
12      &  1	&	& 38	& 145	& 516	& 1632	& 5353	& 17221	& 57694	&199002	&700227	\\
        & 5	&	& 4	& 11	& 23	& 72	& 242	& 748	& 2456	& 8444	& 29527	\\
        & 7	&	& 4	& 15	& 50	& 153	& 478	& 1517	& 4994	& 16766	& 59215	\\
        & 11	&	& 0	& 0	& 0	& 0	& 9	& 52	& 164	& 550	& 1915	\\
\hline
\end{tabular}
\end{table}

In \cite{Pinch} Pinch describes and searches for certain special types of $CN$ discussed by
other authors, including {\em strong Fibonacci pseudoprimes} (of which he finds just one
up to $10^{18}$, with $d=8$). These special $CN$'s all have the property that $p\,_i+1$ divides
$n\pm 1$ for $1\leq i\leq d$, and for $d=3$ I have proved that no such numbers exist: see~\cite{Prob}.

\section{Acknowledgments}\label{Sec7}
I have already acknowledged the huge contributions of Gordon Davies and \break Matthew Williams.
Their collaboration has been essential and most rewarding, and I am \hbox{extremely} grateful.
As mentioned in \S~\ref{Sec5c}, HI was originally devised
to find $C_3N$'s which were also Perrin pseudoprimes in an earlier (unpublished)
investigation and it was Gordon who originally suggested using it to pursue $C_3(X)$.
I am also deeply indebted to my friend Ian Williams (Haileybury physics teacher and \hbox{father} of Matthew)
for extracting from discs listing $C_3N$'s up to $10^{23}$, and then $10^{24}$,
supplied by Matthew, the data for Tables~2, 3, 4,~5
and $\mathcal{C}(X) \text {and } C^\dagger_3(X)$ in Table~1; 
for doing the $K_3N$ or $C_3N$ computer checks and searches required for 
and associated with Theorems~\ref{Th3_1}, \ref{Th3_3} and \ref{Th4_8}; 
and also for undertaking the massive task of converting my manuscript into $\mathcal{AMS}$-\LaTeX .
I also thank Richard Pinch and Carl Pomerance for their encouragement and the stimulation
of their work.

\bibliographystyle{amsplain}

\end{document}